\theoremstyle{plain}
\newtheorem{thm}{Theorem}[section]
\newtheorem{lem}[thm]{Lemma}
\newtheorem{cor}[thm]{Corollary}
\theoremstyle{remark}
\newtheorem{rem}[thm]{Remark}
\theoremstyle{definition}
\newtheorem{exam}[thm]{Example}
\newtheorem{nota}[thm]{Notation}
\newtheorem{dfn}[thm]{Definition}
\newcommand{\bbC}{\mathbb C}
\newcommand{\bbN}{\mathbb N}
\newcommand{\bbR}{\mathbb R}
\newcommand{\bbZ}{\mathbb Z}
\newcommand{\cC}{\mathcal C}
\newcommand{\cL}{\mathcal L}
\newcommand{\cW}{\mathcal W}
\newcommand{\bd}{\mathbf d}
\newcommand{\bs}{\mathbf s}
\newcommand{\bt}{\mathbf t}
\newcommand{\bx}{\mathbf x}
\newcommand{\by}{\mathbf y}
\newcommand{\rB}{ B}
\newcommand{\rE}{ E}
\newcommand{\rG}{ G}
\newcommand{\rH}{ H}
\newcommand{\rM}{ M}
\newcommand{\rS}{ S}
\newcommand{\rV}{ V}
\newcommand{\rZ}{ Z}
\DeclareMathOperator{\Forall}{\forall}
\DeclareMathOperator{\Ch}{Ch}
\DeclareMathOperator{\CS}{CS}
\DeclareMathOperator{\id}{id}
\DeclareMathOperator{\tr}{tr}
\DeclareMathOperator{\str}{str}
\DeclareMathOperator{\Span}{Span}
\DeclareMathOperator{\op}{op}
\DeclareMathOperator{\HS}{HS}
\DeclareMathOperator{\Hom}{Hom}
\DeclareMathOperator{\Ker}{Ker}
\DeclareMathOperator{\Rg}{Rg}
\DeclareMathOperator{\rk}{rk}
\DeclareMathOperator{\Pf}{Pf}
\DeclareMathOperator{\Ab}{Ab}
\begin{document}

\title[Non-commutative analytic torsion form]
{Non-commutative analytic torsion form on the transformation groupoid convolution algebra}

\author{Bing Kwan SO}
\author{GuangXiang SU}
\thanks{Jilin University, Chanchun, P. R. China. bkso@graduate.hku.hk (B.K. So) \\
Chern Institute of Mathematics and LPMC, Nankai University, Tianjin, P. R. China.
guangxiangsu@gmail.com (G. Su)}

\begin{abstract}
Given a fiber bundle $Z \to M \to B$ and a flat vector bundle $E \to M$ with a compatible action
of a discrete group $G$, 
and regarding $B / G$ as the non-commutative space corresponding to the crossed product algebra,
we construct an analytic torsion form as a non-commutative deRham differential form.
We show that our construction is well defined under the weaker assumption of positive Novikov-Shubin invariant.
We prove that this torsion form appears in a transgression formula,
from which a non-commutative Riamannian-Roch-Grothendieck index formula follows.
\end{abstract}

\maketitle

\section{Introduction}
The basic philosophy of non-commutative geometry is to regard some non commutative algebras
as (smooth, continuous, measurable) functions on a space,
and then extending geometric concepts like topological invariants to these algebras \cite{Connes;Book}.
One of such classes of topological invariants that has been particularly successfully generalized to 
``non-commutative spaces" is that of index theory (see \cite[Chapter 2]{Connes;Book} for an introduction).

In this paper, 
we turn to construct another important invariant, namely, the Bismut-Lott analytic torsion form,
for the non-commutative transformation groupoid convolution algebra.
Our approach is based on the non-commutative super-connection formalism of \cite{Lott;EtaleGpoid,Piazza;NonCommEta}, 
developed for local index theory.

Recall that the Bismut-Lott analytic torsion form was constructed as a higher analogue of the Ray-Singer torsion
\cite{Bismut;AnaTorsion}.
Let $Z \to M \xrightarrow{\pi} B $ be a fiber bundle with connected closed fibers 
$Z_x := \pi^{-1}(x)$, $x \in B$ and let
$E \to M$ be a complex vector bundle with flat connection $\nabla ^\rE$
and Hermitian metric $g^\rE$.
Fix a splitting $T \rM = \rV \oplus \rH$ into vertical and horizontal bundles.
Let $D _t$ be the rescaled Dirac operator. 
The Bismut-Lott analytic torsion form is defined as \cite[(3.118)]{Bismut;AnaTorsion}:
\begin{multline}
\label{BLDfn}
\int _0 ^\infty \left\{- F ^\wedge (t)
+ \frac{\chi'(Z;E)}{2}
+ \big(\frac{\dim (\rZ) \rk (\rE) \chi(Z)}{4} - \frac{\chi'(Z;E)}{2} \big)
(1 - 2 t) e ^{-t} \right\}\frac {d t}{t} \\
 \in \Gamma ^\infty (\wedge ^\bullet T ^* \rB),
\end{multline}
where
$$ F ^\wedge (t)
:= (2 \pi \sqrt{-1}) ^{- \frac{N_\Omega }{2}}
\str _\Psi (2 ^{-1} N (1 + 2 (D_t - D ' _t)^2) e ^{- (D _t + D ' _t) ^2}.$$
The Bismut-Lott analytic torsion form appears in a transgression formula,
hence a Riemannian-Roch-Grothendeick index formula follows.
This construction was extended to general foliations with Hausdorff holonomy gropoids 
by Heistech and Lazarov \cite{Heitsch;FoliationTorsion},
using Haefliger cohomology.

When the fiber of the bundle is some non-commutative space $\mathcal B$ 
(i.e. a smooth sub-algebra of some $C^*$-algebra),
Lott \cite{Lott;NonCommTorsion} defined the analytic torsion as
\begin{align}
\label{LottTorsion}
\int _0 ^\infty & \Big( \int _0 ^1 \str _\Psi ( N e ^{- D _t (r) ^2 } ) d r \\ \nonumber
+ \int _0 ^1 & r (1 - r) \int _0 ^1 \str _\Psi \big(N 
\big[D _t - D _t ', e ^{ - r' D _t (r) ^2 } (D _t - D_t ') 
e ^{ -(1 - r') D _t (r) ^2 } \big] \big) d r' d r \Big) \frac {d t}{t}.
\end{align}
Necessarily, our definition is formally the same as \eqref{LottTorsion}.
However, 
we instead regard the base space as some non-commutative space (the transformation groupoid convolution algebra).
Correspondingly, 
we replace the deRham complex (with coefficient) by the non-commutative deRham complex,
and we use the non-commutative Bismut super-connection and the trace defined \cite{Lott;EtaleGpoid}.
By some standard arguments, we obtain a transgression formula and a non-commutative Riemannian-Roch-Grothendieck index theorem.
Thus our work again verifies the power of the super-connection formalism,
as pointed out in \cite{Lott;EtaleGpoid} and \cite{Nistor;EtaleCycilcHomology}.

In order to adapt the standard construction, there is, however, a major technical difficulty we need to overcome --
the integral \eqref{LottTorsion} may not converge as $t \to \infty$.
In \cite{Lott;NonCommTorsion}, the author made a very strong additional assumption that the Laplacian has a spectral gap at $0$.
This assumption is obviously true for a compact fiber bundle, 
but usually false in the non-compact case.
At this point our technical approach differs from \cite{Lott;NonCommTorsion}. 
In \cite{Schick;NonCptTorsionEst}, Azzali, Goette and Schick proved
that the integrand defining the $L^2$ analytic torsion form,
as well as several other invariants related to the signature operator,
decays polynomially provided the Novikov-Shubin invariant is positive.
In \cite{So;CommTorsion}, we proved that its derivatives also satisfy similar estimates
(and as a corollary the $L^2$ analytic torsion form is smooth).
In this paper, 
we use similar arguments to prove that the non-commutative terms and their derivatives in the analogue of \eqref{LottTorsion} 
also decay polynomially under, 
the same condition that the Novikov-Shubin invariant being positive .
Therefore the non-commutative analytic torsion form is well defined and smooth.

The main theme of this paper is thus extending the technical results of 
\cite{So;CommTorsion} to the non-commutative case.
In Section 2, we review the main construction of \cite{So;CommTorsion}, 
namely the Sobolev type norms $\| \cdot \| _{\HS m}$ for kernels, 
and the operator norms. 
The main result is Corollary \ref{Main1}, which concerns the compatibility of the two norms.
In Section 3, we begin with reviewing the non-commutative differential forms 
and the Bismut super-connection \cite{Lott;EtaleGpoid, Lott;NonCommTorsion}.
Then we extend the norms constructed in \cite{So;CommTorsion} to the non-commutative case
(it is essentially $\ell ^2$ in the $d g _{(k)}$ components), 
and generalize Corollary \ref{Main1} to non-commutative forms.
In Section 4, we mainly follow Section 4 of \cite{Schick;NonCptTorsionEst} to compute the large time limit of the 
non-commutative heat kernel.
Here, a major difficulty is that the non-commutative Bisumt super-connection is {\it not} flat, 
unlike the commutative case, 
and which is a major assumption in \cite{Schick;NonCptTorsionEst}.
However, we discover that one can express the bracket involving the connection term of the Bismut super-connection 
as a product of bounded, fiber-wise operators.
Finally in Section 5, we write down the relevant character forms,
compute their short time limit (with rather standard techniques) and prove our transgression and index formulas.
In the last section, we give some more remarks and highlight some open problems.

\begin{nota}
Throughout the paper, given two real valued expressions $f_1 , f_2$ we will write
$$ f _1 \dot \leq f _2 $$
if there exists some constant $C \geq 0$ such that 
$ f _1 \leq C f _2 $.
\end{nota}

\section{Sobolev norms on the fibered product groupoid}
In this section, we review the construction of norms and Sobolev spaces in \cite{So;CommTorsion}.

\subsection{The geometric settings}
\label{Geom}
Let $\rZ \to \rM \xrightarrow{\pi} \rB $ be a fiber bundle with connected fibers $Z_x := \pi^{-1}(x)$, $x \in \rB$. 
We assume $\rB$ is compact, however, $\rM$ is, in general, non-compact.
Denote the vertical tangent bundle by $\rV := \Ker (d \pi ) \subset T \rM$.

We suppose that there is a finitely generated discrete group $\rG$ acting on $\rM$ from the right freely and properly discontinuously. 
We also assume that $\rG$ acts on $\rB$ such that the actions commute with $\pi$ and $\rM _0 := \rM / \rG $ is a compact manifold.
Since the submersion $\pi$ is $\rG$-invariant,
$\rM _0 $ is also foliated and denote such foliation by $\rV _0 $.
Fix a distribution $\rH _0 \subset T \rM _0 $ complementary to $\rV _0$.  
Fix a metric on $\rV_{0}$ and a $\rG$-invariant metric on $\rB$.
Then one obtains a Riemannian metric on $\rM_0$ as $g^{\rV_0} \oplus \pi^* g^\rB$ on $T \rM_0 = \rV_0 \oplus \rH_0$.

Since the projection from $\rM$ to $\rM _0 $ is a local diffeomorphism,
one gets a $\rG$-invariant splitting $T \rM = \rV \oplus \rH $.
Furthermore this local diffeomorphism induces $\rG$-invariant metrics on $\rV$ and $\rM$.
Denote by $P ^\rV , P ^\rH$ respectively the projections to $\rV $ and $\rH$.

Given any vector field $X \in \Gamma ^\infty (T \rB)$,
denote the horizontal lift of $X $ by $X ^\rH \in \Gamma ^\infty (\rH) \subset \Gamma ^\infty (T \rM )$.
By our construction, 
$$| X ^\rH | _{g ^\rM} (p) = | X | _{g ^\rB } (\pi (p)).$$
Denote by $\mu _x , \mu _\rB $ respectively the Reimannian measures on $\rZ _x $ and $\rB $.

\begin{dfn}
\label{BGV}
We will consider several connections on the tangent bundle.
Denote by $\nabla ^\rB , \nabla ^\rM$ respectively the Levi-Cevita connection on $\rB$ and $\rM$.
Define the connection $\nabla ^{\rM / \rB}$ on the vertical bundle $\rV \to \rM$ by \cite[p.322]{BGV;Book}
$$ \nabla ^{\rM / \rB } := P ^\rV \nabla ^\rM P ^\rV,$$
and define another connection $\nabla^{\oplus}$ on $T \rM = \pi ^* \nabla ^\rB \oplus \rV$ by \cite[Proposition 10.2]{BGV;Book}
$$\nabla ^\oplus := \pi ^* \nabla ^\rB + \nabla ^{\rM / \rB}.$$
We denote the curvature of $ \nabla ^{\rM / \rB } $ by $R ^{\rM / \rB}$.
We will also abuse notation to use the same symbol to denote the induced connection on the dual and exterior product bundles.
\end{dfn}

\begin{dfn}
Let $\rE \xrightarrow{\wp} \rM$ be a complex vector bundle.
We say that $\rE $ is a contravariant $\rG$-bundle if $\rG$ also acts on $\rE$ from the right,
such that for any $v \in \rE , g \in \rG $, $\wp (v g) = \wp (v) g \in \rM $,
and moreover $\rG$ acts as a linear map between the fibers.

The group $\rG$ then acts on sections of $\rE$ from the left by
$$ s \mapsto g ^* s, \quad (g ^* s ) (p) := s (p g ) g ^{-1} \in \wp ^{-1} (p), \quad \forall \; p \in \rM .$$
\end{dfn}

We assume that $\rE$ is endowed with a $\rG$-invariant metric $g _\rE$,
and a $\rG$-invariant connection $\nabla ^\rE$
(which is obviously possible if $\rE$ is the pullback of some bundle on $\rM _0$).
In particular, for any invariant section $s$ of $\rE$,
$| s |$ is an invariant function on $\rM$. Let $(\nabla^E)'$ be the adjoint connection of $\nabla^E$ with respect to $g_E$.

In the following, for any vector bundle $\rE$ we denote its dual bundle by $\rE'$.

Recall that the ``infinite dimensional bundle'' over $\rB$ in the sense of Bismut is a vector bundle with typical fiber
$\Gamma _c ^\infty (\rE |_{\rZ _x } ) $ (or other function spaces) over each $x \in \rB$.
We denote such Bismut bundle by $\rE _\flat$.
The space of smooth sections on $\rE _\flat$ is, as a vector space, $\Gamma ^\infty _c (\rE )$.
Each element $s \in \Gamma ^\infty _c (\rE ) $ is regarded as a map
$$ x \mapsto s |_{\rZ _x} \in \Gamma _c ^\infty (\rE |_{\rZ _x } ) , \quad \Forall x \in \rB .$$
In other words, one defines a section on $\rE _\flat $ to be smooth,
if the images of all $x \in \rB$ fit together to form an element in $\Gamma _c ^\infty (\rE )$.
In particular,
$ \Gamma ^\infty _c ((\rM \times \bbC ) _\flat ) = C ^\infty _c (\rM ),$
and one identifies $\Gamma ^\infty _c (T \rB \otimes (\rM \times \bbC ) _\flat ) $
with $\Gamma ^\infty _c (\rH )$ by
$X \otimes f \mapsto f X ^\rH $.

Now we recall the defintion of the Bismut super-connection in the commutative case.
To shorten notations we denote $\rE ^\bullet := \rE \otimes \wedge ^\bullet \rV'$.
\begin{dfn}
\label{BismutDfn}
The Bismut super-connection is an operator of the form
$$ D _\rB
:= d ^{\nabla ^{\rE }} _\rV + L ^{\rE ^\bullet _\flat } + \iota _\varTheta ,$$
where $d ^{\nabla ^{\rE }} _\rV$ is the fiber-wise DeRham differential,
and  $\iota _\varTheta $ is the contraction with the $\rV$-valued horizontal 2-form $\varTheta$ defined by
$$ \varTheta (X _1 ^\rH , X _2 ^\rH) := - P^ \rV [ X _1 ^\rH , X _2 ^\rH ] ,
\quad \Forall X _1 , X _2 \in \Gamma ^\infty (T \rB). $$
\end{dfn}

Here, we recall that the operator 
$D _\rB $ is just the DeRham differential operator \cite[Proposition 10.1]{BGV;Book}.
However, the grading and the identification 
$\wedge ^\bullet \rH' \otimes \wedge ^\bullet \rV' \otimes \rE \cong \wedge ^\bullet T ^* \rM \otimes \rE $, 
depends on the splitting.

On the Bismut bundle one has the standard metric on $\Gamma ^\infty _c (\rE _\flat ) $ given by
\begin{equation}
\label{BismutInner}
\langle s _1 , s _2 \rangle _{\rE _\flat } (x) 
:= \int _{\rZ _x} \langle s _1 (p) , s _2 (p) \rangle _\rE \mu _x (p) .
\end{equation}
The adjoint connection of
$\eth _\rB$ with respect to $\langle \cdot , \cdot \rangle _{\rE _\flat}$, 
which is defined by the relation
\begin{equation}
\label{CommAd}
d _\rB \langle s _1 , s _2 \rangle _{\rE _\flat }
= \langle D _\rB s _1 , s _2 \rangle _{\rE _\flat }
-\langle s _1 , D '_\rB s _2 \rangle _{\rE _\flat },
\end{equation}
is given by 
$$D _\rB' = (d ^{\nabla ^\rE} _\rV )^* + (L ^{\rE ^\bullet _\flat})' -{\varTheta } \wedge,$$
where $(L ^{\rE ^\bullet _\flat} )'$ is the adjoint connection of $L ^{\rE ^\bullet _\flat}$ .
See \cite[Proposition 3.7]{Bismut;AnaTorsion} and \cite{Lopez;FoliationHeat} for explicit formulas for 
$(L ^{\rE ^\bullet _\flat})'$.
Note that the degree $(0, -1)$ component $(d ^{\nabla ^\rE} _\rV )^* $ is the formal adjoint operator of 
$d ^{\nabla ^\rE} _\rV $ (we use the superscript ${}'$ to denote adjoint connections and ${}^*$ to denote adjoint operators).
Recall that $D _\rB '$ is also flat, i.e. $(D _\rB ') ^2 = 0$.

\subsection{Covariant derivatives}

In this section we recall some constructions of \cite[Section 2]{So;CommTorsion}.

From the connection $\nabla^E$, one defines an induced connection on the Bismut bundle $\rE _\flat$ (as a $C ^\infty (\rB)$ module) by
$$ \nabla ^{\rE _\flat } _X s := \nabla ^\rE _{X ^\rH } s ,
\quad \Forall s \in \Gamma ^\infty (\rE _\flat ) \cong \Gamma _{c}^\infty (\rE ).$$
Also, note that $[X ^\rH , Y ] $ is vertical for any vertical vector field $Y \in \Gamma ^\infty (\rV )$.
Therefore 
$$ \nabla ^{\rV _\flat} _X Y
:= [X ^\rH , Y ] , \quad \Forall Y \in \Gamma ^\infty (\rV _\flat ) \cong \Gamma ^\infty (\rV ) $$
naturally defines a connection.

\begin{dfn}
\label{DiffDfn1}
(cf. \cite[Definition 2.2]{So;CommTorsion})
The covariant derivative on $\rE _\flat $ is the map
$$\dot \nabla ^{\rE _\flat } : 
\Gamma ^\infty (\otimes ^\bullet T ^* \rB \bigotimes \otimes ^\bullet \rV' _\flat \bigotimes \rE _\flat )
\to \Gamma ^\infty (\otimes ^{\bullet + 1} T ^* \rB \bigotimes \otimes ^\bullet \rV' _\flat \bigotimes \rE _\flat ),$$
defined by
\begin{align*}
\big(\dot \nabla ^{\rE _\flat } s \big)(X _0 , X _1 , \cdots, X _k ; Y _1 , \cdots,Y _l )
:= \nabla ^{\rE _\flat } _{X _0} s ( X _1 , \cdots, X _k ; Y _1 , \cdots, Y _l ) \\
- \sum _{j=1} ^l s \big(X _1 , \cdots, X _k ; Y _1 , \cdots , \nabla ^{\rV _\flat} _{X _0 } Y _j , \cdots , Y _l \big)\\ 
- \sum _{i=1} ^k s\big(X _1 , \cdots , \nabla ^{\rB } _{X _0 } X _i , \cdots , X _k ; Y _1 , \cdots Y _l\big) ,
\end{align*}
for any $k, l \in \bbN , X _0 , \cdots, X _k \in \Gamma ^\infty (T \rB), Y _1 , \cdots, Y _l \in \Gamma ^\infty (\rV)$.
\end{dfn}

Clearly, taking covariant derivative can be iterated,
which we denote by $(\dot \nabla ^{\rE _\flat }) ^m $, \\ $ m = 1, 2, \cdots$.
Note that $(\dot \nabla ^{\rE _\flat }) ^m $ is a differential operator of order $m$.

Also, we define
$\dot \partial ^{\rV } :
\Gamma ^\infty (\otimes ^\bullet T ^* \rB \bigotimes \otimes ^\bullet \rV' _\flat \bigotimes \rE _\flat )
\to \Gamma ^\infty (\otimes ^\bullet T ^* \rB \bigotimes \otimes ^{\bullet + 1} \rV' _\flat \bigotimes \rE _\flat )$ by
\begin{align}
\label{VertDiff1}
\nonumber
\big(\dot \partial ^{\rV } s \big)(X _1 , \cdots, X _k ; Y _0 , Y _1 , \cdots, Y _l )
:=& \nabla ^\rE _{Y _0} s ( X _1 , \cdots, X _k ; Y _1 , \cdots, Y _l ) \\ 
- \sum _{j=1} ^l & s (X _1 , \cdots, X _k ; Y _1 , \cdots , P^\rV (\nabla ^{\rM } _{Y _0 } Y _j ) , \cdots , Y _l ) .
\end{align}
Note that the operators $\dot \nabla ^{\rE _\flat } $ and $ \dot \partial ^{\rV } $ are just respectively the
$(0, 1)$ and $(1, 0)$ parts of the usual covariant derivative operator.

Since $\rM$ is locally isometric to a compact space $\rM _0$,
it is a manifold with bounded geometry (see \cite[Appendix 1]{Shubin;BdGeom} for an introduction).
On any manifold with bounded geometry one constructs various standard Sobolev spaces \cite[Appendix 1 (1.3)]{Shubin;BdGeom}.
In particular we regard $(\dot \nabla ^{\rE _\flat})^i (\dot\partial^{V}) ^j s
\in \Gamma ^\infty ( \otimes ^i \rH' \bigotimes \otimes ^j \rV' \bigotimes \rE_{\flat} )$, 
and consider:
\begin{dfn}
\label{SobDfn}
For $s\in \Gamma^{\infty}_{c}(E)$, we define its $m$-th Sobolev norm by
\begin{equation}
\| s \| ^2 _m
:= \sum _{i+j \leq m} \int _{x \in \rB} \int _{y \in \rZ _x }
\left| (\dot \nabla ^{\rE _\flat})^i (\dot \partial ^{\rV} ) ^j s \right| ^2 (x, y)
\mu _x (y) \mu _\rB (x).
\end{equation}
Denote by $\cW ^m (\rE)$ be the Sobolev completion of $\Gamma ^\infty _c (\rE ) $ with respect to $\| \cdot \| _m $.
\end{dfn}

\begin{dfn}
We say that a differential operator $A$ is $C^{\infty}$-bounded if in normal coordinates, 
the coefficients and their derivatives are uniformly bounded. 
\end{dfn}

\begin{exam}
Any invariant connection $\nabla ^\rE$ is a $C ^\infty$-bounded differential operator,
because by $\rG $-invariance the Christoffel symbols of
$\nabla ^\rE$ and all their derivatives are uniformly bounded.
It follows that using normal coordinate charts and parallel transport with respect to $\nabla ^\rE$ as trivialization,
one sees that $\rE$ is a bundle with bounded geometry.
\end{exam}



\subsection{The fibered product}
\begin{dfn}
The fibered product of the submersion $\rM \to \rB$ is defined to be the manifold
$$\rM \times _\rB \rM := \{ (p, q) \in \rM \times \rM : \pi (p) = \pi (q) \} .$$
It is endowed with maps $\bs, \bt : \rM \times _\rB \rM \to \rM$ defined by
$$ \bs (p, q) := q, \quad \bt (p, q) := p .$$
The manifold $\rM \times _\rB \rM $ is a fiber bundle over $\rB $, with typical fiber $\rZ \times \rZ $.
One naturally has the splitting \cite[Section 2]{Heitsch;FoliationHeat}
$$T (\rM \times _\rB \rM) = \hat \rH \oplus \rV _\bt \oplus \rV _\bs ,$$
where
$$\rV _\bs := \Ker (d \bt) , \quad \rV _\bt := \Ker (d \bs ).$$
Denote by $P ^{\rV _\bs }, P ^{\rV _\bt}$ the projections onto $\rV _\bt $ and $\rV _\bs$.
\end{dfn}
Note that $\rV _\bs \cong \bs ^{*} \rV$ and $\rV _\bt \cong \bt ^{*} (\rV )$.
As in Section 1.1, we endow $\rM \times _\rB \rM $ with a metric by lifting the metrics on $\rH _{0}$ and $\rV_{0}$.
Then $\rM \times _\rB \rM $ is a manifold with bounded geometry.

\begin{nota}
With some abuse in notations,
we shall often write elements in $\rM \times _\rB \rM $ as triples $(x, y, z)$, where $x \in \rB , y , z \in \rZ _x $.
Using these notations $\bs (x, y, z) = (x, z), \bt (x, y, z) = (x, y) \in \rM $.
\end{nota}

Let $\rG$ act on $\rM \times _\rB \rM $ by the diagonal action
$$ (p, q) g := (p g , q g ). $$
Let $\rE \to \rM $ be a contravariant $\rG$-vector bundle and $\rE '$ be its dual.
We will consider
$$\hat \rE \to \rM \times _\rB \rM := \bt ^* \rE \otimes \bs ^* \rE ' .$$
Given a $\rG$-invariant connection $\nabla ^\rE $ on $\rE $, let
$$\nabla ^{\hat \rE } := \bt ^* \nabla ^{\rE } \otimes I_{s^*{E'}} + I_{t^*{E}} \otimes \bs ^* \nabla ^{\rE ' }$$
be the tensor of the pullback connections.

Similar to Definition \ref{DiffDfn1}, we define the covariant derivative operators on \\
$\Gamma ^\infty (\otimes ^\bullet T ^* \rB \bigotimes \otimes ^\bullet (\rV' _\bt)_\flat
\bigotimes \otimes ^\bullet (\rV' _\bs)_\flat \bigotimes \hat \rE _\flat )$.
\begin{dfn}
Define
\begin{align*}
\nonumber
\big(\dot \nabla ^{\hat \rE _\flat } \psi \big) 
(X _0 , X _1 , \cdots &,X _k ; Y _1 , \cdots, Y _l , Z _1 , \cdots ,Z _{l'}) \\
:=& \nabla ^{\hat \rE _\flat } _{X _0} \psi
( X _1 , \cdots ,X _k ; Y _1 , \cdots,Y _l , Z _1 , \cdots, Z _{l'}) \\ 
&- \sum _{1 \leq j \leq l} \psi (X _1 , \cdots, X _k ;
Y _1 , \cdots, \nabla ^{\rV _\flat} _{X _0 } Y _j , \cdots , Y _l , Z _1 , \cdots, Z _{l'}) \\ 
&- \sum _{1 \leq j \leq l'} \psi (X _1 , \cdots, X _k ;
Y _1 , \cdots, Y _l , Z _1 , \cdots, \nabla ^{\rV _\flat} _{X _0 } Z _j , \cdots, Z _{l'}) \\ 
&- \sum _{1 \leq i \leq k} \psi (X _1 , \cdots, \nabla ^{\rB } _{X _0 } X _i , \cdots, X _k ; Y _1 , \cdots Y _l
, Z _1 , \cdots, Z _{l'}), \\
\nonumber
\big(\dot \partial ^{\bs } \psi \big) (X _1 , \cdots, X _k &; Y _0 , Y _1 , \cdots, Y _l , Z _1 , \cdots, Z _{l'}) \\
:=& \nabla ^{\hat \rE} _{Y _0} \psi ( X _1 , \cdots, X _k ; Y _1 , \cdots ,Y _l , Z _1 , \cdots ,Z _{l'}) \\ 
&- \sum _{1 \leq j \leq l} \psi (X _1 , \cdots ,X _k ;
Y _1 , \cdots , P ^{\rV _\bs} (\nabla ^{ \rM} _{Y _0 } Y _j) , \cdots , Y _l , Z _1 , \cdots, Z _{l'}) \\ 
&- \sum _{1 \leq j \leq l'} \psi (X _1 , \cdots ,X _k ;
Y _1 , \cdots ,Y _l , Z _1 , \cdots , P ^{\rV _\bt} [Y _0 , Z _j] , \cdots , Z _{l'}), \\ 
\nonumber
\big(\dot \partial ^{\bt } \psi \big)(X _1 , \cdots, X _k &;  Y _1 , \cdots ,Y _l , Z _0 , Z _1 , \cdots, Z _{l'}) \\
:=& \nabla ^{\hat \rE} _{Y _0} \psi ( X _1 , \cdots, X _k ; Y _1 , \cdots ,Y _l , Z _0 , Z _1 , \cdots, Z _{l'})\\ 
&- \sum _{1 \leq j \leq l} \psi (X _1 , \cdots, X _k ;
Y _1 , \cdots , P ^{\rV _\bs}[Y _0 , Z _j] , \cdots , Y _l , Z _1 , \cdots, Z _{l'}) \\ 
&- \sum _{1 \leq j \leq l'} \psi (X _1 , \cdots ,X _k ;
Y _1 , \cdots ,Y _l , Z _1 , \cdots , P ^{\rV _\bt} (\nabla ^{\rM} _{Z _0 } Z _j) , \cdots , Z _{l'}).
\end{align*}
\end{dfn}
Note that $\dot \partial ^{\bs } $ and $\dot \partial ^{\bt }$ are essentially 
$\dot \partial ^{\rV }$ for the fiber bundle $\rM \times _\rB \rM \to \rM$ with $\bs $ (resp. $\bt$) as the projection. 

For any $(x, y, z) \in \rM \times _\rB \rM $,
let $\bd _x (y, z) $ be the Riemannian distance between $y, z \in \rZ _x$.
We regard $\bd $ as a continuous, non-negative function on $\rM \times_\rB \rM$.

\begin{dfn}
\label{NWX}
(See  \cite{NWX;GroupoidPdO}). As a vector space,
$$\Psi ^{- \infty } _\infty (\rM \times _\rB \rM , \rE ) :=
\left\{
\begin{array}{ll}
& \text{For any } m \in \bbN, \varepsilon > 0 , \exists C _m > 0  \\
\psi \in \Gamma ^\infty (\hat \rE ) : &  \text{such that } \Forall i+j+k \leq m, \\
& |(\dot \nabla ^{\hat \rE _\flat } )^i (\dot \partial ^{\rV _\bs} )^j (\dot \partial ^{\rV _\bt} )^k \psi |
\leq C _m e ^{- \varepsilon \bd}
\end{array}
\right\}.
$$
The convolution product structure on $\Psi ^{- \infty } _\infty (\rM \times _\rB \rM , \rE ) $ is defined by
$$ \psi _1 \star \psi _2 (x, y, z)
:= \int _{\rZ _x } \psi _1 (x, y, w) \psi _2 (x, w , z ) \mu _{x } (w) .$$
\end{dfn}

Now we introduce a Sobolev type norm on 
$\Psi ^{- \infty } _\infty (\rM \times _\rB \rM , \rE ) $.
Fix a non-negative function $\chi \in C ^\infty _c (\rM )$ such that
\begin{equation}
\label{Partition}
\sum _{g \in \rG} g ^* \chi = 1.
\end{equation}
We may further assume $\chi ^{\frac{1}{2}}$ is smooth.

\begin{dfn}
\label{NormDfn}
For any $g \in \rG$, $\psi \in \Psi ^{- \infty } _\infty (\rM \times _\rB \rM , \rE )$, define
\begin{align*}
\| \psi \| ^2 _{\HS m} (g)
:= \sum _{i+j+k \leq m} \int _\rB \int _{\rZ _x } \chi (x, z) \int _{\rZ _x } & \left|
(\dot \nabla ^{\hat \rE _\flat } )^i (\dot \partial ^\bs )^j (\dot \partial ^\bt)^k ((g ^{-1})^* \psi ) \right| ^2
(x, y, z) \\
& \mu _x (y) \mu _x (z) \mu _\rB (x).
\end{align*}
Denote by $\bar \Psi ^{- \infty } _m (\rM \times _\rB \rM , \rE ) $ the completion of 
$\Psi ^{- \infty } _\infty (\rM \times _\rB \rM , \rE ) $ with respect to $\| \cdot \| _{\HS m}$.
\end{dfn}

\begin{rem}
If $\psi $ is $\rG$-invariant, 
then Definition \ref{NormDfn} is constant and coincides with \cite[Definition 1.9]{So;CommTorsion}.
\end{rem}

\subsection{Fiber-wise operators}

\begin{dfn}
A fiber-wise operator is a linear operator $A : \Gamma ^\infty _c (\rE _\flat) \to \cW ^0 (\rE)$
such that for all $x \in \rB$,
and any sections $s _1, s _2 \in \Gamma ^\infty _c (\rE _\flat )$,
$$ (A s _1 )(x) = (A s _2) (x), $$
whenever $s _1 (x) = s _2 (x) $.

We say $A $ is smooth if
$A (\Gamma ^\infty _c (\rE )) \subseteq \Gamma ^\infty (\rE )$.
A smooth fiber-wise operator $A $ is said to be bounded of order $m$
if $A $ can be extended to a bounded map from $\cW ^m (\rE )$ to itself.

Denote the operator norm of $A : \cW ^m (\rE ) \to \cW ^m (\rE ) $ by $\| A \| _{\op m} $.
\end{dfn}

Note that
\begin{equation}
\| g^* A \| _{\op m} = \| A \| _{\op m}
\end{equation}
because $g ^*$ is an isometry.

\begin{exam}
\label{SmoothingExam}
An example of smooth fiber-wise operators is $\Psi ^{- \infty } _\infty (\rM \times _\rB \rM , \rE )$,
acting on $\cW ^m (\rE )$ by vector representation, i.e.
$$ \big( \varPsi s \big) (x, y) := \int _{\rZ _x} \psi (x, y, z ) s (x, z) \mu _x (z) .$$
\end{exam}

\begin{nota}
For the fiber-wise operator operator $A : \Gamma ^\infty _c (\rE _\flat) \to \cW ^0 (\rE)$ which is of the form given by Example
\ref{SmoothingExam}, we denote its kernel by $A (x, y, z)$.
We will write
$$ \| A \| _{\HS m} := \| A (x, y, z) \| _{\HS m} ,$$
provided $A (x, y, z) \in \bar \Psi ^{-\infty} _m (\rM \times _\rB \rM , \rE )$.
\end{nota}

Fix a local trivialization
$$ \bx _\alpha : \pi ^{-1} (\rB _\alpha ) \to \rB _\alpha \times \rZ , \quad
p \mapsto (\pi (p) , \varphi ^\alpha (p)),$$
where $\rB = \bigcup _{\alpha } \rB _\alpha $ is a finite open cover (since $B$ is compact), 
and $\varphi ^\alpha |_{\pi ^{-1} (x)} : \rZ _x \to \rZ $ is a diffeomorphism.
Such a trivialization induces a local trivialization of the fiber bundle $\rM \times _\rB \rM \xrightarrow{\bt} \rM $ by
$\rM = \bigcup \rM _{\alpha } , \rM _\alpha := \pi ^{-1} (\rB _\alpha ) $,
$$ \hat \bx _\alpha : \bt ^{-1} (\rM _\alpha ) \to \rM _\alpha \times \rZ , \quad
(p, q) \mapsto ( p , \varphi ^\alpha (q)).$$
On $\rM _\alpha \times \rZ$ the source and target maps are explicitly given by
\begin{equation}
\label{LocalGpoid}
\bs \circ (\hat \bx _\alpha) ^{-1} (p, z) = (\bx _\alpha ) ^{-1} (\pi (p) , z)
\text{ and } \bt \circ (\hat \bx _\alpha) ^{-1} (p, z) = p .
\end{equation}

For such trivialization, one has the natural splitting
$$ T (\rM _\alpha \times \rZ) = \rH ^\alpha \oplus \rV ^\alpha \oplus T \rZ ,$$
where $\rH ^\alpha $ and $ \rV ^\alpha $ are respectively $\rH $ and $\rV$ restricted to $\rM _\alpha \times \{ z \}$,
$z \in \rZ $.
It follows from (\ref{LocalGpoid}) that
$$\rV ^\alpha = d \hat \bx _\alpha (\rV _\bs), \quad T \rZ = d \hat \bx _\alpha (\rV _\bt) .$$
Given any vector field $X$ on $\rB$,
let $X ^\rH , X ^{\hat \rH}$ be respectively the lifts of $X$ to $\rH $ and $\hat \rH $.
Since $d \bt (X ^{\hat \rH} ) = d \bs (X ^{\hat \rH} ) = X ^\rH $,
it follows that
$$d \hat \bx _\alpha (X ^{\hat \rH }) = X ^{\rH ^\alpha } + d \varphi ^\alpha (X ^\rH ).$$
Note that $d \varphi ^\alpha (X ^\rH ) \in T \rZ \subseteq T (\rM _\alpha \times \rZ) $.

Corresponding to the splitting $T (\rM _\alpha \times \rZ) = \rH ^\alpha \oplus \rV ^\alpha \oplus T \rZ$,
one can define the covariant derivative operators.
Let $\nabla ^{T \rM _\alpha } $ be the Levi-Civita connection on $\rM_\alpha$
and $\nabla^{TZ}$ be the Levi-Civita connection on $\rZ$.
Define for any smooth section

Let $A $ be any smooth fiber-wise operator on $\Gamma ^\infty _c (\rE _\flat )$.
Then $A $ induces a fiber-wise operator $\hat A$ on $\Gamma ^\infty _c (\hat \rE _\flat)$ by
\begin{equation}
\label{FiberwiseOp}
\hat A (u \otimes \bs ^* e) := A ( u |_{ \rM _\alpha \times \{ z \}}) \otimes (\bs ^* e)
\end{equation}
on $\bt ^{-1} (\rM _\alpha ) \cong \rM _\alpha \times \rZ $,
for any sections
$e \in \Gamma ^\infty (\rE ') , u \in \Gamma ^\infty (\bt ^* \rE ) $
and $\psi = u \otimes \bs ^* e \in \Gamma ^{ \infty } _c (\hat \rE ) $.

Note that $\hat A$ is independent of trivialization since $A$ is fiber-wise,
and for any $\alpha , \beta $ and $z \in \rZ$,
the transition function $\bx _\beta \circ (\bx _\alpha ) ^{-1}$ maps the sub-manifold
$\rZ _x \times \{ z \} $ to $\rZ _x \times \{ \bx ^\beta _x \circ (\bx ^\alpha _x) ^{-1} (z) \}$
as the identity diffeomorphism.

For any smooth fiber-wise operator $A$ and $g \in \rG$, define 
$$ (g ^* A ) s := g ^* (A (g ^{-1}) ^* s)) .$$
It is easy to check that $g ^* A$ is still a smooth fiber-wise operators.
We will denote the corresponding operator induced on $\Gamma ^\infty _c (\hat \rE _\flat)$ by
$\widehat{g ^* A} $.

Define 
$$ \rS := \{ g \in \rG : \chi (g ^* \chi) \neq 0 \}.$$
Note that $\rS $ is finite since the $\rG$ action is proper.

With these preparations, we state the main result of this section,
which is a slight generalization of \cite[Theorem 2.16]{So;CommTorsion}:
\begin{thm}
\label{AEstCor1}
There exists a finite subset $\rS _1 \subset \rG$ 
such that for any smooth, bounded operator $A$, $\psi \in \Psi ^{- \infty} _\infty (\rM \times _\rB \rM ,E) $,
one has
$$ \| \hat A \psi \| _{\HS 1} (g)
\dot \leq \sum _{g _1 \in \rS _1} \big(\| A \| _{\op 1} 
+ \| A \| _{\op 0} \big) \| \psi \| _{\HS 1} ( g _1^{-1} g).$$
\end{thm}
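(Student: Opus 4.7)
The plan is to follow the blueprint of the commutative case \cite[Theorem 2.16]{So;CommTorsion}, refined to track the $g$-dependence inherent in Definition \ref{NormDfn}. The structural observation is that in the local trivialization $\bt^{-1}(\rM_\alpha) \cong \rM_\alpha \times \rZ$ of Section 2.4, the operator $\hat A$ acts only in the $\bt$-fibre direction (the $y$-variable in the coordinates $(x,y,z)$), with $(x,z)$ entering merely as parameters. Hence $\dot\partial^\bt$-derivatives pass through $\hat A$ and are controlled by $\|A\|_{\op 1}$, while $\dot\partial^\bs$ and $\dot\nabla^{\hat\rE_\flat}$ commute with $\hat A$ up to zeroth-order corrections that can be absorbed into $\|A\|_{\op 0}$.

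Concretely, I would expand $\|\hat A \psi\|_{\HS 1}^2(g)$ as the sum over $(i,j,k)$ with $i+j+k \leq 1$ and treat each piece separately. For the pieces with $i=j=0$ (pure $\bt$-derivatives), one applies the fibrewise estimate
\[
\int_{\rZ_x} \bigl| (\dot\partial^\bt)^k \hat A \psi(x,y,z) \bigr|^2 \mu_x(y)
\;\dot\leq\; \|A\|_{\op k}^2 \,\|\psi(x,\cdot,z)\|_{\cW^k(\rZ_x)}^2,
\quad k \in \{0,1\},
\]
using that the fibrewise operator norm of $A$ is bounded by $\|A\|_{\op k}$. For $(i,j,k)=(0,1,0)$, use that for $Y_0 \in \rV_\bs$ the connection $\nabla^{\hat\rE}_{Y_0}$ only affects the $\bs^*\rE'$ factor of $\hat\rE = \bt^*\rE \otimes \bs^*\rE'$, so $\hat A$ commutes with the leading term $\nabla^{\hat\rE}_{Y_0}$ in $\dot\partial^\bs$, leaving only the bounded $P^{\rV_\bs}$- and $P^{\rV_\bt}$-projection terms as corrections. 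For $(i,j,k)=(1,0,0)$, an analogous argument employing the decomposition $X^{\hat\rH} = X^{\rH^\alpha} + d\varphi^\alpha(X^\rH)$ reduces $[\dot\nabla^{\hat\rE_\flat}, \hat A]$ to zeroth-order terms.

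Combining these estimates, $\|\hat A\psi\|_{\HS 1}^2(g)$ is dominated by $(\|A\|_{\op 1} + \|A\|_{\op 0})^2$ times an integral involving $\chi(x,z)$ but with $y$ ranging over all of $\rZ_x$. To convert this to the stated sum over $g_1 \in \rS_1$, I would insert the partition of unity $1 = \sum_{g_1 \in \rG}(g_1^{-1})^*\chi(x, y)$ into the $y$-integration. Using $\rG$-invariance of the measures together with the diagonal $\rG$-action on $\rM \times_\rB \rM$, a change of variable $(x,y,z) \mapsto (xg_1^{-1}, yg_1^{-1}, zg_1^{-1})$ identifies each summand with a contribution to $\|\psi\|_{\HS 1}(g_1^{-1}g)$. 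Properness of the $\rG$-action together with compactness of $\operatorname{supp}(\chi)$ ensure that only finitely many $g_1$ interact non-trivially with the $z$-cutoff, producing the finite set $\rS_1$ which depends only on the geometric data.

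The main technical obstacle is the commutator analysis of the second step: although $\hat A$ acts trivially in the $(x,z)$ directions, the covariant derivatives incorporate connection coefficients that depend on all three variables, and one must verify that the resulting correction terms are zeroth-order in $A$ with coefficients bounded uniformly --- guaranteed by the bounded geometry of $\rM$ --- so that they contribute to the $\|A\|_{\op 0}$ term rather than inflating the estimate. A secondary subtlety is justifying the partition-of-unity and change-of-variable step rigorously, so that the pieces reassemble into $\HS 1$-norms of $\psi$ at translated group elements; this uses the $\rG$-equivariance of $\dot\nabla^{\hat\rE_\flat}, \dot\partial^\bs, \dot\partial^\bt$ together with the identity $\|g^* A\|_{\op m} = \|A\|_{\op m}$.
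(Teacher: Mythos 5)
Your structural observation that $\hat A$ acts only in the $y$-variable (the $\rV_\bt$ direction) and hence commutes exactly with $\dot\partial^\bs$-derivatives is correct, and this matches the paper's argument for the $\dot\partial^\rZ$ piece. But your treatment of the horizontal derivative $\dot\nabla^{\hat\rE_\flat}$ is wrong, and this is a genuine gap.

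You claim that $[\dot\nabla^{\hat\rE_\flat},\hat A]$ reduces to zeroth-order terms, so that the $(i,j,k)=(1,0,0)$ piece is controlled by $\|A\|_{\op 0}$. This cannot be true. The operator $A$ is a fiberwise operator on $\rE_\flat\to\rB$, i.e., a \emph{family} $\{A_x\}_{x\in\rB}$ of operators on $\Gamma(\rE|_{\rZ_x})$, and nothing in the hypotheses makes this family covariantly constant. The horizontal covariant derivative of $\hat A\psi$ therefore involves differentiating the family $A_x$ along $\rB$, which is not bounded by $\|A\|_{\op 0}$. Your decomposition $X^{\hat\rH} = X^{\rH^\alpha} + d\varphi^\alpha(X^\rH)$ does split off a $T\rZ$-component that commutes with $\hat A$, but the remaining piece $\nabla^{\hat\rE}_{X^{\rH^\alpha}}$ is precisely the non-commuting part. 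This is why the theorem statement has a $\|A\|_{\op 1}$ term: since the $\cW^1$-norm in Definition \ref{SobDfn} includes the horizontal derivative $\dot\nabla^{\rE_\flat}$, the operator norm $\|A\|_{\op 1}$ controls $\|\dot\nabla^{\rE_\flat}(As)\|_{L^2}$ in terms of $\|s\|_{\cW^1}$. The paper's proof exploits this via the Leibniz rule applied to the tensor factorization $\psi^\alpha = \sum_r u^\lambda_r\otimes\bs^*e^\lambda_r$: the derivative either hits $A u^\lambda_r$ (giving $\|A\|_{\op 1}$) or the second factor $\bs^*(\nabla^\rE e^\lambda_r)$ (giving $\|A\|_{\op 0}$). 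The same mechanism is used for the $\dot\partial^\alpha$ ($=\dot\partial^\bt$) piece, so both the horizontal and the $\rV_\bt$ derivatives require $\|A\|_{\op 1}$.

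A secondary issue: your per-fiber estimate $\int_{\rZ_x}|(\dot\partial^\bt)^k\hat A\psi(x,y,z)|^2\mu_x(y) \dot\leq \|A\|_{\op k}^2\|\psi(x,\cdot,z)\|_{\cW^k(\rZ_x)}^2$ uniformly in $(x,z)$ is not immediate from the definitions. The quantity $\|A\|_{\op k}$ is the operator norm on $\cW^k(\rE)$, which is an $L^2$-average over $\rB$, and for $k\geq 1$ it mixes horizontal and vertical derivatives; it does not a priori control the per-fiber $H^k(\rZ_x)\to H^k(\rZ_x)$ operator norm. The paper sidesteps this by reordering the integration to put $z$ outermost, applying the global $\cW^1$-operator norm to the inner $(x,y)$-integral, and only then integrating in $z$ with the partition $\sum_{g_1}g_1^*\chi$. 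Your reorganization step at the end is in the right spirit, but it should be done before, not after, invoking the operator norm bound.
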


\begin{proof}
Fix a partition of unity $\{ \theta _\alpha \} \in C ^\infty _c (\rB)$ subordinate to $\{ \rB _\alpha \}$.
We still denote by $\{ \theta _\alpha \}$ its pullback to $\rM$ and $\rM \times _\rB \rM $.
Fix any Riemannian metric on $\rZ$ and denote the corresponding Riamannian measure by $\mu _\rZ$.
Then one writes
$$ (\bx _\alpha ) _* (\mu _x \mu _\rB ) = J _\alpha \mu _\rB \mu _\rZ ,$$
for some smooth positive function $J _\alpha $.
Moreover, over any compact subsets on $\rB _\alpha \times \rZ$, $ \frac{1}{J _\alpha} $ is bounded.

On $\rM _\alpha \times \rZ$, define differential operators as in \cite[Equations (9), (10), (11)]{So;CommTorsion}:
\begin{align}
\nonumber
\big(\dot \nabla ^{\alpha } \phi \big)
(X _0 , X _1 , \cdots, & X _k ; Y _1 , \cdots, Y _l , Z _1 , \cdots ,Z _{l'}) \\
:=& (\bx _\alpha ^* \nabla ^{\hat \rE _\flat }) _{X ^{\rH ^\alpha } _0}
\phi ( X _1 , \cdots ,X _k ; Y _1 , \cdots, Y _l , Z _1 , \cdots ,Z _{l'}) \\ \nonumber
&- \sum _{1 \leq j \leq l} \phi (X _1 , \cdots ,X _k ; Y _1 , \cdots , [X _0 ^{\rH ^\alpha} , Y _j] , \cdots , Y _l ,
, Z _1 , \cdots, Z _{l'}) \\ \nonumber
&- \sum _{1 \leq j \leq l'} \phi (X _1 , \cdots, X _k ;
Y _1 , \cdots , Y _l , Z _1 , \cdots , [X _0 ^{\rH ^\alpha} Z _j ], \cdots , Z _{l'}) \\ \nonumber
&- \sum _{1 \leq i \leq k} \phi (X _1 , \cdots , \nabla ^{\rB } _{X _0 } X _i , \cdots , X _k ;
Y _1 , \cdots ,Y _l , Z _1 , \cdots, Z _{l'}), \\ \nonumber
\big(\dot \partial ^{\alpha } \phi \big) (X _1 , \cdots, X _k ; & Y _0 , Y _1 , \cdots ,Y _l , Z _1 , \cdots ,Z _{l'}) \\
:=& (\bx _\alpha ^* \nabla ^{\hat \rE _\flat}) _{Y _0} \phi ( X _1 , \cdots, X _k ;
Y _1 , \cdots ,Y _l , Z _1 , \cdots, Z _{l'}) \\ \nonumber
&- \sum _{1 \leq j \leq l} \phi (X _1 , \cdots ,X _k ;
Y _1 , \cdots , P ^{\rV ^\alpha} (\nabla ^{\rM _\alpha}_{Y _0 } Y _j), \cdots, Y _l, Z _1 , \cdots ,Z _{l'})\\ 
\nonumber
&- \sum _{1 \leq j \leq l'} \phi (X _1 , \cdots ,X _k ;
Y _1 , \cdots ,Y _l , Z _1 , \cdots , P ^{T \rZ} [Y _0 , Z _j] , \cdots , Z _{l'}), \\ \nonumber
\big( \dot \partial ^{\rZ } \phi \big)(X _1 , \cdots , X _k ; & Y _1 , \cdots ,Y _l , Z _0 , Z _1 , \cdots, Z _{l'}) \\
:=& (\bx _\alpha ^* \nabla ^{\hat \rE _\flat} )_{Z _0} \phi ( X _1 , \cdots ,X _k ;
Y _1 , \cdots ,Y _l , Z _0 , Z _1 , \cdots, Z _{l'}) \\ \nonumber
&- \sum _{1 \leq j \leq l} \phi (X _1 , \cdots, X _k ;
Y _1 , \cdots , P ^{\rV ^\alpha} [Z _0 , Y _j] , \cdots , Y _l , Z _1 , \cdots ,Z _{l'}) \\ \nonumber
&- \sum _{1 \leq j \leq l'} \phi (X _1 , \cdots, X _k ;
Y _1 , \cdots ,Y _l , Z _1 , \cdots , \nabla ^{\rZ} _{Z _0 } Z _j , \cdots , Z _{l'}),
\end{align}
for any smooth section
$\phi \in \Gamma ^\infty (\otimes ^\bullet T ^* \rB \bigotimes \otimes ^\bullet (\rV ^\alpha)' _\flat
\bigotimes \otimes ^\bullet T ^* \rZ _\flat \bigotimes (\hat \bx _\alpha ^{-1} )^* \hat \rE _\flat )$.
 
Given any $\psi \in \Psi ^{- \infty} _\infty (\rM \times _\rB \rM), g \in \rG$,
let $\psi _g ^\alpha := \hat \bx _\alpha ^* (g ^* \psi ) $.
Since by definition
$$ g ^* (\hat A \psi ) = (\widehat{g ^* A} ) (g ^* \psi),$$
the theorem clearly follows from the inequalities
\begin{align}
\label{EstLem1}
\int _{\rB _\alpha} \int _{\rZ _x} \chi (x, z) \int _{\rZ _x} &
| \dot \nabla ^\alpha \widehat {g ^* A} (\theta _\alpha \psi _g ^\alpha ) | ^2
\mu _x (y) \mu _x (z) \mu _\rB (x) \\ \nonumber
\dot \leq & \sum _{g _1 \in \rS }
(\| g^* A \| ^2 _{\op 1} + \| g ^* A \| ^2 _{\op 0} ) \| \psi \| ^2 _{\HS 1} (g _1 ^{-1} g), \\
\label{EstLem2}
\int _{\rB _\alpha} \int _{\rZ _x} \chi (x, z) \int _{\rZ _x} &
| \dot \partial ^\alpha \widehat {g ^* A} (\theta _\alpha \psi _g ^\alpha ) | ^2 
\mu _x (y) \mu _x (z) \mu _\rB (x) \\ \nonumber
\dot \leq & \sum _{g _1 \in \rS }
(\| g^* A \| ^2 _{\op 1} + \| g ^* A \| ^2 _{\op 0} ) \| \psi \| ^2 _{\HS 1} (g _1 ^{-1} g), \\
\label{EstLem3}
\int _\rB \int _{\rZ _x} \chi (x, z) \int _{y \in \rZ _x} &
| \dot \partial ^\rZ \widehat {g ^* A} (\theta _\alpha \psi _g ^\alpha ) | ^2
\mu _x (y) \mu _x (z) \mu _\rB (x) \\ \nonumber
\dot \leq & \| g^* A \| ^2 _{\op 0} \| \psi \| ^2 _{\HS 1} (g) .
\end{align}

Let $\rZ = \bigcup _\lambda \rZ _\lambda $ be a locally finite cover.
Then the support of $\chi \theta _\alpha $ lies in some finite sub-cover.
Let $\chi _\alpha $ be the characteristic function
$$ \chi _\alpha (x, z) = 1 \text{ if } (\chi \theta _\alpha ) (x, z) > 0, \quad 0 \text{ otherwise.}$$
Without loss of generality we may assume $\rE' | _{\rZ _\lambda } $ are all trivial.
For each $\lambda $ fix an orthonormal basis $\{ e ^\lambda _r \}$ of $\rE ' |_{ \rB _\alpha \times \rZ _\lambda }$,
and write
$$\psi ^\alpha := \sum _r u ^\lambda _r \otimes \bs ^* e ^\lambda _r .$$
One directly computes (c.f. \cite[Lemma 2.9]{So;CommTorsion}): 
\begin{align*}
\Big| \dot \nabla ^\alpha (\hat A \theta _\alpha \psi ^\alpha ) & \Big| ^2 (x, y, z) \\
=& \Big| \sum _r (\dot \nabla ^{\rE _\flat} A \theta _\alpha (u ^\lambda _r |_{\rM _\alpha \times \{ z \}})
(x, y)) \otimes \bs ^* e ^\lambda _r
+ (A \theta _\alpha u ^\lambda _r )\otimes \bs ^* (\nabla ^\rE e ^\lambda _r ) \Big| ^2 \\
\dot \leq & \sum _r
\Big( \Big| \dot \nabla ^{\rE _\flat} A \theta _\alpha (u ^\lambda _r |_{\rM _\alpha \times \{ z \}})
(x, y) \Big| ^2
+ \Big| (A \theta _\alpha u ^\lambda _r ) \otimes \bs ^* (\nabla ^\rE e ^\lambda _r ) \Big|^2
\Big).
\end{align*}
Integrating and using the same arguments as the proof of \cite [Theorem 2.17]{So;CommTorsion}, one gets the estimate
\begin{align*}
\int _{\rB _\alpha} \int _{\rZ _x} \chi (x, z) \int _{\rZ _x} &
| \dot \nabla ^\alpha \widehat {g ^* A} (\theta _\alpha \psi _g ^\alpha ) |^2
\mu _x (y) \mu _x (z) \mu _\rB (x) \\
\dot \leq \sum _\lambda  \int _{\rZ _\lambda} \int _{\rB _\alpha} \int _{\rZ _x} &
\sum _r \Big( \big| \dot \nabla ^{\rE _\flat} (g ^* A) \theta _\alpha (u ^\lambda _r |_{\rM _\alpha \times \{ z \}})
(x, y) \big|^2 \\
&+ \big| ( (g^* A) \theta _\alpha u ^\lambda _r )\otimes \bs ^* (\nabla ^\rE e ^\lambda _r ) \big|^2 \Big)
\mu _x (y) \mu _\rB (x) \mu _\rZ (z) \\
\dot \leq \sum _\lambda \int _\rB \int _{\rZ _x} \chi _\alpha & \int _{\rZ _x}
(\| g^* A \| ^2 _{\op 1} + \| g^* A \| ^2 _{\op 0} )
\big( \big| \dot \nabla ^{\hat \rE _\flat} \bx _\alpha ^* ( \theta _\alpha \psi _g ^\alpha ) \big|^2 \\
+& \big| \dot \partial ^\bs \bx _\alpha ^* ( \theta _\alpha \psi _g ^\alpha ) \big| ^2
+ \big| \dot \partial ^\bt \bx _\alpha ^* ( \theta _\alpha \psi _g ^\alpha ) \big| ^2
+ \big| \bx _\alpha ^* ( \theta _\alpha \psi _g ^\alpha ) \big| ^2 \big)
\mu _x (y) \mu _x (z) \mu _\rB (x).
\end{align*}
Equation \eqref{EstLem3} hence follows from
\begin{align*}
\chi _\alpha \big( \big| \dot \nabla ^{\hat \rE _\flat} & \bx _\alpha ^* ( \theta _\alpha \psi _g ^\alpha ) \big| ^2
+ \big| \dot \partial ^\bs \bx _\alpha ^* ( \theta _\alpha \psi _g ^\alpha ) \big| ^2
+ \big| \dot \partial ^\bt \bx _\alpha ^* ( \theta _\alpha \psi _g ^\alpha ) \big| ^2
+ \big| \bx _\alpha ^* ( \theta _\alpha \psi _g ^\alpha ) \big| ^2 \big) \\
=& \sum _{g _1 \in \rS} \chi _\alpha g _1 ^* \chi 
\big( \big| \dot \nabla ^{\hat \rE _\flat} \bx _\alpha ^* ( \theta _\alpha \psi _g ^\alpha ) \big| ^2
+ \big| \dot \partial ^\bs \bx _\alpha ^* ( \theta _\alpha \psi _g ^\alpha ) \big| ^2
+ \big| \dot \partial ^\bt \bx _\alpha ^* ( \theta _\alpha \psi _g ^\alpha ) \big| ^2
+ \big| \bx _\alpha ^* ( \theta _\alpha \psi _g ^\alpha ) \big| ^2 \big) \\ 
\dot \leq & \sum _{g _1 \in \rS} g _1 ^* \chi \sum _{i + j + k \leq 1}
|(\dot \nabla ^{\hat \rE _\flat } )^i (\dot \partial ^{\rV _\bs} )^j (\dot \partial ^{\rV _\bt} )^k  g^* \psi |^2 .
\end{align*}
Using the same arguments with $\dot \partial ^\alpha $ in place of $\dot \nabla ^\alpha $,
one gets the Equation \eqref{EstLem2}.

As for the last inequality, since $\bt ^{* } \rE |_{\rM _\alpha \times \{ z \}} $
and the connection $(\bx ^{-1} _\alpha ) ^* \nabla ^{\bs ^{*} \rE } $ is trivial along $\exp t Z _0 $,
it follows that
$$ \dot \partial ^\rZ \widehat {g^* A} (\theta _\alpha \psi _g ^\alpha )
= \widehat {g ^* A} (\dot \partial ^\rZ (\theta _\alpha \psi _g ^\alpha )) ,$$
and from which Equation \eqref{EstLem3} follows.
\end{proof}

Repeating the arguments leading to Theorem \ref{AEstCor1} for higher derivatives, 
we obtain the analogue of \cite[Corollary 2.18]{So;CommTorsion}:
\begin{cor}
\label{Main1}
For each $m = 0, 1 , \cdots $, there exists a finite subset $\rS _m \subset \rG$ and constants $C _{m, l} \geq 0$,
such that for any smooth bounded $\rG$-inavariant fiber-wise operator $A $,
$$ \| \hat A \psi \| _{\HS m} (g)
\leq \sum _{g _1 \in \rS _m} 
\big( \sum _{0 \leq l \leq m  } C _{m, l} \| A \|_{\op l} \big) \| \psi \| _{\HS m} (g _1 ^{-1} g).$$
\end{cor}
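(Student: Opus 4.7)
The plan is to iterate the local-coordinate argument of Theorem \ref{AEstCor1} up to total order $m$, carefully tracking how derivatives distribute between the operator $A$ and the kernel $\psi$. First, I would reduce to local trivializations exactly as in the proof of Theorem \ref{AEstCor1}: use the cover $\{\rB_\alpha\}$, the subordinate partition of unity $\{\theta_\alpha\}$, the trivializations $\hat\bx_\alpha$, and local orthonormal frames $\{e_r^\lambda\}$ for $\rE'$ over $\rB_\alpha\times\rZ_\lambda$. Using $g^*(\hat A\psi) = \widehat{g^*A}(g^*\psi)$, it suffices to bound the $\HS m$-norm of $\widehat{g^*A}(\theta_\alpha\psi_g^\alpha)$ on each chart, where up to $m$ of the three types of covariant derivatives $\dot\nabla^\alpha$, $\dot\partial^\alpha$, $\dot\partial^\rZ$ are applied.

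The key structural observation, already exploited at the end of the proof of Theorem \ref{AEstCor1}, is that $\dot\partial^\rZ$ is tangential to the trivializing $\rZ$-fiber, along which $\bt^*\rE$ and the pullback of $\nabla^{\bs^*\rE}$ are constant; hence $\dot\partial^\rZ$ commutes with $\widehat{g^*A}$ and can be moved directly onto $\psi_g^\alpha$ at the cost of one factor of $\|A\|_{\op 0}$. For the remaining derivatives, an inductive Leibniz-type expansion on the number of $\dot\nabla^\alpha,\dot\partial^\alpha$ factors yields a finite sum of terms of the schematic form
$$\bigl(\dot\nabla^{\rE_\flat}\bigr)^{a}\bigl(\dot\partial^{\rV}\bigr)^{b}(g^*A)\bigl(\theta_\alpha u^\lambda_r|_{\rM_\alpha\times\{z\}}\bigr)\otimes\bs^*\bigl(\text{derivative of }e^\lambda_r\bigr),$$
with $a+b$ plus further derivatives on $\psi$ totaling at most $m$, together with commutator corrections arising from the Christoffel symbols in Definition \ref{DiffDfn1}. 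Each commutator correction is a $C^\infty$-bounded multiplier by bounded geometry, and each factor $(\dot\nabla^{\rE_\flat})^a(\dot\partial^\rV)^b(g^*A)$ defines a fiber-wise operator of order $a+b$ whose operator norm is controlled by $\|A\|_{\op(a+b)}$ up to $C^\infty$-bounded factors.

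Setting $l=a+b$ and applying the one-step estimate from the proof of Theorem \ref{AEstCor1} to each piece produces a sum over $0\le l\le m$ with coefficient $\|A\|_{\op l}$ multiplying a $\|\cdot\|_{\HS m}$-type integral of $\psi_g^\alpha$. Summing over $\alpha$ and rewriting $\chi_\alpha$ as $\sum_{g_1\in\rG}\chi_\alpha\, g_1^*\chi$, exactly as in the proof of Theorem \ref{AEstCor1}, yields the required finite set $\rS_m\subset\rG$; finiteness follows from properness of the $\rG$-action and compactness of the support of the $m$-fold derivatives of $\chi\theta_\alpha$, though $\rS_m$ may depend on $m$ because higher-order bounded-geometry factors can enlarge the effective support.

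The main obstacle is the combinatorial bookkeeping: at each iteration one must verify that the derivative acting on the composition $\widehat{g^*A}(\cdot)$ splits correctly into a term in which one further derivative hits $A$ (raising the required $\|A\|_{\op l}$ by one), a term in which the derivative hits the argument (and is then absorbed into $\|\psi\|_{\HS m}$), and $C^\infty$-bounded commutator corrections. This is entirely parallel to the iteration leading from \cite[Theorem 2.16]{So;CommTorsion} to \cite[Corollary 2.18]{So;CommTorsion}, adapted to incorporate the dual-frame factors $\bs^*e^\lambda_r$ that appear in the present bundle-valued setting; no new analytic ingredient beyond Theorem \ref{AEstCor1} is required.
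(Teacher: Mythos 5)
Your proposal is correct and matches the paper's approach, which in fact dispenses with the proof in one line: ``Repeating the arguments leading to Theorem \ref{AEstCor1} for higher derivatives.'' Your expansion of that one-line instruction—reducing to charts, exploiting the commutation of $\dot\partial^\rZ$ with $\widehat{g^*A}$, Leibniz-splitting the remaining derivatives so that derivatives landing on $A(\cdot)$ are absorbed by $\|A\|_{\op l}$ and the rest go into $\|\psi\|_{\HS m}$, and summing over charts via $\chi_\alpha = \sum_{g_1}\chi_\alpha\,g_1^*\chi$—is exactly what the paper intends. (One cosmetic caveat: the schematic factor $(\dot\nabla^{\rE_\flat})^a(\dot\partial^\rV)^b\circ(g^*A)$ is not itself a fiber-wise operator, since $\dot\nabla^{\rE_\flat}$ differentiates horizontally; what the estimate actually uses is $\|(\dot\nabla^{\rE_\flat})^a(\dot\partial^\rV)^b(Au)\|_0\le\|Au\|_{a+b}\le\|A\|_{\op(a+b)}\|u\|_{a+b}$, which is what you invoke anyway.)
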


\section{The non-commutative Bismut bundle over the transformation groupoid convolution algebra}
Let $\rB$ be a compact manifold without boundary, $\rG$ be a discrete group acting on $\rB$ from the right.
One defines the transformation groupoid $\rB \rtimes \rG \rightrightarrows \rB = \rB \times \rG$ with groupoid operations
\begin{align*}
\bs (x, g) := x g, \quad \bt (x, g) &:= x, \quad (x, g) ^{-1} := (x g , g ^{-1}), \\
(x _1, g _1)(x _2, g _2) &:= (x _1 , g _1 g _2), \text{ whenever } x _1 g _1 = x _2.
\end{align*}

\begin{dfn}
Write $\cC ^* _c (\rG ) := \Span _\bbC \{ g \} _{g \in \rG} $.
Define, as a vector space, 
$$\cC ^* _c (\rB \rtimes \rG ) := C ^\infty (\rB ) \otimes _\bbC C ^* _c (\rG),$$
where $\otimes $ here denotes algebraic tensor product.
Hence elements in $\cC ^* _c (\rB \rtimes \rG )$ can be written as a finite sum
$$ \sum _{g \in \rG} f ^g g , \quad f \in C ^\infty (\rB) , g \in \rG .$$
Equip $\cC ^* _c (\rB \rtimes \rG )$ with multiplication and involution:
\begin{align*}
f g \star f' g' &:= f (g ^* f' ) (g g' ) \\
(f g ) ^\sharp &:= (g ^* \bar f) g ^{-1}.
\end{align*}
\end{dfn}

\subsection{Non-commutative differential forms}
Following \cite{Lott;EtaleGpoid}, we enlarge $\cC ^* _c (\rB \rtimes \rG )$ and consider the algebra of forms.
\begin{dfn}
The universal differential algebra over $\cC ^* _c (\rG )$ is defined to be
$$ \Omega _c ^\bullet (\rG) := \bigoplus _{k = 0} ^\infty \Omega _c ^k (\rG), \quad \Omega _c ^k (\rG) 
:= \Span _\bbC \{ d g _1 \cdots d g _k g \} _{g _1, \cdots g _k \in \rG \setminus \{ e \}, g \in \rG}$$
with multiplication 
\begin{align*}
(d g _1 \cdots d g _k g) \star (d g' _1 \cdots d g' _{k'} g')
:=& d g _1 \cdots d g _k d (g g' _1) d g' _2 \cdots d g' _{k'} g' \\
&+ \sum _{1 \leq i \leq k'-1} (-1) ^i d g _1 \cdots d g _k d g d g' _1 \cdots d (g' _i g' _{i+1}) \cdots d g' _{k'} g' \\
&+ (-1) ^{k'} d g _1 \cdots d g _k d g d g' _1 \cdots d g' _{k'-1} (g' _{k'} g').
\end{align*}
\end{dfn}

\begin{nota}
To shorten notations, we denote $k$-tuples by $g _{(k)} := (g _1, \cdots , g _k) \in \rG ^k $, and write 
\begin{align*}
d g _{(k)} &:= d g _1 \cdots d g _k \in \Omega _c ^k (\rG) \\
g _{(k)} ^* &:= g _1 ^* \cdots g _k ^* .
\end{align*}
\end{nota}

\begin{dfn}
\label{NonCommForm}
The (compactly supported) non-commutative DeRham differential forms is the vector space
$$ \Omega _c ^\bullet (\rB \rtimes \rG) 
:= \Gamma ^\infty (\wedge ^\bullet T ^* _\bbC \rB) \otimes _\bbC \Omega _c ^\bullet (\rG),$$
equipped with multiplication and involution
\begin{align*}
(\omega d g _ {(k)} g) \star (\omega' d g' _{(k')} g')
:=& (-1) ^{k \deg \omega'}
\omega \wedge (g _{(k)} ^* g ^* \omega ') d g _{(k)} g d g'_{(k')} g' ,\\
(\omega d g _1 \cdots d g _k g) ^\sharp
:=& (-1) ^k g ^{-1} d g ^{-1} _k \cdots d g ^{-1} _1 \star (-1) ^{\frac{\deg \omega (\deg \omega + 1)}{2}} \bar \omega \\
=& (-1) ^{\frac{(\deg \omega + 2 k ) (\deg \omega + 1)}{2}}
\big((g ^*)^{-1 }(g _{(k)} ^*)^{-1} \bar \omega \big) g ^{-1} d g ^{-1} _k \cdots d g ^{-1} _1 .
\end{align*}
\end{dfn}
Let $d _\rB$ be the DeRham differential on $\rB$ and define 
$d : \Omega _c ^\bullet (\rG) \to \Omega _c ^{\bullet + 1} (\rG) $,
$$ d (d g _1 \cdots d g _k g ) := (-1)^k d g _1 \cdots d g _k d g .$$
Then it is easy to see that $d _\rB + d $ is a graded derivation on $\Omega _c ^\bullet (\rB \rtimes \rG) $ of degree 1.
Hence $\Omega _c ^\bullet (\rB \rtimes \rG) $ is a graded differential algebra.

We also need $\ell ^2$ and $\ell ^2$ versions of $\Omega _c ^\bullet (\rB \rtimes \rG) $.
Let $\| \cdot \| _{C ^m}$ be the $C ^m$ norm on $\Gamma ^\infty (\wedge ^\bullet T ^* \rB)$.
We may assume that for any differential forms,
$$ \| \omega _1 \wedge \omega _2 \| _{C ^m} \leq \| \omega _1 \| _{C ^m} \| \omega _2 \| _{C ^m}.$$
\begin{dfn}
For $m = 0, 1, \cdots$, define
\begin{align*} 
\Omega ^{k , l} _{\ell ^2 , m} (\rB \rtimes \rG )
:=& \Big\{ \sum _{ d g _{(k)} g } \omega ^{d g _{(k)} g} d g _{(k)} g 
: \omega ^{d g _{(k)} g} \in \Gamma ^m (\wedge ^l T ^* \rB ), 
\sum _{ d g _{(k)} g } \| \omega ^{d g _{(k)} g} \| ^2 _{C ^m} < \infty \Big\}, \\
\Omega ^\bullet _{\ell ^2 , m} (\rB \rtimes \rG )
:=& \bigoplus _{k, l \geq 0} \Omega ^{k , l} _{\ell ^2 , m} (\rB \rtimes \rG ) ,\\
\quad \Omega ^\bullet _{\ell ^2} (\rB \rtimes \rG ) 
:=& \bigcap _m \Omega ^\bullet _{\ell ^2 , m} (\rB \rtimes \rG ). 
\end{align*}
We endow $\Omega ^{k , l} _{\ell ^2 , m} (\rB \rtimes \rG )$ with the norm
$$ \Big\| \sum _{ d g _{(k)} g } \omega ^{d g _{(k)} g} d g _{(k)} g \Big\| ^2 _{C ^m}
:= \sum _{ d g _{(k)} g } \big\| \omega ^{d g _{(k)} g} \big\| ^2 _{C ^m} ;$$
$\Omega ^\bullet _{\ell ^2 , m} (\rB \rtimes \rG )$ 
with the topology induced by degree-wise convergence,
and $\Omega^{\bullet}_{\ell ^2}( \rB\rtimes \rG)$ with the natural inductive limit topology.
\end{dfn}

Since the DeRham differential $d _\rB : \Gamma ^m (\wedge ^\bullet T ^* \rB )
\to \Gamma ^{m + 1} (\wedge ^{\bullet + 1} T ^* \rB )$ is a bounded operator,
it extends to a bounded operator from 
$\Omega ^\bullet _{\ell ^2 , m} (\rB \rtimes \rG )$ to $\Omega ^\bullet _{\ell ^2 , m - 1} (\rB \rtimes \rG )$.
Hence $d _\rB$ is a well defined continuous map on $\Omega ^\bullet _{\ell ^2} (\rB \rtimes \rG ) $.

Let 
$$[\Omega _c ^\bullet (\rB \rtimes \rG), \Omega _c ^\bullet (\rB \rtimes \rG)] 
\subseteq \Omega _c ^\bullet (\rB \rtimes \rG) \subseteq \Omega _{\ell^2} ^\bullet (\rB \rtimes \rG)$$
be the subspace spanned by graded commutators and consider
$$ \Omega _{\ell ^2} ^\bullet (\rB \rtimes \rG ) _{\Ab}:=
\Omega _{\ell ^2} ^\bullet (\rB \rtimes \rG )  
\Big/ \overline{[\Omega _c ^\bullet (\rB \rtimes \rG), \Omega _c ^\bullet (\rB \rtimes \rG)]},$$
where the over-line denotes the closure. 
Observe that the bi-grading of $\Omega _{\ell ^2} ^\bullet (\rB \rtimes \rG )$ descends to
$\Omega _{\ell ^2} ^\bullet (\rB \rtimes \rG ) _{\Ab}$:
$$ \Omega _{\ell ^2} ^\bullet (\rB \rtimes \rG ) _{\Ab}
= \bigoplus _{k, l } \Omega _{\ell ^2} ^{k, l} (\rB \rtimes \rG ) \Big/ 
\overline{[\Omega _c ^\bullet (\rB \rtimes \rG), \Omega _c ^\bullet (\rB \rtimes \rG)]}.$$
It follows the derivation property that the differential $(d _\rB + d ) $ preserves 
$ [\Omega _c ^\bullet (\rB \rtimes \rG), \Omega _c ^\bullet (\rB \rtimes \rG)]$.
Therefore $d _\rB + d $ also descends to 
$ \Omega _{\ell ^2} ^\bullet (\rB \rtimes \rG ) _{\Ab}$ with total degree 1.

Following \cite{Lott;NonCommTorsion}, 
we also consider a further quotient of $\Omega _{\ell ^2} ^\bullet (\rB \rtimes \rG) _{\Ab}$.
\begin{dfn}
Define
$$ \widetilde \Omega _{\ell ^2} ^\bullet (\rB \rtimes \rG) _{\Ab}
:= \frac{\Omega _{\ell ^2} ^\bullet (\rB \rtimes \rG) _{\Ab} }
{\oplus _k \Ker \big(d _\rB: \Omega _{\ell ^2} ^{k, k} (\rB \rtimes \rG) _{\Ab}
\to \Omega _{\ell ^2} ^{k, k+1} (\rB \rtimes \rG) _{\Ab} \big)
\bigoplus \oplus _{k > l} \Omega _{\ell ^2} ^{k, l} (\rB \rtimes \rG) _{\Ab}}.$$
The differential $(d _\rB + d ) $ descends to $ \widetilde \Omega _{\ell ^2} ^\bullet (\rB \rtimes \rG ) _{\Ab}$.
\end{dfn}
Equivalently,
one may regard 
$$\widetilde \Omega _{\ell ^2} ^\bullet (\rB \rtimes \rG ) _{\Ab}
= \big( \oplus _k \Omega _{\ell ^2} ^{k, k} (\rB \rtimes \rG) _{\Ab} \big/ \Ker d \big)
\bigoplus \oplus _{k < l} \Omega _{\ell ^2} ^{k, l} (\rB \rtimes \rG) _{\Ab},$$
by defining the differential on the
$\oplus _k \Omega _{\ell ^2} ^{k, k} (\rB \rtimes \rG) _{\Ab} \big/ \Ker d $ part to be $d _\rB$.

We shall denote the cohomologies of $(\Omega _\infty ^\bullet (\rB \rtimes \rG ) _{\Ab} , d _\rB + d)$ and 
$ (\widetilde \Omega _{\ell ^2} ^\bullet (\rB \rtimes \rG ) _{\Ab}, d _\rB + d )$ by
\begin{equation}
\mathbf H^\bullet (\Omega _{\ell ^2} ^\bullet (\rB \rtimes \rG) _{\Ab})
\text { and }
\mathbf H^\bullet (\widetilde \Omega _{\ell ^2} ^\bullet (\rB \rtimes \rG) _{\Ab}) 
\end{equation}
respectively.

\begin{rem}
In this paper, we will construct the torsion form and prove the transgression formula in 
$\widetilde \Omega _{\ell ^2} ^\bullet (\rB \rtimes \rG) _{\Ab}$.
Note that in \cite{Lott;EtaleGpoid}, the authors consider the smooth subalgebra of super-exponential decay
(with respect to the length function defined by some generators),
and prove that the trace of the heat kernel lies in that space.
Thus their result is stronger than ours.
However we need to consider the $t \to \infty $ behavior of the heat kernel.
\end{rem}

\subsection{The vector representation}
Let $\rE \to \rB$ be a (possibly infinite dimensional) contravariant vector bundle.
\begin{dfn}
\label{VectorDfn}
The vector representation $\nu$ is the left action of 
$\cC ^* _c (\rB \rtimes \rG )$ on $\Gamma ^\infty (\rE )$ defined by
$$ \nu (f g) s := f (g ^* s ), 
\quad \Forall f g \in \cC ^\star _c (\rB \rtimes \rG ), s \in \Gamma ^\infty (\rE ).$$
\end{dfn}

The vector representation extends naturally to a left action of $\Omega _c ^\bullet (\rB \rtimes \rG)$ on 
$\Omega _c ^\bullet (\rB \rtimes \rG) \otimes _{\cC ^* _c (\rB \rtimes \rG)} \Gamma ^\infty (\rE)$.
Here, we write down the action explicitly.
Denote
$$ \Omega _e ^k (\rE \rtimes \rG ) 
:= \Span \{d g _1 \cdots d g _k \} _{g _1, \cdots g _k \in \rG \setminus \{ e \}}
\otimes _\bbC \Gamma ^\infty (\wedge ^\bullet T ^* \rB \otimes \rE ).$$
Observe that 
$$ \omega d g _1 \cdots d g _k g = (-1)^{k \deg \omega } d g _1 \cdots d g _k 
\star ((g _k ^{-1} \cdots g _1 ^{-1})^* \omega ) g .$$ 
Hence $\Omega _e ^k (\rE \rtimes \rG ) $ is isomorphic to 
$\Omega _c ^\bullet (\rB \rtimes \rG) \otimes _{\cC ^* _c (\rB \rtimes \rG)} \Gamma ^\infty (\rE)$.
Moreover the action is given by
\begin{align}
\label{Module}
\nonumber
\nu (\sum _{d g_{(k)} g} \omega ^{d g _{(k)} g} d g _{(k)} g ) 
\big( \sum _{d g' _{(k')}} d g' _{(k')} & \otimes u ^{d g' _{(k')}} \big) \\ \nonumber
= \sum _{d g_{(k)} g } \sum _{ d g' _{(k')}}
\Big( (-1) ^{(k+k') \deg \omega'} & \big( d g _{(k)} d (g g' _1) d g' _2 \cdots d g' _{k'} \\
+ \sum _{1 \leq i \leq k'-1} & (-1) ^i d g _{(k)} 
d g d g' _1 \cdots d (g' _i g' _{i+1}) \cdots d g' _{k'} \big) \\ \nonumber
&\otimes (\pi ^* (g _{(k)} ^* g ^* (g' _{(k')})^* ) ^{-1} \omega ^{d g _{(k)} g}) u ^{d g' _{(k')}}  \\ \nonumber
+ (-1) ^{(k+k') \deg \omega' + k'} & d g _{(k)} d g d g' _1 \cdots d g' _{k'-1} \\ \nonumber
&\otimes (\pi ^* ( g _{(k)} ^* g ^* ( g' _1 \cdots g' _{k'-1} )^* )^{-1} \omega ^{d g _{(k)} g}) 
g _{k'} ^* u ^{d g' _{(k')}} \Big).
\end{align}

We specialize to the case of the Bismut bundle $\rE _\flat \to \rB$.
We define an $\ell ^\infty$ version of $\Omega _e ^k (\rE _\flat \rtimes \rG )$:
\begin{dfn}
Define 
$$\Omega ^{k,l} _{\ell ^\infty , m} (\rE _\flat \rtimes \rG )
:= \Big\{ \sum _{d g _{(k)}} d g _{(k)} u ^{d g _{(k)}}
: u ^{d g _{(k)}} \in \cW ^m (\wedge ^l T ^* \rM \otimes \rE ),
\sup _{d g _{(k)}} \| u ^{d g _{(k)}} \| _{\cW _m} < \infty \Big\} ,$$
$$\Omega^{\bullet} _{\ell ^\infty} (\rE_{\flat} \rtimes \rG) 
:= \bigcap _{m=0} \Omega^{\bullet} _{\ell ^\infty, m} (\rE_{\flat}\rtimes \rG).$$
\end{dfn}

Clearly by extending the vector representation 
$\Omega^{\bullet} _{\ell ^\infty} (\rE_{\flat} \rtimes \rG) $ becomes a $\Omega _c ^\bullet (\rB \rtimes \rG ) $ module.

\subsection{$\Omega _c ^\bullet (\rB \rtimes \rG ) $-linear maps}
In this section, let $\rE _\flat \to \rB$ be the Bismut bundle,
induced from the fiber bundle $\rM \to \rB$ and vector bundle $\rE \to \rM$, 
with compatible $\rG$-action, 
as described in Section \ref{Geom}. 

\begin{dfn}
A $\bbC$-linear map $K : \Omega ^\bullet _e (\rE _\flat \rtimes \rG) 
\to \Omega ^\bullet _{\ell ^\infty} (\rE _\flat \rtimes \rG) $ is said to be 
$\Omega _c ^\bullet (\rB \rtimes \rG ) $-linear if for any $f \in C ^* _c (\rB \rtimes \rG)$,
$ s \in \Omega ^\bullet _e (\rE _\flat \rtimes \rG) $,
$$ \nu (f) (K s) = K (\nu (f) s).$$
\end{dfn}

We begin with writing down some necessary conditions for a $\Omega _c ^\bullet (\rB \rtimes \rG ) $-linear map $K $.
We may assume $K$ is of the form
\begin{equation}
\label{Hom0}
K s = \sum _{g _{(k)}} d g _{(k)} \otimes (g _{(k)} ^*) ^{-1} (K ^{g _{(k)}} s ),
\end{equation}
where $K ^{g _{(k)}}$ are $\bbC$-linear maps.
For the moment we regard $K$ and $K s$ as formal sums.
Then one has for any $f \in C ^\infty (\rB)$
$$ \nu (f e) (K s) 
= \sum _{g _{(k)}} d g _{(k)} \otimes ((g _{(k)} ^* )^{-1} \pi ^* f) \big((g _{(k)} ^*)^{-1} (K ^{g _{(k)}} s ) \big). $$
Therefore $\Omega _c ^\bullet (\rB \rtimes \rG ) $-linearity implies $K ^{g _{(k)}} $ are fiber-wise operators. 

Comparing $\nu (g' ) K s $ with $K (\nu (g') s) $ for arbitrary $g' \in \rG$, using Equation \eqref{Module},
one finds
\begin{align}
\label{Hom1}
\nonumber
\nu (g') (K s) =& d g' \sum _{g _{(k)}} \Big(  
(-1) ^k d g _1 \cdots d g _{k - 1} \otimes (g _1 \cdots g _{k-1}) ^{-1}) ^* K ^{g _{(k)}} s \\
&+ \sum _{i = 1} ^{k-1} (-1) ^i d g _1 \cdots d (g _{i-1} g _i) \cdots d g _k 
\otimes (g _{(k)} ^{-1} ) ^* K ^{g _{(k)}} s \Big) \\ \nonumber
&+ \sum _{g _{(k)}} d (g ' g _1) d g _2 \cdots d g _k \otimes (g _{(k)} ^*) ^{-1} K ^{g _{(k)}} s ,\\
\label{Hom2}
K (\nu (g') (s)) =& \sum _{g _{(k)}} d g _{(k)} \otimes (g _{(k)} ^{-1} ) ^* K ^{g _{(k)}} ((g') ^* s ).
\end{align}
Comparing terms in (\ref{Hom1}) and (\ref{Hom2}) not beginning with $d g'$, we get
$$ (g _{(k)} ^{-1} ) ^* (g ') ^* K ^{((g ') ^{-1} g _1, g _2 , \cdots , g _k )} s
= (g _{(k)} ^{-1} ) ^* K ^{g _{(k)}} ((g') ^* s ) .$$ 
It follows that
\begin{equation}
\label{Hom3}
K ^{(g _1 , g _2 , \cdots g _k)} = g _1 ^* \tilde K ^{(g _2 , \cdots , g _k)},
\end{equation}
for some (fiber-wise) maps $\tilde K ^{(g _2 , \cdots , g _k)} $.

The upshot of Equation \eqref{Hom3} is that it is necessary to consider infinite sums.
Here we consider the simplest example where Equation \eqref{Hom0} makes sense.

\begin{exam}
\label{LocalTensor}
Suppose that $\tilde K ^{(g _2 , \cdots , g _k)}$ in Equation \eqref{Hom3} are compactly supported tensors,
and such that only finitely many $\tilde K ^{(g _2 , \cdots , g _k)}$ differ from zero.
Then for any $s \in \Gamma ^\infty _c (\rE)$ there are at most finitely many $g _1 \in \rG$
such that $(g _1 ^* \tilde K ^{(g _2 , \cdots , g _k)}) s \neq 0 $.
In other words, $K$ is a well defined map from $ \Gamma ^\infty _c (\rE) $ to itself.
It is clear that $K$ furthermore extends to $\Omega ^\bullet _{ \ell ^\infty} (\rE _{\flat}\rtimes \rG)$.

Specializing to the case $k = 1$.
Comparing the $d g'$ term in (\ref{Hom1}) and (\ref{Hom2}) and using Equation (\ref{Hom3}),
one gets
\begin{equation}
\label{Circular}
\sum _{g _1 \in \rG} K ^{g _1} s = \sum _{g _1 \in \rG} (g _1 ^* \tilde K) s = 0 , \quad \Forall s .
\end{equation}
Note that one gets the same equation for all $g '$.
Thus a concrete example for a $\Omega _c ^\bullet (\rB \rtimes \rG ) $ is given by $\tilde K = d \chi $,
where $\chi \in C ^\infty _c (\rG)$ is defined in Equation \eqref{Partition}.
\end{exam}

Suppose that 
$ K = \sum _{g _{(k)}} d g _{(k)} \otimes (g _{(k)}^*) ^{-1} K ^{g _{(k)}} $ and 
$ K' = \sum _{g' _{(k')}} d g' _{(k)} \otimes (g _{(k)}^{\prime *}) ^{-1} K ^{\prime g' _{(k')}} 
: \Omega ^\bullet _{\ell ^\infty} (\rE_{\flat} \rtimes \rG ) \to \Omega ^\bullet _{\ell ^\infty} (\rE_{\flat} \rtimes \rG )$.
Then the composition is well defined.
It is explicitly given by
$$ (K \star K') s
:= \sum _{g _{(k)}, g' _{(k')}} d g' _{(k')} d g _{(k)}
\otimes (g _{(k)} ^* g ^{\prime *} _{(k')} ) ^{-1} ( (g _{(k')} ^{\prime *} K ^{g _{(k)}}) K ^{\prime g' _{(k')}} s).$$  

\begin{rem}
In this paper, we will mainly consider the sub-algebra of operators generated by 
$\Psi ^{- \infty} _{\infty} (\rM \times _\rB \rM , \rE) ^\rG $ and tensors as in Example \ref{LocalTensor}.
\end{rem}

\subsection{Hilbert-Schmit norms on $\Omega _c ^\bullet (\rB \rtimes \rG ) $-linear operators}
In this section, we expand the (semi)-norm in Definition \ref{NormDfn}.

\begin{dfn}
Define 
$$ \Psi ^{- \infty} _{\ell ^2 , m} (\rM \times _\rB \rM , \rE ) $$
to be the set of $\Omega _c ^\bullet (\rB \rtimes \rG ) $-linear operators of the form
$$ K = \sum _{g _{(k)}} d g _{(k)} \otimes (g _{(k)}^*) ^{-1} K ^{g _{(k)}} 
: \Omega ^\bullet _{e} (\rE _\flat \rtimes \rG ) \to \Omega ^\bullet _{\ell ^\infty} (\rE _\flat \rtimes \rG ),$$
such that $K ^{g _{(k)}} \in \varPsi ^{- \infty} _{\infty} (\rM \times _\rB \rM , \rE )$
satisfy the estimate
$$ \sum _{g _{(k)}} \| K ^{g _{(k)}} \| _{\HS m} (e) < \infty .$$
For any $K \in \Psi ^{- \infty} _{\ell ^2 , m} (\rM \times _\rB \rM , \rE ) $ define 
\begin{equation}
\label{HSDfn}
\| K \| _{\HS m} := \sum _{g _{(k)}} \| K ^{g _{(k)}} \| _{\HS m} (e) .
\end{equation}
Also, we denote 
$$ \Psi ^{- \infty} _{\ell ^2} (\rM \times _\rB \rM , \rE ) 
:= \bigcap _{m=0} ^\infty \Psi ^{- \infty} _{\ell ^2 , m} (\rM \times _\rB \rM , \rE ) .$$
\end{dfn}

Here we derive a formula for $\| \cdot \| _{\HS m}$.
Write $K ^{(g _1 , g _2 , \cdots g _k)} = g _1 ^* \tilde K ^{(g _2 , \cdots , g _k)}$.
Then 
\begin{align}
\label{NoPartition}
\nonumber
\| K \| ^2 _{\HS m} = \sum _{g _{(k)}} \| g _1 ^* \tilde K ^{(g _2 , \cdots , g _k)} \| ^2 _{\HS m} (e) & \\ \nonumber
= \sum _{i+j+k \leq m} \sum _{g _{(k)}} \Big( \int _\rB \int _{\rZ _x } \chi (x, z) & \int _{\rZ _x } \big|
(\dot \nabla ^{\hat \rE _\flat } )^i (\dot \partial ^\bs )^j (\dot \partial ^\bt)^k 
(g _1 ^* \tilde K ^{(g _2 , \cdots , g _k)} ) \big| ^2
(x, y, z)  \\
& \mu _x (y) \mu _x (z) \mu _\rB (x) \Big) \\ \nonumber
= \sum _{i+j+k \leq m} \sum _{g _2, \cdots g _k} \Big( \int _\rB \int _{\rZ _x } \int _{\rZ _x } \big| &
(\dot \nabla ^{\hat \rE _\flat } )^i (\dot \partial ^\bs )^j (\dot \partial ^\bt)^k 
(\tilde K ^{(g _2 , \cdots , g _k)} ) \big| ^2
(x, y, z) \\ \nonumber
& \mu _x (y) \mu _x (z) \mu _\rB (x) \Big) .
\end{align}
Clearly $\| \cdot \| _{\HS m} $ is positive definite,
therefore it defines a norm on $ \Psi ^{- \infty} _{\ell ^2 , m} (\rM \times _\rB \rM , \rE )$.

Next we generalize Corollary \ref{Main1} to 
$ \Psi ^{- \infty} _{\ell ^2 , m} (\rM \times _\rB \rM , \rE )$.

\begin{thm}
\label{Main4}
For any smooth, bounded $\rG$ invariant operator $A$, 
and $K \in \Psi ^{- \infty} _{\ell ^2 , m} (\rM \times _\rB \rM , \rE )$,
$$ A \star K , K \star A \in \Psi ^{- \infty} _{\ell ^2 , m} (\rM \times _\rB \rM , \rE ).$$
Moreover, there are constants $C_{m,l}>0$ such that
\begin{align*}
\| A \star K \| _{\HS m} \leq &
\big( \sum _{0 \leq l \leq m  } C _{m, l} \| A \|_{\op l} \big) \| K \| _{\HS m} ,\\
\| K \star A \| _{\HS m} \leq &
\big( \sum _{0 \leq l \leq m  } C _{m, l} \| A \|_{\op l} \big) \| K \| _{\HS m}.
\end{align*}
\end{thm}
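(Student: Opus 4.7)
The plan is to reduce the estimate to Corollary \ref{Main1} applied one component at a time, using the composition formula from the preceding subsection. Viewing the $\rG$-invariant operator $A$ as $\Omega_c^\bullet(\rB \rtimes \rG)$-linear with trivial form-degree, the formula together with the fact that $g^*_{(k)} A = A$ gives
\begin{equation*}
(A \star K)^{g_{(k)}} = A \circ K^{g_{(k)}}, \qquad (K \star A)^{g_{(k)}} = K^{g_{(k)}} \circ A,
\end{equation*}
so each component lies in $\varPsi^{-\infty}_\infty(\rM \times_\rB \rM, \rE)$. For the first product, the kernel of $A \circ K^{g_{(k)}}$ is precisely $\hat A$ applied to the kernel of $K^{g_{(k)}}$ (in the target variable), bringing us into the scope of Corollary \ref{Main1}.

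For the first estimate, I would apply Corollary \ref{Main1} at $g = e$ to each component, obtaining
\begin{equation*}
\|A \circ K^{g_{(k)}}\|_{\HS m}(e) \leq \sum_{g_1 \in \rS_m}\Big(\sum_{0 \leq l \leq m} C_{m,l}\|A\|_{\op l}\Big)\|K^{g_{(k)}}\|_{\HS m}(g_1^{-1}).
\end{equation*}
Squaring, applying Cauchy--Schwarz on the finite set $\rS_m$, and summing over $g_{(k)}$ reduces the claim to the translation-invariance identity
\begin{equation*}
\sum_{g_{(k)}} \|K^{g_{(k)}}\|^2_{\HS m}(g_1^{-1}) = \|K\|^2_{\HS m}, \qquad g_1 \in \rG.
\end{equation*}
This I would verify by writing $K^{(h_1,\ldots,h_k)} = h_1^*\tilde K^{(h_2,\ldots,h_k)}$, using that $g_1^* h_1^* = (h_1 g_1)^*$ on kernels, and reindexing the outer sum via $h_1 \mapsto h_1 g_1$; the partition-of-unity identity $\sum_g g^*\chi = 1$ then absorbs the $\chi$-weight, exactly as in the derivation of \eqref{NoPartition}. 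Absorbing the finite factor $|\rS_m|$ into the constants yields the bound on $\|A \star K\|_{\HS m}$.

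For $K \star A$ the operator $A$ now acts on the source ($\bs$-)variable of the kernel rather than the target ($\bt$-)variable, so Corollary \ref{Main1} does not apply verbatim. Since the geometric setup and the norm $\|\cdot\|_{\HS m}$ in Definition \ref{NormDfn} are entirely symmetric in the two fiber variables, the required $\bs$-analogue of Corollary \ref{Main1} is obtained by repeating the proof of Theorem \ref{AEstCor1} with a $\bs$-trivialization in place of $\hat\bx_\alpha$; alternatively one can pass to formal adjoints on kernels, which interchanges $\bs$ and $\bt$ and converts the problem back to the first case. The remaining reductions are identical. I expect the main technical point throughout to be the translation-invariance identity above: it is essentially routine bookkeeping with the partition of unity, but must be carried out with care so that the pullbacks $g_1^*$ on kernels interact correctly with the $\chi$-weight in Definition \ref{NormDfn}.
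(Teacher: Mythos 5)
Your plan for $A\star K$ is correct and fills in a step the paper leaves terse: commute $A$ past $(g_{(k)}^*)^{-1}$ using $\rG$-invariance, apply Corollary~\ref{Main1} component-wise to $A K^{g_{(k)}}$, and then use the translation-invariance of $\sum_{g_{(k)}}\|K^{g_{(k)}}\|^2_{\HS m}(\cdot)$, which is precisely what Equation~\eqref{NoPartition} encodes. (One small slip: the composition rule is $g_1^* h_1^* = (g_1 h_1)^*$, not $(h_1 g_1)^*$; the reindexing goes through either way.)

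For $K\star A$, however, your justification rests on a false premise. The norm in Definition~\ref{NormDfn} is \emph{not} symmetric in the two fiber variables $y$ and $z$: the cut-off $\chi(x,z)$ sits only on the $z$-variable, so the $g$-dependent norms $\|\cdot\|_{\HS m}(g)$ treat $\bs$ and $\bt$ differently. The ``$\bs$-analogue of Corollary~\ref{Main1}'' you propose to obtain by swapping the trivialization would therefore naturally be stated in a different norm (with $\chi(x,y)$ in place of $\chi(x,z)$), and comparing that to the stated norm would again require the translation-invariance argument. The paper sidesteps this by first rewriting $\|K\star A\|^2_{\HS m}$ with Equation~\eqref{NoPartition}: once the $\chi$-weight and the $g_1$-sum have been absorbed, the resulting integrand \emph{is} symmetric under $y\leftrightarrow z$ (with $\dot\partial^\bs$ and $\dot\partial^\bt$ swapped, which is harmless since one sums over all $j,k$ with $i+j+k\le m$), and interchanging $y$ and $z$ turns right-composition with $A$ into left-composition, reducing to the first case. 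Your fallback of passing to formal adjoints on kernels is essentially the same move and does work; the key point, which your write-up misses, is that the interchange must be performed on the $\chi$-free expression in \eqref{NoPartition}, not on Definition~\ref{NormDfn} itself.
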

\begin{proof}
Since $A $ is $\rG$-invariant, one has 
$$ A \star K = \sum _{g _{(k)}} d g _{(k)} \otimes (g _{(k)}^*) ^{-1} (A K ^{g _{(k)}}) .$$
The first inequality follows immediately from Corollary \ref{Main1}.

As for the second inequality, we use Equation \eqref{NoPartition} to get
\begin{align*}
\sum _{g _{(k)}} \big\| g _1 ^* (\tilde K ^{(g _2 , \cdots , g _k)} A ) \big\| ^2 _{\HS m} (e) \\
= \sum _{i+j+k \leq m} \sum _{g _2, \cdots g _k} \Big( \int _\rB \int _{\rZ _x } \int _{\rZ _x } \big| &
(\dot \nabla ^{\hat \rE _\flat } )^i (\dot \partial ^\bs )^j (\dot \partial ^\bt)^k 
(\tilde K ^{(g _2 , \cdots , g _k)} A ) \big| ^2
(x, y, z) \mu _x (y) \mu _x (z) \mu _\rB (x) \Big),
\end{align*}
and observe that one can interchange the roles of $y$ and $z$ in the last line.
\end{proof}

Similar to Theorem \ref{Main4}, we have
\begin{lem}
\label{Main5}
For any $F = \sum _{d g _{(k)}} d g _{(k)} g _1 ^* \tilde F ^{(g _2 , \cdots , g _k)}$ as in Example \ref{LocalTensor},
$K' = \sum _{g' _{(k')}} d g' _{(k)} \otimes (g _{(k)}^{\prime *}) ^{-1} K ^{\prime g' _{(k')}}
\in \Psi ^{- \infty} _{\ell ^2 , m} (\rM \times _\rB \rM , \rE )$, then
$$ F \star K , K \star F \in \Psi ^{- \infty} _{\ell ^2 , m} (\rM \times _\rB \rM , \rE ).$$
Moreover there exists $C' _m > 0$ (depending only on $F$) such that
\begin{align*}
\| F \star K' \| _{\HS m} \leq &
C' _m \| K' \| _{\HS m}, \\
\| K' \star F \| _{\HS m} \leq &
C' _m \| K' \| _{\HS m}.
\end{align*}
\end{lem}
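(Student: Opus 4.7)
The plan is to exploit Equation \eqref{NoPartition}, which expresses $\|\cdot\|_{\HS m}^2$ as a bare $\ell^2$-sum over reduced form indices of Sobolev integrals of the representatives $\tilde K^{(g_2,\ldots,g_k)}$ (with no $\chi$ partition-of-unity factor appearing), and combine it with the composition formula for $\Omega_c^\bullet(\rB \rtimes \rG)$-linear maps recorded in the previous subsection. Unpacking $F \star K'$ via that formula, and using $g_1^* g_2^* = (g_1 g_2)^*$ together with $F^{g_{(k)}} = g_1^* \tilde F^{(g_2,\ldots,g_k)}$ and $K^{\prime g'_{(k')}} = g_1^{\prime *} \tilde{K'}^{(g'_2,\ldots,g'_{k'})}$, the reduced kernel of $F \star K'$ at the combined index $(g'_2,\ldots,g'_{k'},g_1,g_2,\ldots,g_k)$ is
\[
\widetilde{F \star K'}^{(g'_2,\ldots,g'_{k'},g_1,g_2,\ldots,g_k)}
= \bigl( (g'_2 \cdots g'_{k'} g_1)^* \tilde F^{(g_2,\ldots,g_k)} \bigr)\cdot \tilde{K'}^{(g'_2,\ldots,g'_{k'})} .
\]

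Next I would apply the Leibniz rule to $(\dot\nabla^{\hat\rE_\flat})^i(\dot\partial^{\bs})^j(\dot\partial^{\bt})^l$ of this product. Because the $\rG$-action is by isometries preserving all the connections used to define these covariant derivatives, the $C^m$-norm of the translate $h^*\tilde F^{(g_2,\ldots,g_k)}$ equals that of $\tilde F^{(g_2,\ldots,g_k)}$ itself, which is finite by compactness of the support. Leibniz then yields the pointwise estimate
\[
\bigl|(\dot\nabla^{\hat\rE_\flat})^i(\dot\partial^{\bs})^j(\dot\partial^{\bt})^l \widetilde{F \star K'}^{(\ldots)}\bigr|^2
\dot\leq
C_{F,m}\,\mathbf{1}_{h^{-1}\Supp \tilde F^{(\ldots)}}
\sum_{i'+j'+l'\leq m}\bigl|(\dot\nabla^{\hat\rE_\flat})^{i'}(\dot\partial^{\bs})^{j'}(\dot\partial^{\bt})^{l'} \tilde{K'}^{(g'_2,\ldots,g'_{k'})}\bigr|^2,
\]
where $h = g'_2 \cdots g'_{k'} g_1$, and the constant $C_{F,m}$ depends only on $F$ and its derivatives up to order $m$.

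I would then integrate over $\rB$ and the two fibers and sum: for each fixed $(g'_2,\ldots,g'_{k'})$ the sum over $g_1 \in \rG$ of the indicators $\mathbf{1}_{h^{-1}\Supp \tilde F^{(\ldots)}}$ is a $\rG$-invariant function which, by properness of the $\rG$-action, is bounded uniformly on $\rM \times_\rB \rM$. This absorbs the $g_1$-sum into a uniform constant; the subsequent sum over $(g'_2,\ldots,g'_{k'})$ produces $\|K'\|_{\HS m}^2$ via \eqref{NoPartition}; and summing over the finitely many nonzero $(g_2,\ldots,g_k)$ in $F$ contributes a further constant. This gives the first inequality. The second inequality is obtained by the symmetric computation, interchanging the roles of $\bs$ and $\bt$ (equivalently of $y$ and $z$), exactly as in the last step of the proof of Theorem \ref{Main4}.

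The main technical point, and the reason Theorem \ref{Main4} cannot be applied directly, is that $F$ is not $\rG$-invariant: each $F^{g_{(k)}} = g_1^* \tilde F^{(g_2,\ldots,g_k)}$ is only a translate of a compactly supported piece, so Corollary \ref{Main1} is unavailable. The compact support of $\tilde F^{(g_2,\ldots,g_k)}$, together with the proper action of $\rG$, is precisely what allows the sum over the shift index $g_1$ to converge uniformly, thereby replacing the role of $\rG$-invariance used in the proof of Theorem \ref{Main4}.
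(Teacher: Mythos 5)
Your argument is essentially the same as the paper's: both unpack the reduced kernel of $F \star K'$ at the combined index, use the Leibniz rule plus $\rG$-invariance to bound the derivatives of the translated tensor $h^*\tilde F$ by a constant times a compactly supported cutoff, observe that the sum over the group of these translated cutoffs is uniformly bounded (properness/local finiteness), and then invoke \eqref{NoPartition} to reassemble $\|K'\|_{\HS m}^2$; the second inequality follows by symmetry. The paper phrases the cutoff bound as ``$\sum_g g^*\tilde\chi$ is bounded'' where yours uses indicator functions of translated supports, but these are the same estimate.
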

\begin{proof}
We only prove the first inequality. The second is similar.
Since we have 
\begin{align*}
F \star K' =& 
\sum _{g _{(k)}, g' _{(k')}} d g' _{(k')} d g _{(k)}
\otimes (g _{(k)} ^* g ^{\prime *} _{(k')} ) ^{-1} \big( (g _{(k')} ^{\prime *} F ^{g _{(k)}}) 
(g _1 ^{\prime *} \tilde K ^{\prime g' _{(k')}} ) \big) ,\\
\| F \star K' \| ^2 _{\HS m} \leq&
 \sum _{g _2, \cdots g _k}  \sum _{g' _2, \cdots g' _k} \sum _g \sum _{i+j+k \leq m} 
\int _\rB \int _{\rZ _x } \int _{\rZ _x } \\
& \big| (\dot \nabla ^{\hat \rE _\flat } )^i (\dot \partial ^\bs )^j (\dot \partial ^\bt)^k 
( (g ^* \tilde F ^{(g _2, \cdots g _k)}) \tilde K ^{(g' _2 , \cdots , g' _k)} ) \big| ^2
(x, y, z) \mu _x (y) \mu _x (z) \mu _\rB (x).
\end{align*}
The integrand is bounded by
$$ g ^* \tilde \chi (x, y) \| \tilde F ^{(g _2, \cdots g _k)} \| _{C ^m} 
\big| (\dot \nabla ^{\hat \rE _\flat } )^i (\dot \partial ^\bs )^j (\dot \partial ^\bt)^k 
( \tilde K ^{(g' _2 , \cdots , g' _k)} ) \big| ^2,$$
for some compactly supported function $\tilde \chi \geq 0$,
which depends only on the support of $ \tilde F ^{(g _2, \cdots g _k)}$.
Therefore $\sum _g g^* \tilde \chi $ is bounded.
Our inequality then follows from \eqref{NoPartition}.
\end{proof}

\subsection{Trace class operators.}
\begin{dfn}
\label{trDfn1}
Given any $\Omega ^\bullet _c (\rM \rtimes \rG ) $-linear map 
$K s = \sum _{g _{(k)}} d g _{(k)} \otimes (g _{(k)} ^* ) ^{-1} (K ^{g _{(k)}} s) $,
where $ K ^{g _{(k)}} \in \Psi ^{- \infty } (\rM \times _\rB \rM , \rE) $.
We say that $K$ is of trace class if for all $m$
\begin{equation}
\label{TrDfn}
\sum _{g _{(k)}} \Big\| \int _{\rZ _x} \chi (x, z) \tr (K ^{g _{(k)}} (x, z, z)) \mu _x (z) \Big\|^2 _{C ^m} 
< \infty .
\end{equation}
For a trace class operator,  we define 
\begin{align}
\label{TrDfn2} 
\tr _\Psi (K) 
:=& \Ab \Big(\sum _{g _{(k)}} 
\int _{\rZ _x} \chi (x, z) \tr (K ^{g _{(k)}} (x, z, z)) \mu _x (z) d g _{(k)} (g _k ^{-1} \cdots g _1 ^{-1} ) \Big) \\ \nonumber
& \in \Omega _{\ell ^2} ^\bullet (\rM \rtimes \rG ) _{\Ab},
\end{align}
where $\tr $ is the point-wise trace (c.f. \cite[(3.22)]{Lott;EtaleGpoid}).

\begin{rem}
Using similar arguments as the proof of Lemma \ref{dTr} below, 
one can show that $\tr _\Psi $ does not depend on $\chi$. 
\end{rem}

If $\rE ^\bullet $ is a $\bbZ$ graded vector bundle, define the super-trace $\str _\Psi $ as in 
\eqref{TrDfn2} with $\tr (\cdot)$ replaced by the super-trace $\str (\cdot)$.
\end{dfn}

It is well known that $\tr _\Psi $ is indeed a trace.
\begin{lem}
\cite[Proposition 3]{Lott;EtaleGpoid}
For any $\Omega _c ^\bullet (\rB \rtimes \rG)$-linear, trace class smoothing operators $K _1, K _2$,
$\tr _\Psi [K _1 , K _2] = 0 .$
\end{lem}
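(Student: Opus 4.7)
The task is to establish $\tr_\Psi(K_1 \star K_2) = (-1)^{|K_1|\cdot |K_2|} \tr_\Psi(K_2 \star K_1)$ in $\Omega_{\ell^2}^\bullet(\rB \rtimes \rG)_{\Ab}$. After applying the map $\Ab$ on both sides, this amounts to showing that the difference of the two ``pre-abelianized'' expressions lies in the closure of $[\Omega_c^\bullet(\rB\rtimes\rG),\Omega_c^\bullet(\rB\rtimes\rG)]$.

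The plan is as follows. First, I expand
$$K_i = \sum_{g^{(i)}_{(k_i)}} dg^{(i)}_{(k_i)} \otimes (g^{(i)*}_{(k_i)})^{-1} K_i^{g^{(i)}_{(k_i)}}, \quad i=1,2,$$
and apply the composition formula stated just before Example \ref{LocalTensor} to obtain the kernel components of $K_1 \star K_2$ and $K_2 \star K_1$ as double sums indexed by pairs of tuples. Then I apply Definition \ref{trDfn1} together with the standard kernel-composition law $(A\star B)(x,z,z) = \int_{\rZ_x} A(x,z,w)B(x,w,z)\,\mu_x(w)$, obtaining expressions of the shape
$$\tr_\Psi(K_1\star K_2) = \Ab\!\!\sum_{g^{(1)},g^{(2)}} I(g^{(1)},g^{(2)})\, dg^{(2)}_{(k_2)} dg^{(1)}_{(k_1)}(g^{(1)}_{(k_1)})^{-1}(g^{(2)}_{(k_2)})^{-1},$$
where $I(g^{(1)},g^{(2)})$ is a triple integral on $\rB \times \rZ_x \times \rZ_x$ weighted by $\chi(x,z)$ containing a pointwise matrix trace of a product of two pulled-back kernels; the analogous formula for $K_2 \star K_1$ has the roles of the two kernels (and the two tuples) swapped.

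Next, I compare the two expressions term by term. For each fixed pair of tuples, pointwise cyclicity of the matrix trace $\tr$ combined with Fubini and a relabeling $z\leftrightarrow w$ in the fiber integrations converts the integrand on the $K_1\star K_2$ side into that on the $K_2 \star K_1$ side, modulo two discrepancies: (i) the cut-off has become $\chi(x,w)$ instead of $\chi(x,z)$, and (ii) the noncommutative monomial $dg^{(2)}_{(k_2)} dg^{(1)}_{(k_1)}$ appears in the wrong order relative to $dg^{(1)}_{(k_1)} dg^{(2)}_{(k_2)}$. Discrepancy (i) is eliminated using the partition identity $\sum_{g\in \rG} g^*\chi = 1$ together with $\rG$-invariance of $\mu_x$, which allows $\chi$ to be freely redistributed under the summation over $\rG$ built into the sums over $g^{(1)}_1$ and $g^{(2)}_1$; this is precisely the standard mechanism by which a convolution algebra of an \'etale groupoid carries a canonical trace. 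Discrepancy (ii) is itself, up to the sign $(-1)^{|K_1|\cdot |K_2|}$ dictated by the Koszul rule, a finite sum of graded commutators in $\Omega_c^\bullet(\rB\rtimes\rG)$, hence vanishes in the quotient.

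The main obstacle is the careful simultaneous bookkeeping of signs, of the various $g^*$ pull-backs, and of the $\ell^2$-summability needed to legitimize the rearrangement of the intermediate infinite sums. The convergence is ensured by condition \eqref{TrDfn} in Definition \ref{trDfn1} (and by Theorem \ref{Main4} / Lemma \ref{Main5} for the compositions), while the sign bookkeeping is exactly the cyclic-bar-complex combinatorics already carried out in \cite[Proposition 3]{Lott;EtaleGpoid}; the present statement is essentially the transcription of Lott's trace identity from the $C^\infty$ convolution algebra to our smoothing-operator algebra $\Psi^{-\infty}_{\ell^2}(\rM\times_\rB\rM,\rE)$ on the fibered product groupoid.
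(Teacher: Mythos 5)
The paper does not actually supply a proof of this lemma: it is stated with the bare citation to \cite[Proposition 3]{Lott;EtaleGpoid}, immediately after the remark ``It is well known that $\tr_\Psi$ is indeed a trace,'' so there is no in-paper argument against which to compare yours line by line. Your sketch is therefore a reconstruction rather than an alternative route.

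That said, the plan you lay out is the right one and matches the standard argument (it is also how the paper handles the closely adjacent Lemma \ref{dTr}, whose proof explicitly invokes the partition $\sum_g g^*\chi = 1$ and the descent to $\Omega^\bullet_{\ell^2}(\rB\rtimes\rG)_{\Ab}$ to discard the cutoff dependence). The three ingredients you identify are precisely the ones needed: (a) pointwise cyclicity of the fiberwise matrix trace plus a Fubini swap of the two fiber variables, (b) the partition of unity $\sum_g g^*\chi = 1$ together with $\rG$-invariance of $\mu_x$ to trade $\chi(x,w)$ for $\chi(x,z)$ at the cost of a $\rG$-shift that is absorbed into the summation over $g^{(1)}_1$, and (c) the quotient by $\overline{[\Omega_c^\bullet,\Omega_c^\bullet]}$ to kill the reordering of the $dg$-monomials. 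One caveat worth being explicit about, since you glossed over it: the reordering discrepancy in (c) is not literally a finite sum of commutators of elements of $\Omega_c^\bullet(\rB\rtimes\rG)$; each fixed pair of tuples contributes one commutator, and one must then invoke the $\ell^2$-summability guaranteed by \eqref{TrDfn} (and Theorem \ref{Main4}/Lemma \ref{Main5}) to conclude that the resulting infinite sum of commutators lies in the closure, which is exactly what the definition of $\Omega^\bullet_{\ell^2}(\rB\rtimes\rG)_{\Ab}$ quotients by. With that made explicit, the argument is complete and is essentially the transcription of Lott's proof to the present setting, as you correctly observe.
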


Also one has the identity:
\begin{lem}
\label{dTr}
(c.f. \cite[Proposition 3]{Lott;FoliationInd})
Given any $\rG$-invariant connection $\nabla $ on $\rE _\flat ^\bullet$, 
and $\Omega _c ^\bullet (\rB \rtimes \rG)$-linear smoothing operator
$ K = \sum _{d g _{(k)}} d g _{(k)} (g _{(k)} ^*) K ^{d g _{(k)}} $ of trace class,
$$ (d _\rB  + d) (\tr _\Psi (K)) = \tr _\Psi ([\nabla + \nabla ^\rG , K]).$$  
\end{lem}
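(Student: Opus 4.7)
\medskip
\noindent\textbf{Proof plan.}
The plan is to establish the identity separately for the two parts of the differential, namely
$$d_\rB (\tr_\Psi K) = \tr_\Psi [\nabla, K] \quad \text{and} \quad d (\tr_\Psi K) = \tr_\Psi [\nabla^\rG, K],$$
and then add them. Throughout, work term by term in $g_{(k)}$, relying on the trace-class hypothesis \eqref{TrDfn} together with Theorem \ref{Main4} and Lemma \ref{Main5} to justify differentiating and interchanging the sums with the differential.

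\smallskip
\noindent\emph{Step 1: the horizontal piece.} Fix $g_{(k)}$ and consider the scalar form
$\omega^{g_{(k)}}(x) := \int_{\rZ_x} \chi(x,z)\,\tr\bigl(K^{g_{(k)}}(x,z,z)\bigr)\,\mu_x(z)$.
Using a local trivialization and the $\rG$-invariance of $\nabla$, I would differentiate under the integral, treating $\nabla$ as a connection on $\Psi^{-\infty}$ (the fiberwise smoothing operators) and using that the fiberwise trace annihilates commutators. The effect is
$$d_\rB \omega^{g_{(k)}}(x) = \int_{\rZ_x} \chi(x,z)\,\tr\bigl([\nabla, K^{g_{(k)}}](x,z,z)\bigr)\mu_x(z) + \int_{\rZ_x} (d_\rB \chi)(x,z)\,\tr\bigl(K^{g_{(k)}}(x,z,z)\bigr)\mu_x(z).$$
The second, $\chi$-boundary term represents exactly the difference between two choices of cutoff, hence vanishes in $\Omega_{\ell^2}^\bullet(\rB \rtimes \rG)_{\Ab}$ by the $\chi$-independence of $\tr_\Psi$ recorded in the remark following Definition \ref{trDfn1}. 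Assembling over $g_{(k)}$ and applying $\Ab$ gives the first half.

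\smallskip
\noindent\emph{Step 2: the algebraic piece.} Apply $d$ to $d g_{(k)}(g_k^{-1}\cdots g_1^{-1})$ via the formula
$d(d g_{(k)} g) = (-1)^k d g_{(k)} d g$
and expand using the Leibniz/product rule in $\Omega_c^\bullet(\rG)$. This yields a telescoping sum in which a new factor $d g'$ is inserted at every position. The connection $\nabla^\rG$ on the module $\Omega_e^\bullet(\rE_\flat\rtimes\rG)$ (constructed in \cite{Lott;FoliationInd}) is set up precisely so that $[\nabla^\rG, \nu(g)] = d g\,\nu(g)$, and unwinding \eqref{Module} one checks that $[\nabla^\rG, K]$ produces exactly the same telescoping sum of $g_{(k+1)}$-components after passing to $\tr_\Psi$ and taking $\Ab$. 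A key ingredient is that cyclic rotations of $d g_{(k)}(g_k^{-1}\cdots g_1^{-1})$ become equal in the abelianization, which lets one match, term by term, the $i$-th insertion from the product expansion with the $i$-th summand of $[\nabla^\rG, K]$.

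\smallskip
\noindent\emph{Main obstacle.} The principal difficulty lies in Step 1: one must ensure that the $d_\rB\chi$ contribution genuinely lies in $\overline{[\Omega_c^\bullet(\rB\rtimes\rG),\Omega_c^\bullet(\rB\rtimes\rG)]}$, and not merely vanish pointwise. This is essentially equivalent to the $\chi$-independence of $\tr_\Psi$, so I would first establish that independence by a direct partition-of-unity argument (write the difference of two cutoffs as $\chi_1 - \chi_2 = \sum_g g^*\chi_1 \cdot \chi_2 - \chi_1\cdot \sum_g g^*\chi_2$ and regroup using the invariance of $K^{g_{(k)}}$ under the diagonal $\rG$-action) and then invoke it to discard the boundary contribution. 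A secondary technical point is the interchange of $d_\rB + d$ with the infinite sum over $g_{(k)}$; Theorem \ref{Main4} provides the estimates in $\|\cdot\|_{\HS m}$ needed to justify this for all $m$, hence the resulting expression genuinely lies in $\Omega_{\ell^2}^\bullet(\rB\rtimes\rG)_{\Ab}$.
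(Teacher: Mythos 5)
Your overall strategy matches the paper's: split the differential into the geometric part $d_\rB$ and the algebraic part $d$, differentiate under the integral for the $d_\rB$ part to produce a commutator plus a boundary term from $d_\rH\chi$, and then argue the boundary term vanishes modulo $\overline{[\Omega_c^\bullet,\Omega_c^\bullet]}$. The paper proves the same two facts and also reduces the heart of the argument to the vanishing of the $d_\rH\chi$ boundary term.

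Where your proposal is imprecise, and where you would get stuck if you tried to execute Step~1 literally: the claim that the boundary term ``represents exactly the difference between two choices of cutoff'' is not correct as stated --- $d_\rH\chi$ is a horizontal $1$-form, not a difference $\chi_1-\chi_2$ of cutoff functions, so invoking $\chi$-independence of $\tr_\Psi$ does not directly discard it. What the two situations genuinely share is that both $\chi_1-\chi_2$ and $d_\rH\chi$ lie in the kernel of the averaging map (since $\sum_g g^*\chi=1$ forces $\sum_g g^*(d_\rH\chi)=0$). The paper handles this by working with $d_\rH\chi$ itself: insert $1=\sum_g g^*\chi$, pull back by $g$, pass to the commutator quotient, rewrite $g^{-1}\,dg_1\,g_1^{-1}g$ via the non-commutative Leibniz rule, and then observe that the resulting sums vanish by \eqref{Circular} together with the transformation law \eqref{Hom3}. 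Also note that the phrase ``invariance of $K^{g_{(k)}}$ under the diagonal $\rG$-action'' is wrong: $K^{g_{(k)}}$ is not $\rG$-invariant, it obeys $K^{(g_1,\ldots,g_k)}=g_1^*\tilde K^{(g_2,\ldots,g_k)}$ (Equation \eqref{Hom3}), and it is exactly this one-sided transformation property --- not invariance --- that makes the partition-of-unity rearrangement close up modulo commutators. So you should drop the reduction to $\chi$-independence and instead adapt the direct averaging argument to $d_\rH\chi$; the paper in fact states that $\chi$-independence of $\tr_\Psi$ is a corollary of the same technique, not an independent input. Your Step~2 (the $d$ part) is treated as routine in the paper, and your telescoping description is consistent with that.
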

\begin{proof}
For simplicity we only prove the case when $k = 1$, the other cases are similar.
It is well know that 
\begin{align*}
d _\rB (\tr _\Psi (K)) =& \sum _{ d g_1 } \int _{\rZ _x} \chi \tr \big([\nabla , K ^{d g_1}] (x, z, z) \big) \mu _x (z) d g _1 g _1 ^{-1} \\
&+ \sum _{ d g_1 } \int _{\rZ _x} ( d _\rH \chi ) \tr \big([\nabla , K ^{d g _1}] (x, z, z) \big) \mu _x (z) d g _1 g _1 ^{-1}.
\end{align*}
We must prove the second integral vanishes.
The operator $[\nabla , K]$ is also $\Omega _c ^\bullet (\rB \rtimes \rG)$-linear.
By \eqref{Hom3}, we may write $[\nabla , K ^{d g _1}] = g _1 ^* \tilde \varPsi $ for some smoothing operator $\tilde \varPsi$.
Consider for arbitrary $g \in \rG$
\begin{align*}
\int _{\rZ _x} & (g ^* \chi )( d _\rH \chi ) \tr \big(g _1 ^* \tilde \varPsi (x, z, z) \big) \mu _x (z) d g _1 g _1 ^{-1} \\
=& g ^* \int _{\rZ _x} \chi ( (g ^{-1}) ^* d _\rH \chi ) \tr \big( (g ^{-1} g _1) ^* \tilde \varPsi (x, z, z) \big) \mu _x (z) d g _1 g _1 ^{-1} \\
=& - \int _{\rZ _x} \chi ( (g ^{-1}) ^* d _\rH \chi ) \tr \big( (g ^{-1} g _1) ^* \tilde \varPsi (x, z, z) \big) \mu _x (z) 
g ^{-1} d g _1 g _1 ^{-1} g \\
& \mod [\Omega _c ^\bullet (\rB \rtimes \rG) , \Omega _c ^\bullet (\rB \rtimes \rG)] \\
=& - \int _{\rZ _x} \chi ( (g ^{-1}) ^* d _\rH \chi ) \tr \big( (g ^{-1} g _1) ^* \tilde \varPsi (x, z, z) \big) \mu _x (z) 
\big( d (g ^{-1} g _1) g _1 ^{-1} g - (d g ^{-1}) g \big)
\end{align*}
Summing over all $g \in \rG , g _1 \in \rG \setminus \{ e \}$ and using \eqref{Circular},
it follows that
$$ \sum _{ d g_1 } \int _{\rZ _x} ( d _\rH \chi ) \tr \big([\nabla , K ^{d g _1}] (x, z, z) \big) \mu _x (z) d g _1 g _1 ^{-1}
= 0 \in \Omega _{\ell ^2} ^\bullet (\rB \rtimes \rG ) _{\Ab}.$$
On the other hand, it is straightforward to compute
$$ d (\tr _\Psi (K)) = \tr _\Psi ([\nabla ^\rG , K]).$$
Hence the lemma.
\end{proof}

To construct examples of trace class operators, one uses the following lemma:
\begin{lem}
\label{TrClassCri}
For any $F s = \sum _{g _{(k)}} d g _{(k)} \otimes (g _{(k)} ^*) ^{-1} (F ^{g _{(k)}} s )$ 
as in Example \ref{LocalTensor},
$K \in \Psi ^{- \infty} _{\infty} (\rM \times _\rB \rM , \rE ^\bullet) ^\rG $
and $K' = \sum _{g' _{(k')}} d g' _{(k)} \otimes (g _{(k)}^{\prime *}) ^{-1} K ^{\prime g' _{(k')}}
\in \Psi ^{- \infty} _{\ell ^2 } (\rM \times _\rB \rM , \rE )$. Then
$K \star F \star K'$ is a trace class operator.
\end{lem}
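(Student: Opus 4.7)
The plan is to split the argument into two parts: first, show that the composition $K \star F \star K'$ lies in $\Psi^{-\infty}_{\ell^2, m}(\rM \times_\rB \rM, \rE)$ for every $m \geq 0$; second, bound the $C^m$ norm of the fiberwise diagonal trace by a $\|\cdot\|_{\HS m'}$ norm of sufficiently high order $m'$, from which the trace class condition \eqref{TrDfn} follows by termwise summation.

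The first part is an immediate consequence of the results just proved. Lemma \ref{Main5} applied to $F$ (which is of the type in Example \ref{LocalTensor}) gives $F \star K' \in \Psi^{-\infty}_{\ell^2, m}$ for every $m$. Since $K \in \Psi^{-\infty}_\infty(\rM \times_\rB \rM, \rE^\bullet)^\rG$ is $\rG$-invariant, smooth, and defines a bounded fiberwise operator of every order via the vector representation, Theorem \ref{Main4} then yields
$$K \star F \star K' = K \star (F \star K') \in \Psi^{-\infty}_{\ell^2, m}(\rM \times_\rB \rM, \rE),$$
with $\sum_{g_{(k)}} \| (K \star F \star K')^{g_{(k)}} \|_{\HS m}^2 (e) < \infty$ for every $m$.

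The second part rests on a Sobolev-type estimate which I would establish separately: for every $m \geq 0$ there exist $m' = m'(m, \dim \rZ, \rk \rE) \geq m$ and a constant $C_m > 0$ (independent of the kernel) such that for every $\varphi \in \Psi^{-\infty}_\infty(\rM \times_\rB \rM, \rE)$,
$$\Big\| \int_{\rZ_x} \chi(x,z)\, \tr(\varphi(x,z,z))\, \mu_x(z) \Big\|_{C^m(\rB)}^2 \dot\leq \|\varphi\|_{\HS m'}^2 (e).$$
To prove this I would lift horizontal derivatives in $x$ through $\rH$ into the integrand, commute them with $\dot\nabla^{\hat\rE_\flat}$, $\dot\partial^\bs$, $\dot\partial^\bt$ up to lower-order $C^\infty$-bounded corrections, then apply Sobolev embedding on the fibers of $\bs$ (valid uniformly in $x$ by bounded geometry of $\rM_0$) to restrict the kernel to the diagonal $y = z$, costing roughly $\lceil \dim \rZ / 2 \rceil$ Sobolev orders. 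The $\chi$-weighted $z$-integration then matches the definition of $\|\cdot\|_{\HS m'}$, and applying this to each $(K \star F \star K')^{g_{(k)}}$ and summing in $g_{(k)}$ gives the trace class condition.

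The main obstacle is the Sobolev estimate above: one must ensure the constants are uniform in $x \in \rB$ and across the labels $g_{(k)}$, and must reconcile the horizontal derivative on $\rB$ with the operator $\dot\nabla^{\hat\rE_\flat}$ using only $C^\infty$-bounded lower-order corrections. Both points rely on the bounded geometry of $\rM \times_\rB \rM$ and the $\rG$-invariance of the geometric data established in Section \ref{Geom}, together with the finiteness of the set $\rS$ used to control $\chi$-weighted cut-offs.
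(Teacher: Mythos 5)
Your first step is sound and actually a cleaner way than the paper to establish $K \star F \star K' \in \Psi^{-\infty}_{\ell^2,m}(\rM\times_\rB\rM,\rE)$ for all $m$: invoking Lemma \ref{Main5} for $F\star K'$ and then Theorem \ref{Main4} for the $\rG$-invariant $K$ is exactly the right use of the machinery built in Section~3. The paper does not invoke these lemmas explicitly, working instead directly with the triple-product integrand, but the two routes to this point are essentially equivalent.

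The second step is where your approach genuinely diverges from the paper's, and where the gap lies. You propose a \emph{general} estimate of the form $\|\int_{\rZ_x}\chi\,\tr(\varphi(x,z,z))\mu_x\|_{C^m(\rB)} \dot\leq \|\varphi\|_{\HS m'}(e)$ for arbitrary $\varphi\in\Psi^{-\infty}_\infty$, to be proved by a fiber-wise Sobolev trace theorem. This estimate is plausible, but it is not trivial, and it is the substance of the whole lemma; saying you ``would establish it separately'' leaves the proof incomplete. Concretely: the $\HS m'$ norm is an $L^2$-type norm in $(x,y,z)$ with the $\chi(x,z)$ cutoff, while you need pointwise control on the diagonal $y=z$, which costs roughly $\dim\rZ/2$ Sobolev orders in the fiber direction, \emph{and} you then still need to upgrade $L^2$ control in $x$ (built into the $\HS$ norm) to $C^m(\rB)$, which requires a further Sobolev embedding on the compact base $\rB$ costing roughly $\dim\rB/2$ orders. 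You mention only the first of these two embeddings; the base-direction step, explicit in the paper as Equation \eqref{TrEst3}, is missing from your plan.

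The paper avoids the fiber trace theorem entirely by exploiting the specific structure of the composition: the diagonal value $(K\star F\star K')(x,z,z) = \int_{\rZ_x} K(x,z,y)\,((g'_{(k')}g)^*\tilde F)(x,y)\,K'^{g'_{(k')}}(x,y,z)\,\mu_x(y)$ is a \emph{pairing} in the middle variable $y$, so Cauchy--Schwarz in $y$ bounds it directly by $\|K\|_{\HS 0}(e)$-type and $\|K'^{g'_{(k')}}\|_{\HS 0}(e)$-type quantities with no loss of Sobolev orders in the fiber. Horizontal derivatives are then handled by differentiating under the integral sign (Leibniz rule, the connection $L^\flat$, and the compact support of $\tilde F$), and only at the very end is the compact-base Sobolev embedding applied to pass from $\cW^m(\rB)$ to $C^{m'}(\rB)$. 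This is both more economical in Sobolev orders and avoids having to justify uniformity of a trace theorem over the non-compact fibers. If you want to keep your route, you must actually prove the claimed Sobolev-trace estimate (including the base-direction embedding), and you should be aware it is wasteful compared to the pairing argument; if you adopt the pairing viewpoint, your first step can be dropped entirely, as the composition structure already gives everything.
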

\begin{proof}
We use similar arguments as the proof of \cite[Theorem 4.6]{So;CommTorsion}.
For simplicity we only consider $k = 1$. The general cases are similar.
Denote by $\tilde \theta $ the characteristic function of support of $\tilde F$, and write 
$$ G (x, y, z) := K (x, z, y) ( (g' _{(k')} g )^* \tilde F) (x, y) K ^{\prime g' _{(k')}} (x, y, z) .$$
Then by the Cauchy-Schwarz inequality
\begin{align*}
\Big\| \int _{\rZ _x} & \chi (x, z) \tr \Big( \int _{\rZ _x} G (x, y, z) \mu _x (y) \Big) \mu _x (z) \Big\| ^2 _{\cL ^2 (\rB)} \\
\dot \leq &
\| \tilde F \| ^2 _{C ^0} 
\Big( \int _\rB \int _{\rZ _x } \chi \int _{\rZ _x } 
((g' _{(k')} g) ^* \tilde \theta) \big| K (x, z, y) \big| ^2 \mu _x (y) \mu _x (z) \mu _\rB (x) \Big) \\
& \times \Big( \int _\rB \int _{\rZ _x } \chi \int _{\rZ _x }
((g' _{(k')} g) ^* \tilde \theta) \big| K ^{\prime g' _{(k')}} (x, y, z) \big| ^2 \mu _x (y) \mu _x (z) \mu _\rB (x) \Big).
\end{align*}
Sum over all $g$ and then $g' _{(k')}$, 
and using the fact that for each $g' _{(k')} $ fixed, 
the support of $(g' _{(k')} g) ^* \tilde F $ is a locally finite cover of $\rM $, 
one gets
\begin{align*}
\sum _{g' _{(k')}} \sum _{g} \Big\| \int _{\rZ _x} & \chi (x, z) \tr \Big( \int _{\rZ _x} 
K (x, z, y) ((g' _{(k')} g )^* \tilde F) (x, y) K ^{\prime g' _{(k')}} (x, y, z) \mu _x (y) \Big) \mu _x (z) \Big\| ^2 _{\cL ^2 (\rB)} \\
\dot \leq & \sum _{g' _{(k')}} \| \tilde F \| ^2 _{C ^0 } \| K \| ^2 _{\HS 0} (e) \| K ^{\prime g' _{(k')}} \| ^2 _{\HS 0} (e) \\
=& \| \tilde F \| ^2 _{C ^0 } \| K \| ^2 _{\HS 0} \| K ^{\prime g' _{(k')}} \| ^2 _{\HS 0} .
\end{align*}
We turn to estimate its derivative.
Differentiating under the integral sign, one gets
\begin{align*}
\Big| \nabla ^{\wedge ^\bullet T ^* \rB} \Big( \int _{\rZ _x} & \chi (x, z) \tr \Big( \int _{\rZ _x} 
G \mu _x (y) \Big) \mu _x (z) \Big) \Big| \\
\leq & \int _{\rZ _x} (L ^\flat \chi (x, z)) \tr \Big( \int _{\rZ _x} 
G \mu _x (y) \Big) \mu _x (z) \\
&+ \int _{\rZ _x} \chi (x, z) \Big( \nabla ^{(\pi ^* \wedge ^\bullet T \rB) _\flat } 
\tr \Big( \int _{\rZ _x} G \mu _x (y) \Big) \Big) \mu _x (z) \\
&+ \int _{\rZ _x} \chi (x, z) \tr \Big( \int _{\rZ _x} G \mu _x (y) \Big) (L ^\flat \mu _x (z)),
\end{align*}
where $L ^\flat $ is the $(1, 0)$ component of $D _\rB$ in Definition \ref{BismutDfn} (with $\rE $ trivial),
which is a $C ^\infty (\rB)$ connection.
Since $|L ^\flat \mu _x (z) |$ equals $|\mu _x (z) |$ multiplied by some bounded function,
it follows that 
$$ \sum _{g} \Big\| \int _{\rZ _x} \chi \tr \Big( \int _{\rZ _x} 
G \mu _x (y) \Big) (L ^\flat \mu _x (z)) \Big\| ^2 _{\cL ^2 (\rB)} 
\dot \leq \| \tilde F \| ^2 _{C ^0 } \| K \| ^2 _{\HS 0} (e) \| K ^{\prime g' _{(k)}} \| ^2 _{\HS 0} (e).$$
Similarly, write 
$ L ^\flat \chi (x, z) = \sum _{g' \in \rG } (g ^{\prime *} \chi) (x, z) (L ^\flat \chi ) (x, z).$
The sum is finite because $L ^\flat \chi $ is compactly supported.
Then
\begin{align*}
\sum _{g} \Big\| \int _{\rZ _x} & (L ^\flat \chi (x, z)) \tr \Big( \int _{\rZ _x} 
G \mu _x (y) \Big) \mu _x (z) \Big\| ^2 _{\cL ^2 (\rB)} \\
\leq & \sum _{g' \in \rS } \sum _{g} \Big\| \int _{\rZ _x} \chi (x, z) ((g ^{\prime *}) ^{-1} L _{X ^\rH } \chi ) 
\tr \Big( \int _{\rZ _x} (g ^{\prime *}) ^{-1} G \mu _x (y) \Big) \mu _x (z) \Big\| ^2 _{\cL ^2 (\rB)} \\
\dot \leq & \sum _{g' \in \rS } \| \tilde F \| ^2 _{C ^0 } \| K \| ^2 _{\HS 0} (e) \| K ^{\prime g' _{(k)}} \| ^2 _{\HS 0} ((g ' ) ^{-1}).
\end{align*}
Lastly, by the Leibniz rule, we have
\begin{align*}
\Big| \nabla ^{(\pi ^* \wedge ^\bullet T \rB) _\flat } & \tr \Big( \int _{\rZ _x} G \mu _x (y) \Big) \Big|^2 \\
\dot \leq & \int _{\rZ _x} (|\dot \nabla ^{\hat \rE _\flat } K | +| K |)^2
(|\nabla ^{\hat \rE} g^* F | + | g^* F |)^2 (|\dot \nabla ^{\hat \rE _\flat } K' | +| K' |)^2 \mu _x (y).
\end{align*}
It follows that 
\begin{align*}
\sum _{ g} \Big\| \int _{\rZ _x} & \chi (x, z) \Big(  \nabla ^{(\pi ^* \wedge ^\bullet T \rB) _\flat } \tr 
\Big( \int _{\rZ _x} G \mu _x (y) \Big) \Big) \mu _x (z) \Big\| ^2 _{\cL ^2 (\rB)} \\
\dot \leq & \sum _{g' \in \rS } \| \tilde F \| ^2 _{C ^1 } \| K \|^2  _{\HS 1} (e) \| K ^{\prime g' _{(k)}} \| ^2 _{\HS 1} ((g ' ) ^{-1}). 
\end{align*}
Adding these estimates together, we have proven that
\begin{align}
\label{TrEst1}
\sum _{ g} \Big\| & \nabla ^{\wedge ^\bullet T ^* \rB} \Big( \int _{\rZ _x} \chi (x, z) \tr \Big( \int _{\rZ _x} 
G \mu _x (y) \Big) \mu _x (z) \Big) \Big\| ^2 _{\cL ^2 (\rB)} \\ \nonumber
& \dot \leq \sum _{g' \in \rS } \| \tilde F \| ^2 _{C ^1 } \| K \| ^2 _{\HS 1} (e) \| K ^{\prime g' _{(k')}} \| ^2 _{\HS 1} ((g ' ) ^{-1}) \\
\sum _{g _{(k)}} \sum _g \Big\| & \int _{\rZ _x} \chi (x, z) 
\tr \big( (K ( (g' _{(k)})^* F ^g ) K ^{\prime g' _{(k)}}) (x, z, z) \big) \mu _x (z) \Big\|^2 _1 \\ 
\nonumber
& \dot \leq \| \tilde F \| ^2 _{C ^1 } \| K \|^2  _{\HS 1} \| K' \| ^2 _{\HS 1}.
\end{align}
Clearly, the same arguments for Equation \eqref{TrEst1} can be repeated for all derivatives, 
and one gets for any $m$,
\begin{align}
\label{TrEst2}
\sum _{g _{(k)}} \sum _g \Big\| & \int _{\rZ _x} \chi (x, z) 
\tr \big( (K ( (g' _{(k)})^* F ^g ) K ^{\prime g' _{(k)}} ) (x, z, z) \big) \mu _x (z) \Big\|^2 _m \\ 
\nonumber
& \dot \leq \| \tilde F \| ^2 _{C ^m } \| K \|^2  _{\HS m} \| K' \| ^2 _{\HS m},
\end{align}
for some finite sets $\rS _m$.
By the Sobolev embedding theorem (for Sobolev spaces on the compact manifold $\rB$), it follows that for any $m'$, 
there exists $m$ such that
\begin{align}
\label{TrEst3}
\sum _{g _{(k)}} \sum _g \Big\| & \int _{\rZ _x} \chi (x, z) 
\tr \big( (K ( (g' _{(k)})^* F ^g ) K ^{\prime g' _{(k)}} ) (x, z, z) \big) \mu _x (z) \Big\|^2 _{ C^ {m'}} \\ 
\nonumber
& \dot \leq \| \tilde F \| ^2 _{C ^m } \| K \|^2  _{\HS m} \| K' \| ^2 _{\HS m}.
\end{align}
Hence $K \star F \star K'$ satisfies \eqref{TrDfn}. 
\end{proof}

\subsection{The Bismut super-connection over $\rB \rtimes \rG$}
In this section, we generalize the Bismut super-connection to the convolution algebra. 
Let $\rE \to \rM$ be a flat $\rG$-contravariant vector bundle with a flat connection $\nabla ^\rE$. 
One regards $\rE _\flat $ as a contravariant vector bundle over $\rB$.
Hence one has a $\cC ^* (\rB \rtimes \rG) $ module $\Gamma ^\infty _c (\rE _\flat ) $ 
by Definition \ref{VectorDfn}.

\begin{dfn}
Let $\chi \in C ^\infty _c (\rM)$ be as in Equation \eqref{Partition}.
Define the operator
$\nabla ^\rG : \Gamma _c ^\infty (\rE _\flat) 
\to \Omega ^{1} _{e} (\rE _\flat \rtimes \rG )$
by the formula
\begin{equation}
\label{NonCommConn}
\nabla ^\rG u := \sum _{g \in \rG} d g \otimes \chi ((g ^{-1}) ^* u ) .
\end{equation}
\end{dfn}

\begin{lem}
\label{ConnConstruction}
The operator $D _\rB + \nabla ^\rG$ is a connection of the
$\cC ^\infty (\rB \rtimes \rG)$ module $\Gamma ^\infty (\rE _\flat )$.
\end{lem}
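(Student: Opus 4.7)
The plan is to verify the graded Leibniz rule
\begin{equation*}
(D _\rB + \nabla ^\rG)(\nu (f) u) = \nu ((d _\rB + d) f) u + \nu (f) (D _\rB + \nabla ^\rG) u,
\end{equation*}
for all $f \in \cC ^* _c (\rB \rtimes \rG)$ and $u \in \Gamma ^\infty (\rE _\flat) = \Gamma ^\infty _c (\rE)$, which is the defining property of a connection on the left $\cC ^* _c (\rB \rtimes \rG)$-module $\Gamma ^\infty (\rE _\flat)$. Since $d _\rB + d$ is a graded derivation and every element of $\cC ^* _c (\rB \rtimes \rG)$ is a finite linear combination of products $f _0 \star g _0$ with $f _0 \in C ^\infty (\rB)$ and $g _0 \in \rG$, it is enough to check the identity on these two families of generators.

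For $f = f _0 \in C ^\infty (\rB)$ the argument is essentially classical. Each summand of $D _\rB = d ^{\nabla ^\rE} _\rV + L ^{\rE ^\bullet _\flat} + \iota _\varTheta$ satisfies the Leibniz rule with respect to multiplication by $\pi ^* f _0$: the fiber-wise differential annihilates $\pi ^* f _0$, $L ^{\rE ^\bullet _\flat}$ produces the expected $(d _\rB f _0) u$ term, and $\iota _\varTheta$ commutes with $\pi ^* f _0$. Using the identity $f _0 \, dg = dg \cdot ((g ^{-1}) ^* f _0)$ in $\Omega ^\bullet _c (\rB \rtimes \rG)$ derived from Definition \ref{NonCommForm}, a direct term-by-term comparison of the coefficients of $dg$ shows $\nabla ^\rG (\nu (f _0) u) = \nu (f _0) \nabla ^\rG u$. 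Since $(d _\rB + d) f _0 = d _\rB f _0$, this yields the Leibniz rule in this case.

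The case $f = g _0 \in \rG$ is where the specific formula \eqref{NonCommConn} earns its keep. The data defining $D _\rB$ are all $\rG$-invariant, so $D _\rB \circ g _0 ^* = g _0 ^* \circ D _\rB$ and the $D _\rB$ contributions cancel, leaving only $\nabla ^\rG$ to be analyzed. Expanding via the product rule $g _0 \cdot dh = d (g _0 h) - d g _0 \cdot h$ in $\Omega ^1 _c (\rG)$, one obtains
\begin{equation*}
\nu (g _0) \nabla ^\rG u = \sum _{h \in \rG} d (g _0 h) \otimes \chi (h ^{-1}) ^* u \; - \; d g _0 \otimes \sum _{h \in \rG} h ^* \big(\chi (h ^{-1}) ^* u\big).
\end{equation*}
The change of variables $h' = g _0 h$, together with $d e = 0$, rewrites the first sum as $\nabla ^\rG (g _0 ^* u)$, while the partition-of-unity identity $\sum _h h ^* \chi = 1$ from \eqref{Partition} collapses the second sum to $d g _0 \otimes u = \nu (d g _0) u$. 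Rearranging and using $(d _\rB + d) g _0 = d g _0$ gives the Leibniz rule in this case.

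The main subtlety is the bookkeeping in the $g _0$ case: two infinite sums conspire via the partition of unity to reproduce exactly $\nabla ^\rG (g _0 ^* u)$ together with the boundary term $\nu (d g _0) u$, which is precisely what dictates the form of $\nabla ^\rG$ in \eqref{NonCommConn}. Convergence causes no difficulty because, for $u \in \Gamma ^\infty _c (\rE)$ and compactly supported $\chi$, only finitely many $g \in \rG$ make $\chi (g ^{-1}) ^* u$ nonzero by properness of the $\rG$-action, so all the sums involved are in fact finite and land in $\Omega ^1 _e (\rE _\flat \rtimes \rG)$.
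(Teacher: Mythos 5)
Your proof is correct and uses the same key mechanism as the paper's: the partition-of-unity identity \eqref{Partition} collapses one infinite sum to reproduce $\nu(dg_0)u$, while a reindexing $h \mapsto g_0 h$ (together with $de = 0$) identifies the other sum with $\nabla^\rG(g_0^* u)$. The only organizational difference is that the paper verifies the Leibniz rule for $\nabla^\rG$ directly against a general element $fg \in \cC^*_c(\rB \rtimes \rG)$ in a single computation (leaving the $D_\rB$ contribution implicit, since the classical Bismut super-connection is already $C^\infty(\rB)$-compatible and $\rG$-equivariant), whereas you split into the two multiplicative generating families $C^\infty(\rB)$ and $\rG$; the computational content is the same.
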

\begin{proof}
It suffices to check $\nabla ^\rG (\nu (f g ) u) = \nu (f g ) (\nabla ^\rG u ) + \nu (f d g ) u $
for any $f g \in \cC ^\infty (\rB \rtimes \rG), u \in \Gamma ^\infty (\rE _\flat)$.
Indeed one has
\begin{align*}
\nabla ^\rG (\nu (f g ) u)
=& \sum _{g _1 \in \rG } d g _1 \otimes \chi ((g _1 ^{-1} )^* f )((g _1 ^{-1} g ) ^* u ), \\
\nu (f g) (\nabla ^\rG u ) =& - \sum _{g _1 \in \rG } d g \otimes ((g ^{-1} ) ^* f ) (g _1 ^* \chi ) u
+ \sum _{g _1 \in \rG } d (g g _1 ) \otimes (((g g _1 ) ^{-1} ) ^* f ) \chi ((g _1 ^{-1} ) ^* u ) \\
=& - \nu (f d g ) u +\nabla ^\rG (\nu (f g ) u) . \qedhere
\end{align*}
\end{proof}

The $\rG$-invariant inner product $\langle \, , \rangle _\rE $ on $\rE _\flat$ defined in Equation \eqref{BismutInner}
induces a $\cC ^* (\rB \rtimes \rG )$ valued inner product on $\Gamma ^\infty (\rE _\flat)$ by the formula
\begin{equation}
\label{NCInner}
\langle s _1 , s _2 \rangle _{\rE _\flat \rtimes \rG} (x, g )
:= \langle s _1 , (g ^* s _2 ) \rangle _{\rE _\flat} (x).
\end{equation}
Note that for any 
$s _1 , s _2 \in \Gamma ^\infty _c (\rE _\flat)$,
$ \langle s _1 , (g ^* s _2 ) \rangle _{\rE _\flat} (x) = 0$ for all but finitely many $g $.
This new inner product 
$\langle \, , \rangle _{\rE _\flat \rtimes \rG}$ defines a pre-Hilbert $C ^* _c (\rB \rtimes \rG )$ module structure. 
More precisely:
\begin{lem}
For any $f \in C ^* _c (\rB \rtimes \rG ), s _1 , s _2 \in \Gamma _c ^\infty (\rE _\flat) $,
\begin{align*}
\langle s _2 , s _1 \rangle _{\rE _\flat \rtimes \rG} 
=& (\langle s _1 , s _2 \rangle _{\rE _\flat \rtimes \rG}) ^\sharp ,\\
f \star \langle s _1 , s _2 \rangle _{\rE _\flat \rtimes \rG} 
=& \langle \nu (f) (s _1 ) , s _2 \rangle _{\rE _\flat \rtimes \rG}.
\end{align*}
\end{lem}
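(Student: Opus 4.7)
The plan is to verify both identities by direct, bookkeeping-level computation, expanding each side as a formal sum over $g \in \rG$ with coefficients that are scalar-valued functions on $\rB$, and matching coefficients using the $\rG$-invariance of the underlying inner product $\langle \cdot, \cdot \rangle_{\rE _\flat}$ from \eqref{BismutInner}. Since $s_1, s_2 \in \Gamma^{\infty}_c(\rE_\flat)$ are compactly supported, only finitely many $g \in \rG$ contribute, so all sums make sense term-by-term and no convergence issues arise.

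For the first (Hermitian symmetry) identity, I would first write
$\langle s_1, s_2 \rangle_{\rE_\flat \rtimes \rG} = \sum_{g \in \rG} \langle s_1, g^* s_2 \rangle_{\rE_\flat} \cdot g$.
Then I would apply the involution from Definition \ref{NonCommForm} term-by-term and reindex the sum via $g \mapsto g^{-1}$. The resulting coefficient of $g$ should be identified with $\langle s_2, g^* s_1 \rangle_{\rE_\flat}$ after invoking the $\rG$-invariance property $\langle g^* u, g^* v \rangle_{\rE_\flat} = g^* \langle u, v \rangle_{\rE_\flat}$, which follows directly from the $\rG$-invariance of $g_\rE$ (so that the action of $g^{-1}$ on fibers is an isometry) together with the $\rG$-invariance of the vertical Riemannian measures $\mu_x$ used in \eqref{BismutInner}.

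For the second ($\cC^*_c(\rB \rtimes \rG)$-sesquilinearity) identity, by $\bbC$-linearity it is enough to verify the statement on a generator $f = f_0 g_0 \in \cC^*_c(\rB \rtimes \rG)$. I would expand the left-hand side $(f_0 g_0) \star \langle s_1, s_2 \rangle_{\rE_\flat \rtimes \rG}$ using the convolution product on $\cC^*_c(\rB \rtimes \rG)$, and expand the right-hand side $\langle \nu(f_0 g_0) s_1, s_2 \rangle_{\rE_\flat \rtimes \rG} = \langle f_0 (g_0^* s_1), s_2 \rangle_{\rE_\flat \rtimes \rG}$ using Definition \ref{VectorDfn} together with the definition \eqref{NCInner} of the inner product. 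Pulling the scalar function $f_0$ out of the inner product, applying the same $\rG$-invariance identity to rewrite
$\langle g_0^* s_1, h^* s_2 \rangle_{\rE_\flat} = g_0^* \langle s_1, (g_0^{-1} h)^* s_2 \rangle_{\rE_\flat}$,
and finally reindexing the sum via $h = g_0 g$, will recover the expansion of the left-hand side coefficient by coefficient.

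The main obstacle is pure bookkeeping: keeping straight the direction of the $\rG$-action (right on $\rM$ and $\rB$, but left on sections via $g^* h^* = (gh)^*$), the correct placement of inverses in the involution and in the definition of $\langle\cdot,\cdot\rangle_{\rE_\flat \rtimes \rG}$, and the interaction of pullbacks with the convolution. No deep input beyond the $\rG$-invariance of $\langle \cdot, \cdot \rangle_{\rE_\flat}$ and the structural formulas of the groupoid algebra is required.
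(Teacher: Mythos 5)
Your proposal is correct and follows essentially the same route as the paper's own proof: write $\langle s_1, s_2 \rangle_{\rE_\flat \rtimes \rG} = \sum_g \langle s_1, g^* s_2 \rangle_{\rE_\flat}\, g$, verify each identity coefficient-by-coefficient using the $\rG$-invariance $g^*\langle u,v\rangle_{\rE_\flat} = \langle g^*u, g^*v\rangle_{\rE_\flat}$ together with a reindexing ($g \mapsto g^{-1}$ for the first identity, $g \mapsto g_0 g$ for the second), noting finiteness of supports. One small point worth flagging: you cite the involution from Definition~\ref{NonCommForm} (which in degree~$0$ gives $(fg)^\sharp = ((g^{-1})^*\bar f)\,g^{-1}$), and that is indeed the formula the paper's proof uses; be aware that the earlier stand-alone definition of $\sharp$ on $\cC^*_c(\rB\rtimes\rG)$ is written as $(g^*\bar f)g^{-1}$, which appears to be a typo inconsistent with both anti-multiplicativity and the proof -- using the Definition~\ref{NonCommForm} form, as you do, is the right choice.
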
 
\begin{proof}
Equation \eqref{NCInner} is equivalent to
$ \langle s _1 , s _2 \rangle = \sum _{g _1 \in \rG} \langle s _1 , (g _1 ^* s _2 ) \rangle _{\rE _\flat} g _1 .$
Hence one verifies the first formula:
$$ ( \langle s _1 , s _2 \rangle _{\rE _\flat \rtimes \rG}) ^\sharp
= \sum _{g _1 \in \rG} ( g _1 ^{-1} )^* \langle (g _1 ^* s _2 ), s _1 \rangle _{\rE _\flat} g _1 ^{-1} 
= \langle s _2 , s _1 \rangle _{\rE _\flat \rtimes \rG} .$$
As for the second equality, it suffices to verify for any $f ^{g _0 } g _0 \in \cC ^* _c (\rM \rtimes \rG ),$
$$ (f ^{g _0 } g _0 ) \star \langle s _1 , s _2 \rangle _{\rE _\flat \rtimes \rG}
= \sum _{g _1 \in \rG} f ^{g _0 } g _0 ^* (\langle s _1 , g _1 ^* s _2 \rangle _{\rE _\flat} ) g _0 g _1 \\
= \sum _{g _1 \in \rG} \langle f ^{g _0 } (g _0 ^* s _1 ) , (g _0 g _1 )^* s _2 \rangle _{\rE _\flat} g _0 g _1 .$$
Relabeling $g _2 = g _0 g _1 $ yields the desired result.
\end{proof} 

One extends naturally the inner product $\langle \, , \rangle _{\rE _\flat \rtimes \rG}$
to $\Omega _e (\rE _\flat \rtimes \rG)$,
and defines the notion of adjoint connection by Equation \eqref{CommAd}
(with $\langle \, , \rangle _{\rE _\flat \rtimes \rG}$ in place of $\langle \, , \rangle _{\rE _\flat}$).

\begin{lem}
For any sections $u _1, u _2 \in \Gamma ^\infty _c (\rE ^\bullet _\flat )$, we have
\begin{equation}
\label{AdConn}
(d _\rB + d) \langle u _1 , u _2 \rangle  
= \langle (D _\rB + \nabla ^\rG ) u _1 , u _2 \rangle  
- \langle u _1 , (D' _\rB + \nabla ^\rG ) u _2 \rangle .
\end{equation}
In other words, the adjoint connection of $D _\rB + \nabla ^\rG$ with respect to the 
$\cC ^* (\rB \rtimes \rG )$ valued inner product 
$\langle \, , \rangle _{\rE _\flat \rtimes \rG}$ is $D_\rB ' + \nabla ^\rG$.
\end{lem}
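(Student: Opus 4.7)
The identity to prove is additive in the connection and in the differential, so my plan is to split it into two independent claims, one for the "commutative" piece
$$d_\rB \langle u_1 , u_2 \rangle _{\rE _\flat \rtimes \rG}
= \langle D _\rB u_1 , u_2 \rangle _{\rE _\flat \rtimes \rG}
- \langle u_1 , D' _\rB u_2 \rangle _{\rE _\flat \rtimes \rG},$$
and one for the "group" piece
$$d \langle u_1 , u_2 \rangle _{\rE _\flat \rtimes \rG}
= \langle \nabla^\rG u_1 , u_2 \rangle _{\rE _\flat \rtimes \rG}
- \langle u_1 , \nabla^\rG u_2 \rangle _{\rE _\flat \rtimes \rG}.$$
Adding them yields \eqref{AdConn}. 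Throughout I will use the natural extension of the inner product to $\Omega _e (\rE _\flat \rtimes \rG)$ characterized by $\langle \alpha \otimes s_1, s_2 \rangle = \alpha \star \langle s_1 , s_2 \rangle$ and $\langle s_1, \alpha \otimes s_2 \rangle = \langle s_1 , s_2 \rangle \star \alpha^\sharp$, together with the identity $(dh)^\sharp = - dh^{-1}$ read off from Definition \ref{NonCommForm}.

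For the first piece, I expand $\langle u_1 , u_2 \rangle _{\rE _\flat \rtimes \rG} = \sum_g \langle u_1 , g^* u_2 \rangle_{\rE_\flat} g$ using \eqref{NCInner}. Since $d_\rB$ acts only on the $\Gamma^\infty(\wedge^\bullet T^*\rB)$ factor (leaving the group elements alone), it commutes with the sum; I then apply \eqref{CommAd} term-wise with $s_2 = g^* u_2$. The key observation is that all the ingredients of $D_\rB$ and $D'_\rB$ in Definition \ref{BismutDfn} (namely $d_\rV^{\nabla^\rE}$, $L^{\rE^\bullet_\flat}$, $\iota_\varTheta$, and their adjoints) are manifestly $\rG$-invariant because the metrics, the splitting $T\rM = \rV \oplus \rH$, and the connection $\nabla^\rE$ are $\rG$-invariant. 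Consequently $D'_\rB (g^* u_2) = g^* (D'_\rB u_2)$, and the $\rG$-invariance of $\langle \cdot, \cdot \rangle_{\rE_\flat}$ then lets me pull $g^*$ back across the inner product, recovering $\langle u_1, D'_\rB u_2 \rangle_{\rE_\flat \rtimes \rG}$ after summing over $g$.

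For the group piece, the left-hand side is simply $\sum_{g \ne e} \langle u_1, g^* u_2 \rangle_{\rE_\flat} dg$ since $d$ kills $e$ and sends each other $g$ to $dg$. Using the product rule in $\Omega_c^\bullet(\rB \rtimes \rG)$, a direct expansion yields
$$\langle \nabla^\rG u_1 , u_2 \rangle
= \sum_{h \ne e, g \in \rG} \langle h^*\chi \cdot u_1 , g^* u_2 \rangle_{\rE_\flat} \, dh \cdot h^{-1} g$$
after the substitution $g = h g'$ and application of $\rG$-invariance $h^*\langle s_1,s_2\rangle_{\rE_\flat} = \langle h^* s_1, h^* s_2\rangle_{\rE_\flat}$. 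The calculation of $\langle u_1, \nabla^\rG u_2 \rangle$ produces two terms after applying the product rule $g' \star dh^{-1} = d(g' h^{-1}) - dg' \cdot h^{-1}$ (from Definition \ref{NonCommForm}). The term with $d(g' h^{-1})$ will be reindexed by $g'' := g' h^{-1}$ and then summed over $h \ne e$ using the partition identity $\sum_{h} (g'' h)^*\chi = (g'')^*(\sum_h h^*\chi) = 1$ of \eqref{Partition}; this contributes $\sum_{g'' \ne e}\langle u_1, (g'')^* u_2 \rangle_{\rE_\flat} \, dg''$ minus a $\chi$-weighted remainder. The term with $dg' \cdot h^{-1}$, after a further reindex, will exactly cancel $\langle \nabla^\rG u_1, u_2 \rangle$ except for the diagonal $g' = h$ contribution, which in turn cancels the $\chi$-weighted remainder.

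The main technical obstacle is the bookkeeping in this last paragraph: tracking signs from $(dh)^\sharp = -dh^{-1}$, keeping the two-line product rule for $g' \star dh^{-1}$ consistent, and verifying that after all substitutions the partition identity $\sum_h h^*\chi = 1$ cleanly collapses the remaining $\chi$-weighted pieces into the desired $\sum_{g \ne e} \langle u_1, g^* u_2 \rangle_{\rE_\flat} dg$. Once the cancellations are checked, both pieces combine to give \eqref{AdConn}.
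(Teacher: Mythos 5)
Your proposal is correct and follows the same route as the paper: reduce to the group piece using $\rG$-invariance of $D_\rB, D'_\rB$ together with \eqref{CommAd}, then expand both $\langle\nabla^\rG u_1, u_2\rangle$ and $\langle u_1, \nabla^\rG u_2\rangle$, reindex, and collapse with the partition identity \eqref{Partition}. The paper's version of the second step is marginally slicker — it sums $g_0, g_1$ over all of $\rG$ (harmlessly, since $de = 0$) so that the Leibniz identity $(dg_0)g_1 + g_0\,dg_1 = d(g_0g_1)$ combines the two contributions directly, avoiding the separate tracking of the $h=e$ "$\chi$-weighted remainder" and "diagonal" terms that you outline (your bookkeeping does close up, but it is not needed).
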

\begin{proof}
Since the DeRham differential $d _\rB$ commutes with pull-back, 
it suffices to check
\begin{align*}
\langle \nabla ^\rG u _1 , u _2 \rangle 
=& \sum _{g _0 , g _1 \in \rG } 
d g _0 \star \langle \chi ( g _0 ^{-1} )^* u _1 , g _1 ^* u _2 \rangle _{\rE _\flat} g _1 \\
=& \sum _{g _0 , g _1 \in \rG } \langle (g _0 ^* \chi) u _1 , (g _0 g _1) ^* u _2 \rangle _{\rE _\flat} (d g _0 )  g _1 ,\\
\langle u _1 , \nabla ^\rG u _2 \rangle
=& - \sum _{g _1 \in \rG } 
\langle u _1 , \chi (g _1 ^{-1})^* u _2 \rangle _{\rE _\flat \rtimes \rG} \star d g _1 ^{-1} \\
=& - \sum _{g _0 , g _1 \in \rG } 
\langle u _1 , g _0 ^* (\chi (g _1 ^{-1})^* u _2) \rangle _{\rE _\flat} (g _0 d g _1 ^{-1}), \\
\langle \nabla ^\rG u _1 , u _2 \rangle - \langle u _1 , \nabla ^\rG u _2 \rangle
=& \sum _{g _0 , g _1 \in \rG } \langle (g _0 ^* \chi) u _1 , (g _0 g _1 )^* u _2 \rangle _{\rE _\flat} d (g _0 g _1 )
= d \langle u _1 , u _2 \rangle _{\rE _\flat \rtimes \rG} . \qedhere
\end{align*}
\end{proof}

Summarizing the results in this section, we define:
\begin{dfn}
The (non-commutative) Bismut super-connection on the Bismut bundle is the connection
$$ D := D _\rB + \nabla ^\rG ;$$
its adjoint connection is
$$ D' := D ' _\rB + \nabla ^\rG.$$
\end{dfn}

\subsection{The bundle $\Ker (\varDelta)$}
Define the (fiber-wise) Laplacian operator 
$$\varDelta := \big(d ^{\nabla ^\rE} _\rV + \big(d ^{\nabla ^\rE} _\rV \big)^* \big)^2 .$$
Since $\varDelta $ is fiber-wise, its kernel, $\Ker (\varDelta ) $ is a module over $C ^\infty (\rB)$.
One may also regard $\Ker (\varDelta ) $ as a fiber bundle with typical fiber $\Ker (\varDelta |_{\rZ _x}) $.
Since $\varDelta $ is $\rG$-invariant, 
$\Ker (\varDelta )$ is a contravariant vector bundle.

Denote also respectively by $\Rg ( d ^{\nabla ^{\rE}} _\rV )$ and $\Rg (( d ^{\nabla ^{\rE}} _\rV )^*)$
the image of (the adjoint extension of) $d ^{\nabla ^{\rE}} _\rV $ and $( d ^{\nabla ^{\rE}} _\rV )^*$.
Recall \cite{Lopez;FoliationHeat} that one has Hodge decomposition
$$ \cW _m (\rE) = \Ker (\varDelta ) 
\oplus \overline {\cW _m (\rE) \cap \Rg ( d ^{\nabla ^{\rE}} _\rV )}
\oplus \overline {\cW _m (\rE) \cap \Rg (( d ^{\nabla ^{\rE}} _\rV )^*)} $$
for all Sobolev spaces. 
Let $\varPi _0, \varPi _{d } , \varPi _{d ^*} $ be the projections onto the respective components.
Then $\varPi _0, \varPi _{d} , \varPi _{d ^*} $ are all smooth, bounded, fiber-wise operators.

The Bismut super-connection $D _\rB $ induce a connection on $\Ker (\varDelta)$.
Namely, it is straightforward to verify that 
$$ \varPi _0 L ^{\rE ^\bullet _\flat } \varPi _0  \text{ and } \varPi _0 (L ^{\rE ^\bullet _\flat } )' \varPi _0 $$ 
are both flat connections on $\Ker (\varDelta )$ as a $C ^\infty (\rB)$ module 
(c.f. \cite[Section 3(f)]{Bismut;AnaTorsion}).
Hence by the same arguments as above,
\begin{equation}
\label{KerConn}
\nabla ^{\Ker (\varDelta )} (r)
:= \varPi _0 \big(r L ^{\rE ^\bullet _\flat } + (1 - r) (L ^{\rE ^\bullet _\flat })' + \nabla ^\rG \big) \varPi _0 
\end{equation}
is a connection on $\Ker (\varDelta )$ as a $C ^* _c (\rB \rtimes \rG)$ module. 

We compute the curvature of $\nabla ^{\Ker (\varDelta )} (r) $.
Define
\begin{align*}
\Omega :=& \frac{1}{2} \big((L ^{\rE ^\bullet _\flat })' - L ^{\rE ^\bullet _\flat } \big) \\
L (r) :=& r L ^{\rE ^\bullet _\flat} + (1 - r) (L ^{\rE ^\bullet _\flat})' .
\end{align*}
Since $D _ \rB ^2 = (D _\rB ' )^2 = 0$, it follows that
\begin{align}
\label{Flat}
L ^{\rE ^\bullet _\flat } d ^{\nabla ^\rE} _\rV 
+ d ^{\nabla ^\rE} _\rV L ^{\rE ^\bullet _\flat } =& 0, \\ \nonumber
\big(L ^{\rE ^\bullet _\flat }\big)' (d ^{\nabla ^\rE} _\rV ) ^* +(d ^{\nabla ^\rE} _\rV ) ^* \big(L ^{\rE ^\bullet _\flat }\big)'
=& 0 ,
\end{align}
which imply 
$ \varPi _0 L ^{\rE ^\bullet _\flat } \varPi _{d} 
= \varPi _{d ^*} L ^{\rE ^\bullet _\flat } \varPi _0 
= \varPi _0 (L ^{\rE ^\bullet _\flat })' \varPi _{d ^*} 
= \varPi _{d} (L ^{\rE ^\bullet _\flat } )' \varPi _0 = 0 $.
Direct computation yields
\begin{align*} 
(\nabla ^{\Ker (\varDelta)} (r))^2 
=& (1-r)\varPi_0 (L ^{\rE ^\bullet _\flat })' \varPi_0 (L ^{\rE ^\bullet _\flat })' \varPi_0- 4 r (1 - r) \varPi _0 \Omega \varPi _0 \Omega \varPi _0 
+ \varPi _0 [ L (r) , \nabla ^\rG ] \varPi _0 \\
&+ 2 \varPi _0 (r \Omega \varPi _{d^*} - (1 - r) \Omega \varPi _{d} ) \nabla ^\rG \varPi _0 \\
&+ 2 \varPi _0 \nabla ^\rG (r \varPi _{d} \Omega - (1 - r) \varPi _{d ^*} \Omega ) \varPi _0
+ \varPi _0 \nabla ^\rG \varPi _0 \nabla ^\rG \varPi _0 .
\end{align*}

\begin{dfn}
Let 
$$ e ^{- (\nabla ^{\Ker (\varDelta)} (r))^2}
:= \varPi _0 + \sum _{i =1} \frac{1}{i !} \big(\nabla ^{\Ker (\varDelta)} (r) \big)^{2 i} .$$
The Chern-Simon form for the $\Ker (\varDelta )$ bundle is defined to be
$$
\CS ^{\Ker (\varDelta)} \big(L ^{\rE _\flat ^\bullet }, \big(L ^{\rE _\flat ^\bullet }\big)' \big)
:= - \int _0 ^1 \str _{\Psi} \big( 2\varPi _0 \Omega \varPi _0 e ^{- (\nabla ^{\Ker (\varDelta)} (r))^2} \big) d r ,$$ 
which lies in
$\Omega _{\ell ^2} ^\bullet (\rB \rtimes \rG ) _{\Ab} $ if $\dim \rZ$ is odd, 
and $\widetilde \Omega _{\ell ^2} ^\bullet (\rB \rtimes \rG ) _{\Ab}$ if $\dim \rZ$ is even.
\end{dfn}

\section{Large time limit of the heat kernel}
Denote by $N $ and $N _{\Omega} $ respectively the grading operator on
$\rE ^\bullet := \rE \otimes \wedge ^\bullet \rV'$ and the total horizontal grading on 
$\Omega ^\bullet _e (\rE ^\bullet \rtimes \rG)$.
Let $ D _t $ be the rescaled Bismut super-connection 
$$ D _t := t ^{\frac{1}{2}} t ^{- \frac{N _\Omega}{2}} D t ^{\frac{N _\Omega}{2}}
= t ^{\frac{1}{2}} d ^{\nabla ^{\rE}} _\rV + L ^{\rE ^\bullet _\flat } + \nabla ^\rG 
+ t ^{- \frac{1}{2}} \iota _\varTheta .$$
Its adjoint connection is 
$$ D ' _t = t ^{\frac{1}{2}} (d ^{\nabla ^\rE} _\rV )^* + (L ^{\rE ^\bullet _\flat})'
+ \nabla ^\rG -t ^{- \frac{1}{2}} \varTheta\wedge .$$ 
Define 
$$D _t (r) := r D _t + (1 - r) D ' _t , \quad 0 \leq r \leq 1 .$$
Also, for convenience, we will denote
$$ D (r) := r d ^{\nabla ^{\rE}} _\rV + (1 - r) ( d ^{\nabla ^{\rE}} _\rV )^* .$$
Note that $D (r) ^2 = r (1 - r) \varDelta $.

By Duhamel's expansion, we have
\begin{align}
\label{HeatDfn}
\nonumber
e ^{ - D _t (r) ^2 } := e ^{- r (1 - r) t \varDelta } & \\ 
+ \sum _{n = 1} ^\infty \int _{(s _0 , \cdots , s _k ) \in \Sigma ^n} & e ^ {- s _0  r (1 - r) t \varDelta } 
\star (D _t (r) ^2 - r (1 - r) t \varDelta) \star e ^ {- s _1 r (1 - r) t \varDelta }
\\ \nonumber
& \star \cdots 
\star (D _t (r) ^2 - r (1 - r) t \varDelta) \star e ^{- s _n r (1 - r) t \varDelta} d \Sigma ^n,
\end{align}
where $\Sigma ^n := \{(s _0 , s _1 \cdots , s _n ) \in [0, 1] ^{n + 1} : s _0 + \cdots + s _n = 1 \}$
and $ e ^{- r (1 - r) t \varDelta } $ is the usual fiber-wise heat operator.
Note that the coefficient of each 
$d g _{(k)} $ on the right hand side of \eqref{HeatDfn} is determined by a finite number of terms. 

\begin{rem}
Note that we regard the heat operator and the projection operator $\varPi _0$ as kernels,
as described in Example \ref{SmoothingExam}.
\end{rem}

\subsection{The Novikov-Shubin invariant}

\begin{dfn}
We say that $\rM \to \rB $ has positive Novikov-Shubin invariant if there exist $\gamma > 0 $ and $C_0>0$ such that
for sufficiently large $t$,
$$ \sup _{x \in \rB} \Big\{ \int _{\rZ _x} \chi (x, z) \int _{\rZ _x}
| e ^{- t \varDelta } - \varPi _0 |^2 \mu _x (y) \mu _x (z) \Big\} \leq C _0 t ^{- \gamma }.$$
\end{dfn}

\begin{rem}
\label{SquareNS}
Since $e ^{- \frac{t}{2} \varDelta } - \varPi _0 $ is non-negative, selfadjoint and
$(e ^{- \frac{t}{2} \varDelta } - \varPi _0 ) ^2 = e ^{- t \varDelta } - \varPi _0 $, one has
$$ \sup _{x \in \rB} \Big\{ \int _{\rZ _x} \chi (x, z) \int _{\rZ _x}
| e ^{- \frac{t}{2} \varDelta } - \varPi _0 |^2 \mu _x (y) \mu _x (z) \Big\}
= \| e ^{- t \varDelta } - \varPi _0 \| _\tau .$$
Hence our definition of having positive Novikov-Shubin is equivalent to that of \cite{Schick;NonCptTorsionEst}.
Our argument here is similar to the proof of \cite[Theorem 7.7]{BMZ;ToeplitzTOrsion}.
\end{rem}

In this paper, we will always assume $\rM \to \rB $ has positive Novikov-Shubin invariant.
From this assumption, it follows by integration over $\rB$ that
\begin{equation}
\|  e ^{- t \varDelta } - \varPi _0 \| _{\HS 0} < C t ^{- \gamma },
\end{equation}
as $t\to \infty$.

\subsection{A degree reduction trick}
Rearranging Equation \eqref{Flat}, one has
\begin{equation}
\label{RedEq}
L ^{\rE ^\bullet _\flat } (d ^{\nabla ^\rE} _\rV ) ^* +(d ^{\nabla ^\rE} _\rV ) ^* L ^{\rE ^\bullet _\flat }
= - 2 \Omega (d ^{\nabla ^\rE} _\rV ) ^* - 2 (d ^{\nabla ^\rE} _\rV ) ^* \Omega .
\end{equation}
Moreover, observe that $\Omega $ is a tensor 
(see \cite[Proposition 3.7]{Bismut;AnaTorsion} and \cite{Lopez;FoliationHeat} for explicit formulas for 
$L ^{\rE ^\bullet _\flat} $ and $\Omega $)
and
$(d ^{\nabla ^\rE} _\rV ) + (d ^{\nabla ^\rE} _\rV ) ^* + L ^{\rE ^\bullet _\flat }
+ (L ^{\rE ^\bullet _\flat } ) ^* $
is an elliptic operator.

As a first application of Equation \eqref{RedEq}, recall the main result of \cite[Section 3]{So;CommTorsion}:
\begin{lem}
Suppose the Novikov-Shubin invariant is positive.
The heat operator $e ^{- t \varDelta }$ is $\rG$-invariant, moreover,
$$\big\| (e ^{- t \varDelta } - \varPi _0)(x, y, z) \big\| _{\HS m} (g) = O (t ^{- \gamma }) ,$$
for all $m \in \bbN $ as $t \to \infty$.
\end{lem}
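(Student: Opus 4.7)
The plan is to reduce this estimate to its commutative, $\rG$-invariant predecessor in \cite[Section 3]{So;CommTorsion}. The key observation is that both $e ^{- t \varDelta}$ and $\varPi _0$ are $\rG$-invariant, so by the Remark following Definition \ref{NormDfn}, the function $g \mapsto \| e ^{- t \varDelta} - \varPi _0 \| _{\HS m} (g)$ is constant in $g$ and coincides with the Hilbert-Schmidt type norm of \cite[Definition 1.9]{So;CommTorsion}. The decay estimate is then exactly what is proved in that reference, so no new estimate is needed in the non-commutative setting.

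To justify $\rG$-invariance, first I would observe that since $\nabla ^\rE$ is $\rG$-invariant and the vertical metric on $\rV$ is the lift of the fixed metric on $\rV _0$, both $d ^{\nabla ^\rE} _\rV$ and its formal adjoint $(d ^{\nabla ^\rE} _\rV) ^*$ commute with $g ^*$ for every $g \in \rG$. It follows that $\varDelta$ commutes with the $\rG$-action, and hence so do $e ^{- t \varDelta}$ (via the Duhamel expansion or functional calculus) and $\varPi _0$ (the orthogonal projection onto $\Ker (\varDelta)$, which is defined purely spectrally from $\varDelta$).

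For the reader's convenience, I would then briefly indicate the strategy of the cited proof. One factorizes
$$ e ^{- t \varDelta} - \varPi _0
= e ^{- \varDelta / 2} \star \big( e ^{- (t - 1) \varDelta} - \varPi _0 \big) \star e ^{- \varDelta / 2} , $$
which is valid because $e ^{- s \varDelta} \varPi _0 = \varPi _0$ for every $s \geq 0$ and by the semigroup property. The outer factors $e ^{- \varDelta / 2}$ are smooth, $\rG$-invariant, fiber-wise operators whose operator norms $\| e ^{- \varDelta / 2} \| _{\op l}$ are finite for every $l$ by fiber-wise ellipticity of $\varDelta$ together with the bounded geometry of $\rM$. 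Applying the predecessor of Corollary \ref{Main1} on both sides (transferring covariant derivatives off the kernel and onto the outer, $\op l$-bounded factors) reduces the $\| \cdot \| _{\HS m}$ estimate of the left-hand side to the $\| \cdot \| _{\HS 0}$ estimate of the middle factor, which is in turn delivered by the positive Novikov-Shubin assumption.

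The main obstacle lies in controlling the horizontal covariant derivatives $(\dot \nabla ^{\hat \rE _\flat}) ^i$ of the kernel: vertical derivatives are absorbed into powers of $\varDelta$, but horizontal ones require the operator-to-kernel transfer encoded in Corollary \ref{Main1}. Since that machinery has already been set up in Section 2, and since the $\rG$-invariance of $e^{-t\varDelta}$ and $\varPi_0$ makes the $g$-dependence of $\| \cdot \| _{\HS m} (g)$ trivial, no further technical work is required here.
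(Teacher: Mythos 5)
The paper itself gives no proof: it simply recalls the estimate from \cite[Section 3]{So;CommTorsion}, and your opening paragraph correctly identifies that, once $\rG$-invariance makes $\| \cdot \| _{\HS m}(g)$ constant in $g$, the statement collapses onto its commutative predecessor. Your verification of $\rG$-invariance of $\varDelta$, $e^{-t\varDelta}$ and $\varPi_0$ is also sound. So the citation-level part of the argument is fine.

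The problem is with your sketch of the cited proof. You factorize $e^{-t\varDelta}-\varPi_0 = e^{-\varDelta/2}\star(e^{-(t-1)\varDelta}-\varPi_0)\star e^{-\varDelta/2}$ and claim that the ``predecessor of Corollary~\ref{Main1}'' transfers covariant derivatives off the kernel onto the outer factors, thereby reducing the $\|\cdot\|_{\HS m}$ estimate to the $\|\cdot\|_{\HS 0}$ estimate of the middle factor. This does not work. Corollary~\ref{Main1} is an algebra-type inequality of the form $\|\hat A\psi\|_{\HS m}\lesssim (\sum_{l\le m}\|A\|_{\op l})\,\|\psi\|_{\HS m}$: it controls the $\HS m$ norm of a product in terms of the \emph{same-order} $\HS m$ norm of $\psi$, not of $\|\psi\|_{\HS 0}$. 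More to the point, the horizontal derivative $\dot\nabla^{\hat\rE_\flat}$ is a base-variable derivative, so by Leibniz it hits the middle factor $e^{-(t-1)\varDelta}-\varPi_0$ in your factorization; absorbing the derivatives into the outer $e^{-\varDelta/2}$ factors only works for the vertical $\dot\partial^\bs,\dot\partial^\bt$ derivatives, which act on the external variables $y,z$. For the horizontal ones, your scheme yields exactly $\|\dot\nabla^{\hat\rE_\flat}(e^{-(t-1)\varDelta}-\varPi_0)\|_{\HS 0}$ on the right side, which is what you were trying to estimate: the argument is circular.

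What actually makes the cited result go through is Duhamel's formula for the horizontal derivative of the heat kernel combined with the commutator identity \eqref{RedEq}, which rewrites the commutator of the horizontal connection term with the fiberwise operators $d^{\nabla^\rE}_\rV, (d^{\nabla^\rE}_\rV)^*$ (hence with $\varDelta$) as products of the \emph{bounded} fiberwise tensor $\Omega$ with $D(r)$ (see Lemma~\ref{RedLem}). The extra $D(r)$ factors are then absorbed by the adjacent heat operators, giving the recursion that controls each additional horizontal derivative without increasing the Sobolev order of the unknown. The paper flags this by inserting the lemma immediately after \eqref{RedEq} and calling it ``a first application'' of that equation; that is the key input your sketch is missing.
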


Recall that in \cite{Schick;NonCptTorsionEst}, 
the main observation is that $\frac{1}{2}(D _\rB + D' _\rB)$ is a flat connection, which implies
$$ (D _\rB + D' _\rB)^2 = - (D _\rB - D' _\rB)^2 .$$
Since the r.h.s. is a fiber-wise operator,
one can estimate the size of the rescaled heat kernel,
using known results on fiber-wise estimates.
Here $D (r)$ is {\em not} flat.
Instead we have the following important lemma,
which is another consequence of Equation \eqref{RedEq}:
\begin{lem}
\label{RedLem}
One has the identity:
\begin{align}
D_t (r) ^2
=& tD (r ) ^2 + t ^{\frac{1}{2}} ( \Omega _1 D (r) + D (r) \Omega _2 ) + \Omega _0,
\end{align}
where we denoted
\begin{align*}
\Omega _0 :=& - 4 r (1 - r ) \Omega ^2 
+ [L (r) , t ^{- \frac{1}{2}} (r \iota_\varTheta- (1 - r) \varTheta\wedge ) ] \\
&-r  ( d ^{\nabla ^\rE} _\rV \iota _\varTheta + \iota _\varTheta d ^{\nabla ^\rE} _\rV )
+ (1 - r ) ((d ^{\nabla ^\rE} _\rV ) ^* \varTheta\wedge + \varTheta\wedge (d ^{\nabla ^\rE} _\rV ) ^*) \\
&+ t ^{- 1} (r \iota_{\varTheta } - (1 - r) \varTheta \wedge)^2 
+ [L (r) , \nabla ^\rG ] + (\nabla ^\rG ) ^2 \\
\Omega _1 :=& 2 \Omega ((1 - r) \varPi _{d} - r \varPi _{d ^* }) + \nabla ^\rG 
+ t ^{- \frac{1}{2}} (r\iota_\varTheta-(1-r)\varTheta\wedge) \\
\Omega _2 :=& 2 ((1 - r ) \varPi _{d ^*} - r \varPi _{d}) \Omega + \nabla ^\rG 
+ t ^{- \frac{1}{2}}(r\iota_\varTheta-(1-r)\varTheta\wedge) . 
\end{align*}
\end{lem}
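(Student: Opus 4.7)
The plan is to expand $D_t(r)^2$ directly as a sum over squares and graded commutators of the four odd summands
\[
t^{\frac{1}{2}} D(r),\quad L(r),\quad \nabla^{\rG},\quad t^{-\frac{1}{2}}\varTheta(r),
\]
with $\varTheta(r) := r\iota_\varTheta - (1-r)\varTheta\wedge$, and then to regroup the result by powers of $t$. The $t^1$-term is $tD(r)^2$ by definition; the $t^{\frac{1}{2}}$-piece will be rewritten as $\Omega_1 D(r) + D(r)\Omega_2$, and the remaining $t^0, t^{-\frac{1}{2}}, t^{-1}$ terms will be identified with $\Omega_0$.

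The key step is to reorganize the $t^{\frac{1}{2}}$-piece, which naively reads
\[
[D(r), L(r)] + [D(r), \nabla^\rG] + t^{-\frac{1}{2}}[D(r), \varTheta(r)].
\]
The last two summands are already of the form $(\cdot) D(r) + D(r)(\cdot)$ and supply the $\nabla^\rG$ and $t^{-\frac{1}{2}}\varTheta(r)$ components of $\Omega_1, \Omega_2$. For $[D(r), L(r)]$ I invoke $D_\rB^2 = (D_\rB')^2 = 0$, which at bidegree $(1,1)$ gives $[d^{\nabla^\rE}_\rV, L^{\rE^\bullet_\flat}] = 0 = [(d^{\nabla^\rE}_\rV)^*, (L^{\rE^\bullet_\flat})']$; combined with $(L^{\rE^\bullet_\flat})' = L^{\rE^\bullet_\flat} + 2\Omega$ (equivalent to \eqref{RedEq}) this yields $[D(r), L(r)] = 2r(1-r)[\Omega,\, d^{\nabla^\rE}_\rV - (d^{\nabla^\rE}_\rV)^*]$. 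The Hodge projection identities $\varPi_d d^{\nabla^\rE}_\rV = d^{\nabla^\rE}_\rV = d^{\nabla^\rE}_\rV \varPi_{d^*}$ and $\varPi_{d^*}(d^{\nabla^\rE}_\rV)^* = (d^{\nabla^\rE}_\rV)^* = (d^{\nabla^\rE}_\rV)^* \varPi_d$ (with all other combinations vanishing) then let me factor $D(r)$ out on either side as
\[
[D(r), L(r)] = 2\Omega\big((1-r)\varPi_d - r\varPi_{d^*}\big) D(r) + D(r)\big(2(1-r)\varPi_{d^*} - 2r\varPi_d\big)\Omega,
\]
which combines with the other two summands to give exactly $\Omega_1 D(r) + D(r)\Omega_2$.

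For the $t^0$-piece the only nontrivial identification is $L(r)^2$. Expanding $L(r) = L^{\rE^\bullet_\flat} + 2(1-r)\Omega$ produces $L(r)^2 = r(L^{\rE^\bullet_\flat})^2 + (1-r)((L^{\rE^\bullet_\flat})')^2 - 4r(1-r)\Omega^2$, and the bidegree-$(2,0)$ consequences of $D_\rB^2 = (D_\rB')^2 = 0$ give
\[
(L^{\rE^\bullet_\flat})^2 = -[d^{\nabla^\rE}_\rV, \iota_\varTheta], \qquad ((L^{\rE^\bullet_\flat})')^2 = [(d^{\nabla^\rE}_\rV)^*, \varTheta\wedge].
\]
Together these yield the three ``fiber-wise'' terms of $\Omega_0$. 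The remaining summands $[L(r), \nabla^\rG]$, $[L(r), t^{-\frac{1}{2}}\varTheta(r)]$, $(\nabla^\rG)^2$, and $t^{-1}\varTheta(r)^2$ match $\Omega_0$ literally (recall that for odd operators the graded commutator coincides with the anticommutator).

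The only subtle point, and the main obstacle, is that the naive expansion also contains an orphan summand $t^{-\frac{1}{2}}[\nabla^\rG, \varTheta(r)]$ that is absent from $\Omega_0$, so I must show $[\nabla^\rG, \iota_\varTheta] = [\nabla^\rG, \varTheta\wedge] = 0$. This uses $\rG$-invariance of $\varTheta$ (so $(g^{-1})^*$ commutes with $\iota_\varTheta$ and $\varTheta\wedge$) together with the sign convention of Definition \ref{NonCommForm}: because $\iota_\varTheta$ and $\varTheta\wedge$ are total-degree-$1$ operators acting on the $\wedge^\bullet T^*\rB \otimes \rE$ factor while $dg$ is a degree-$1$ form in the group factor, moving $\iota_\varTheta$ (resp.\ $\varTheta\wedge$) past $dg$ produces a sign $-1$, so
\[
\iota_\varTheta \nabla^\rG u = -\sum_g dg \otimes \chi (g^{-1})^*(\iota_\varTheta u) = -\nabla^\rG \iota_\varTheta u,
\]
and similarly for $\varTheta\wedge$. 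Hence $[\nabla^\rG, \varTheta(r)] = 0$, completing the identification.
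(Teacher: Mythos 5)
Your proof is correct and follows the same basic route as the paper's: expand $D_t(r)^2$, use the flatness of $D_\rB,D'_\rB$ to reduce $L(r)^2$ and invoke Equation \eqref{RedEq} together with the Hodge projection identities to factor $\{D(r),L(r)\}$ through $D(r)$. The one genuinely valuable addition in your writeup is that you notice the expansion of $(L(r)+\nabla^\rG+t^{-1/2}\varTheta(r))^2$ produces an extra cross term $t^{-1/2}\{\nabla^\rG,\varTheta(r)\}$ which does not appear in the stated $\Omega_0$, and you verify it vanishes. The paper's proof jumps from ``one directly computes'' to ``the lemma clearly follows,'' implicitly discarding this term; your argument fills that gap. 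Your justification is sound: $\iota_\varTheta$ and $\varTheta\wedge$ are $C^\infty(\rM)$-linear, $\rG$-invariant, odd operators, so the anticommutator with $\nabla^\rG$ reduces, after the Koszul sign from commuting past $dg$, to $-\sum_g dg\otimes\chi\,\iota_\varTheta((g^{-1})^*u)+\sum_g dg\otimes\chi\,(g^{-1})^*(\iota_\varTheta u)=0$. One can confirm the paper does use this sign convention by inspecting the $-\sum_g dg\,\Lambda_{d_\rH\chi}(g^{-1})^*s$ term in the Lichnerowicz formula \eqref{Lich}, which arises from $\{L(r),\nabla^\rG\}$ under exactly the same convention. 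In short, your proposal is not a different approach but a more careful version of the paper's own argument, and it correctly supplies a step the paper left tacit.
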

\begin{proof}
One directly computes
\begin{align*}
D _t (r) ^2
=& tD (r) ^2 + t ^{\frac{1}{2}}[D (r) , L (r) + \nabla ^\rG 
+ t ^{- \frac{1}{2}} (r \iota _{\varTheta } - (1 - r) {\varTheta}\wedge )] \\ \nonumber
&+ (L (r) + \nabla ^\rG 
+ t ^{- \frac{1}{2}} (r \iota _{\varTheta } - (1 - r) \varTheta\wedge )) ^2 .
\end{align*}
By Equation \eqref{RedEq}, one has 
$$ [D (r) , L (r) ] 
= 2 \Omega ((1 - r) \varPi _{d} - r \varPi _{d ^*}) D (r) 
+ 2 D (r) ((1 - r ) \varPi _{d ^*} - r \varPi _{d}) \Omega ,$$
and since both $D _\rB$ and $D ' _\rB$ are flat,
$$ (L (r) )^2 
= -r( d ^{\nabla ^\rE} _\rV \iota _\varTheta + \iota _\varTheta d ^{\nabla ^\rE} _\rV )
+  (1 - r ) ((d ^{\nabla ^\rE} _\rV ) ^* \varTheta\wedge + \varTheta\wedge (d ^{\nabla ^\rE} _\rV ) ^*)
- 4 r (1 - r) \Omega ^2 .$$
The lemma clearly follows by combining these equations.
\end{proof}

The key observation from Lemma \ref{RedLem} is that 
$\Omega _0 , \Omega _1 , \Omega _2 $ are all smooth fiber-wise operators with respect to the foliation $\rM \to \rB$.

\subsection{The large time estimation of Azzali-Goette-Schick}
In this section,
we follow \cite[Section 4]{Schick;NonCptTorsionEst} to estimate the Hilbert-Schmit norms of 
$$e ^{ - D _t (r) ^2 } \in \Psi^{-\infty}_{\ell^2}(E_{\flat}\rtimes G)$$
(see Lemma \ref{Main3} below).

Let $\gamma ' := 1 - (1 + \frac{2 \gamma }{n+2+2\gamma } ) ^{-1} $,
$\bar r (t) := (r (1 -r) t) ^{- \gamma '}$. Fix $\bar t $ such that $\bar r ( \bar t) < (n + 1 ) ^{-1} $.
Recall that in \cite{So;CommTorsion} the authors proved the following counterparts of \cite[Lemma 4.2]{Schick;NonCptTorsionEst}:
\begin{lem}
For $c = 0, 1, 2, \cdots$, and for all $0 \leq r \leq 1, 0 < r' < 1, r (1 -r) t > \bar t $,
$$ \big\| \big(\sqrt {t} D (r) \big) ^c e ^{- r' r (1 -r) t \varDelta } \big\| _{\op ' m} \dot \leq r ^{\prime - \frac{c}{2}} ;$$
For all $0 \leq r \leq 1, \bar r (t) < r < 1, t > \bar t $, 
\begin{align*}
\big\| e ^{- r' r (1 -r) t \varDelta } - \varPi _0 \big\| _{\HS m} \dot \leq & (r' r (1 -r) t) ^{- \gamma} , \\
\big\| \big(\sqrt {t} D (r) \big) ^c e ^{- r' r (1 -r) t \varDelta } \big\| _{\HS m}
\dot \leq & r ^{\prime - \frac{c}{2}} (r' r (1 -r) t) ^{- \gamma}, \text{ if $c \geq 1$}.
\end{align*}
\end{lem}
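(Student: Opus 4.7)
My plan is to reduce everything to the functional calculus of the non-negative self-adjoint fiber-wise elliptic operator $\varDelta$, combined with the Novikov--Shubin estimate of the preceding lemma and the operator-to-HS comparison of Corollary~\ref{Main1}. Throughout I set $s:=r(1-r)t$ and use $D(r)^{2}=r(1-r)\varDelta=(s/t)\varDelta$.

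For the $\op m$ bound I start with $m=0$. Functional calculus on $\varDelta$ yields
$$\big\|\big(\sqrt t\,D(r)\big)^{c}e^{-r's\varDelta}\big\|_{\op 0}\dot\leq \sup_{\lambda\geq 0}(s\lambda)^{c/2}e^{-r's\lambda}=r'^{-c/2}\sup_{u\geq 0}u^{c/2}e^{-u}\dot\leq r'^{-c/2},$$
via the substitution $u=r's\lambda$; the odd-$c$ case uses $|D(r)|=\sqrt{r(1-r)\varDelta}$. To pass from $\op 0$ to $\op m$, I commute the covariant derivatives $\dot\nabla^{\rE_\flat}$ and $\dot\partial^{\rV}$ past $e^{-r's\varDelta/2}$ using the $C^{\infty}$-bounded geometry of $\rM$ and ellipticity of $\varDelta$; each commutator produces a lower-order fiber-wise operator that is uniformly bounded, so the scalar bound propagates to all Sobolev levels.

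For the HS estimate on $e^{-r's\varDelta}-\varPi_{0}$, the key identity is
$$e^{-r's\varDelta}-\varPi_{0}=e^{-r's\varDelta/2}\bigl(e^{-r's\varDelta/2}-\varPi_{0}\bigr),$$
which holds because $\varPi_{0}$ is the spectral projection onto $\Ker\varDelta$. Applying Corollary~\ref{Main1} with $A=e^{-r's\varDelta/2}$ (a smooth, bounded, $\rG$-invariant fiber-wise operator) and $\psi=(e^{-r's\varDelta/2}-\varPi_{0})(x,y,z)$ gives
$$\|e^{-r's\varDelta}-\varPi_{0}\|_{\HS m}\dot\leq \Big(\sum_{l\leq m}\|e^{-r's\varDelta/2}\|_{\op l}\Big)\|e^{-r's\varDelta/2}-\varPi_{0}\|_{\HS m}.$$
The first factor is uniformly bounded by the $c=0$ instance of the previous paragraph, while the second is $\dot\leq(r's)^{-\gamma}$ by the Novikov--Shubin lemma stated just above (applied at time $r's/2$, comparable to $r's$).

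For the third inequality I exploit $D(r)\varPi_{0}=0$ (harmonic forms are both $d$- and $d^{*}$-closed) to write
$$\big(\sqrt t\,D(r)\big)^{c}e^{-r's\varDelta}=\big(\sqrt t\,D(r)\big)^{c}e^{-r's\varDelta/2}\cdot\bigl(e^{-r's\varDelta/2}-\varPi_{0}\bigr),$$
and bound the first factor in $\op m$ by $\dot\leq r'^{-c/2}$ (step one with $r'/2$ in place of $r'$) and the second in $\HS m$ by $\dot\leq(r's)^{-\gamma}$ (step two); Corollary~\ref{Main1} then yields the desired product estimate. The principal technical obstacle is the $\op 0\to\op m$ upgrade, which requires careful control of commutators of the heat semigroup with the Sobolev multipliers on the non-compact manifold $\rM$; this is the content of the corresponding fiber-wise estimate in \cite{So;CommTorsion}, which can be invoked essentially verbatim here.
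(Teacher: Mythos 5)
The treatment of the third inequality is essentially the paper's argument, and you make it a touch more precise by observing explicitly that $D(r)\varPi_0=0$ justifies replacing the second factor $e^{-r's\varDelta/2}$ by $e^{-r's\varDelta/2}-\varPi_0$ before taking the $\HS m$ norm — a step the paper leaves implicit. The second inequality is, as you say, just the preceding lemma read at time $r'r(1-r)t$; your extra splitting through Corollary~\ref{Main1} is not wrong but is redundant, since the preceding lemma already applies directly.

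The genuine gap is in the first inequality for odd $c$. You appeal to functional calculus on $\varDelta$ and write ``the odd-$c$ case uses $|D(r)|=\sqrt{r(1-r)\varDelta}$,'' but this identity is false: $D(r)=r\,d^{\nabla^\rE}_\rV+(1-r)(d^{\nabla^\rE}_\rV)^*$ is not self-adjoint for $r\neq 1/2$, and
$$D(r)^*D(r)=r^2\,(d^{\nabla^\rE}_\rV)^*d^{\nabla^\rE}_\rV+(1-r)^2\,d^{\nabla^\rE}_\rV(d^{\nabla^\rE}_\rV)^*,$$
which does not equal $r(1-r)\varDelta$. Only $D(r)^2=r(1-r)\varDelta$ holds, so your spectral-calculus argument is valid for even $c$ but breaks down for odd $c$, and your $\op 0$ bound is then unjustified in that case. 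The paper handles odd powers by a different device: it writes $2D(r)$ as a linear combination of the self-adjoint operator $a:=d^{\nabla^\rE}_\rV+(d^{\nabla^\rE}_\rV)^*$ and the skew-adjoint operator $b:=d^{\nabla^\rE}_\rV-(d^{\nabla^\rE}_\rV)^*$, which anti-commute with each other and commute with $\varDelta$; one then expands $D(r)^c$ as a bounded-coefficient sum of $a^k b^{c-k}$, inserts $e^{-\tau\varDelta/2}$ between the two blocks, and bounds each block by spectral calculus (for $a$, and for $ib$) applied to a genuinely self-adjoint operator. This decomposition is the missing idea in your argument. The passage from $\op 0$ to $\op m$ that you sketch via commutators is also not carried out, but there the paper equally delegates to \cite{Lopez;FoliationHeat} and \cite{So;CommTorsion}, so this is a matter of what may legitimately be cited rather than a defect peculiar to your write-up.
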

\begin{proof}
To prove the first equality,
write 
$$ 2 D (r) = (d _\rV + (d ^{\nabla ^\rE} _\rV )^*) - (2 r - 1)(d _\rV - (d ^{\nabla ^\rE} _\rV ) ^*).$$
Clearly $d _\rV + (d ^{\nabla ^\rE} _\rV )^*$ anti-commutes with $d _\rV - (d ^{\nabla ^\rE} _\rV ) ^*$,
and both commute with $\varDelta$.
Therefore $D (r) ^c e ^{- r' r (1 -r) t \varDelta } $ can be written as sum of the form
$$ C (r') (d _\rV + (d ^{\nabla ^\rE} _\rV )^*) ^k  e ^{- \frac{r' r (1 -r) t \varDelta }{2}}
(d _\rV - (d ^{\nabla ^\rE} _\rV ) ^*) ^{c - k} e ^{- \frac{ r' r (1 -r) t \varDelta }{2}},$$
where $k =0, \cdots , c$.
The first inequality then follows form \cite{Lopez;FoliationHeat}.

The second inequality is \cite[Theorem 3.13]{So;CommTorsion}.

To prove the third inequality one writes
$$ D (r) ^c e ^{- r' r (1 -r) t \varDelta } 
= (D (r) ^c e ^{- \frac{r' r (1 -r) t \varDelta }{2}}) e ^{- \frac{r' r (1 -r) t \varDelta }{2}}, $$
then take the $\| \cdot \| _{\op' m}$ norm for the first factor, and $\| \cdot \| _{\HS m}$ for the second.
\end{proof}

We furthermore observe that the arguments leading to the main result 
\cite[Theorem 4.1]{Schick;NonCptTorsionEst} still hold if one replaces the operator and 
$\|\cdot\|_{\tau}$ norm respectively by $\|\cdot\|_{\op' m}$ and $\|\cdot\|_{\HS m}$ for any $m$.

The arguments in \cite[Section 4]{Schick;NonCptTorsionEst} are elementary, so we will only recall some key steps. 
First, one splits the domain of integration
$\Sigma ^n = \bigcup _{I \neq \{ 0, \cdots , n \}} \Sigma ^n _{\bar r , I} $,
where
$$ \Sigma ^n _{\bar r , I} := \{ (r _0 , \cdots , r _n ) 
: r _i \leq \bar r , \Forall i \in I, r _j \geq \bar r, \Forall j \not \in I \}.$$
Then from Equation \eqref{HeatDfn} and grouping terms involving $D (r)$ together, one has
$$ e ^{- D _t (r) ^2} = \sum K (t, n, I, c _0 , \cdots c _n ; a _1 , \cdots a _n ) ,$$
where
\begin{align}
\label{KDfn}
K (t, n, I, c _0 , \cdots c _n ; & a _1 , \cdots a _n ) := \\ \nonumber
\int _{\Sigma ^n _{\bar r , I}} &
(t ^{\frac{1}{2}} D (r) ) ^{c _0} e ^ {- r _0 r (1 - r) t \varDelta } 
\star \Omega _{a _1} \star (t ^{\frac{1}{2}} D (r) ) ^{c _1} e ^ {- r _1 r (1 - r) t \varDelta } \\ \nonumber
& \star \cdots \star \Omega _{a _n}
\star (t ^{\frac{1}{2}} D (r) ) ^{c _n} e ^ {- r _n r (1 - r) t \varDelta } d \Sigma ^n ,
\end{align}
for $c _i = 0, 1, 2, a _j = 0, 1, 2 $.
We follow the proof of \cite[Proposition 4.6]{Schick;NonCptTorsionEst}
(see also \cite[Lemma 4.3]{So;CommTorsion}) to estimate $K (t , n, I, c _0 , \cdots c _n ; a _1 , \cdots , a _n )$.

\begin{rem}
Note that the integrand in \eqref{KDfn}, in particular $\nabla ^\rG$ is not $\Omega ^\bullet _c (\rB \rtimes \rG)$ linear.
However, $\nabla ^\rG$ still satisfies the condition \eqref{Hom3}.
Observe that all results in Sections 3.4 and 3.5 only uses \eqref{Hom3},
therefore they still hold for $K$,
provided we abuse notation and define $\| K \| _{\HS m}$ as in Equation \eqref{HSDfn} whenever $K$ only satisfies \eqref{Hom3} but not necessary 
$\Omega ^\bullet _c (\rB \rtimes \rG)$ linear.
\end{rem}

\begin{lem}
\label{Schick4.6}
Suppose $c_0 , \cdots c_n = 0, 1$.
There exists $\varepsilon>0$ such that as $t \to \infty $, 
\begin{align*}
K (t , n, I, c _0 , \cdots c _n &, a _1, \cdots , a _n ) (x, y, z) \\
=&
\left\{
\begin{array}{ll}
(\frac{1}{n !} \varPi _0 \Omega _{a _1} \varPi _0 \cdots \varPi _0) (x, y, z)+ O (t ^{- \varepsilon })
& \text{ if } I = \emptyset , c _0 , \cdots , c _n = 0 \\
O (t ^{- \varepsilon }) & \text{ otherwise}
\end{array}
\right.
\end{align*}
in the $\| \cdot \| _{\HS m}$-norm.
\end{lem}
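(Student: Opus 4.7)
My plan is to adapt the argument of \cite[Proposition 4.6]{Schick;NonCptTorsionEst} (equivalently \cite[Lemma 4.3]{So;CommTorsion}) to the non-commutative setting, using Theorem \ref{Main4} and Lemma \ref{Main5} in place of the $L^2$--to--$L^2$ and Hilbert-Schmidt estimates employed there. Concretely, I will bound each factor in the integrand of \eqref{KDfn} individually, then combine the factors by iterated application of the two composition inequalities. According to the remark preceding the lemma, these inequalities apply to $\nabla ^\rG$ (and to the tensor pieces $\Omega _0 , \Omega _1 , \Omega _2$) since $\nabla ^\rG$ is of the ``local tensor'' form in Example \ref{LocalTensor} and satisfies \eqref{Hom3}; I will use Lemma \ref{Main5} for these terms and Theorem \ref{Main4} for the genuinely $\rG$-invariant fiber-wise operators sitting inside $\Omega _0 , \Omega _1 , \Omega _2$.

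For a factor indexed by $i \in I$ (so $r _i \leq \bar r$), I apply the operator-norm bound $\| (\sqrt t D (r))^{c _i} e ^{- r _i r (1 - r) t \varDelta} \| _{\op ' m} \dot \leq r _i ^{-c _i / 2}$; integrating over $r _i \in [0, \bar r]$ produces a factor $\bar r ^{1 - c _i / 2} = O ( t ^{- (1 - c _i / 2 ) \gamma '})$. For a factor with $i \notin I$, I decompose $e ^{- r _i r (1 - r) t \varDelta} = \varPi _0 + (e ^{- r _i r (1 - r) t \varDelta} - \varPi _0)$, and control the error by the Hilbert-Schmidt bound giving $(r _i r (1 - r) t ) ^{-\gamma}$. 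The key algebraic fact is that $(\sqrt t D (r)) \varPi _0 = 0$, so whenever $c _i \geq 1$ the $\varPi _0$--piece vanishes and only the (small) error piece survives.

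From these two decompositions the two cases of the lemma follow: if $I \neq \emptyset$, the volume bound $\mathrm{vol}(\Sigma ^n _{\bar r , I}) \dot \leq \bar r ^{|I|}/(n - |I|) !$ combined with the operator-norm estimates along $I$ yields $O (t ^{- \gamma '})$; if $I = \emptyset$ but some $c _i \geq 1$, the vanishing $(\sqrt t D (r)) \varPi _0 = 0$ forces every surviving contribution to contain at least one error factor $e ^{- r _i r (1 - r) t \varDelta} - \varPi _0$, giving $O (t ^{- \gamma})$. In the remaining case $I = \emptyset$ and all $c _i = 0$, the dominant term is obtained by replacing every heat kernel by $\varPi _0$; the integrand becomes the constant $\varPi _0 \Omega _{a _1} \varPi _0 \cdots \varPi _0$, and the integral over $\Sigma ^n _{\bar r , \emptyset}$ equals $(1/n !) \varPi _0 \Omega _{a _1} \varPi _0 \cdots \varPi _0 + O (\bar r)$, while the sum of the error terms (from replacing one heat kernel at a time by $\varPi _0 +$ error and estimating the rest with the operator norm) is again $O (t ^{- \min (\gamma, \gamma ')})$; one may take $\varepsilon := \min (\gamma , \gamma ') / 2$ to absorb all contributions uniformly.

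The main obstacle I expect is bookkeeping uniformity: the composition inequalities of Theorem \ref{Main4} and Lemma \ref{Main5} carry constants depending on $m$, on the finite sets $\rS _m$, and on the $C ^m$-norms of the tensors hidden inside $\Omega _0 , \Omega _1 , \Omega _2$; I need these to be independent of $n$, $r$, $t$, and of the partition $(s _0 , \cdots , s _n)$. The $t ^{-1/2}$ and $t ^{-1}$ coefficients appearing in $\Omega _0 , \Omega _1 , \Omega _2$ help here since for $t \geq 1$ they are harmless, but the $\nabla ^\rG$ and $[L (r) , \nabla ^\rG]$ pieces need the observation---already implicit in Section 3.4---that although $\nabla ^\rG$ is not $\Omega _c ^\bullet (\rB \rtimes \rG)$--linear, it satisfies \eqref{Hom3}, so the $\| \cdot \| _{\HS m}$-formula \eqref{NoPartition} still applies to every $K (t, n, I, c _0 , \cdots , c _n ; a _1 , \cdots , a _n)$ and the bounds propagate.
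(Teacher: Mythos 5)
Your proposal follows essentially the same strategy as the paper's proof: split according to whether $I = \emptyset$ and whether all $c_i = 0$; bound factors indexed by $I$ in the $\|\cdot\|_{\op' m}$ norm and integrate the resulting $r_i^{-c_i/2}$ singularities over $[0,\bar r]$ to gain the small factor $\bar r^{1 - c_i/2}$; and, for the factors with $i \notin I$, either take the $\|\cdot\|_{\HS m}$-norm when $c_i \geq 1$ (which gives $(r_i r(1-r)t)^{-\gamma}$ decay) or, when all $c_i = 0$, decompose $e^{-r_i r(1-r)t\Delta} = \varPi_0 + (\text{error})$. Your key observation $(\sqrt{t}D(r))\varPi_0 = 0$ is correct and is precisely what makes the $\HS$ estimate for $c \geq 1$ decay; the paper uses that estimate directly rather than stating the algebraic fact, but the content is identical. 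You also correctly invoke the remark before the lemma so that Theorem~\ref{Main4} and Lemma~\ref{Main5} apply to the $\nabla^\rG$ pieces inside $\Omega_0, \Omega_1, \Omega_2$ via~\eqref{Hom3}.

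One small point to tighten: in the $I \neq \emptyset$ case you describe bounding the factors along $I$ by operator norm and invoking a volume bound, but you should state explicitly that at least one factor with index $\notin I$ (which exists since $I \neq \{0,\ldots,n\}$ for $t$ large) must be measured in the $\|\cdot\|_{\HS m}$-norm so that the iterated application of Theorem~\ref{Main4}/Lemma~\ref{Main5} produces a finite $\HS$-norm for the composition. The paper does this explicitly by choosing one of the $k_j \notin I$ and taking the $\HS m$-norm of $(t^{1/2}D(r))^{c_{k_j}} e^{-r_{k_j}r(1-r)t\varDelta}$; the choice of which factor depends on whether some $c_{k_q} \geq 1$ (then pick that one to exploit the $t^{-\gamma}$ decay) or all $c_{k_j} = 0$ (then pick any one, which is merely bounded, and the decay comes entirely from integrating over $I$). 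With this filled in, your argument matches the paper's.
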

\begin{proof}
We generalize the proof of \cite[Lemma 4.2]{So;CommTorsion}.

First suppose $I = \emptyset , c _q \geq 1$ for some $q$.
We take the $\| \cdot \|_{\HS m} $ norm of the $(t ^{\frac{1}{2}} D (r) )^{c _q} e ^ {- r _q r (1 - r) t \varDelta }$ term.
Since $\Omega _{a _i}$ are $C ^\infty $ bounded tensors with bounds independent of $t$
by Theorem \ref{Main4} and Lemma \ref{Main5}, $\| \cdot \| _{\HS m} $ of the integrand in \eqref{KDfn} is bounded,
for some constants $C _{a _i}$ independent of $t$, by
\begin{align*}
\big\| (t ^{\frac{1}{2}} D (r) ) ^{c _0} e ^ {- r _0 r (1 - r) t \varDelta } & \big\| _{\op' m} 
C _{a _1} \cdots C _{a _q} \\
\big\| (t ^{\frac{1}{2}} D (r) ) ^{c _q} e ^ {- r _q r (1 - r) t \varDelta } \big\| _{\HS m} &
C _{a _{q + 1}} \cdots \big\| (t ^{\frac{1}{2}} D (r) ) ^{c _n} e ^ {- r _n r (1 - r) t \varDelta } \big\| _{\op' m} \\
\dot \leq & r _0 ^{- \frac{c _0}{2}} 
\cdots r _q ^{- \frac{c _q}{2}} (r _q r (1 - r) t ) ^{- \gamma }
\cdots r _n ^{- \frac{c _n}{2}} \\
\dot \leq & \bar r ^{- \frac{n}{2} - \gamma } t ^{- \gamma }.
\end{align*}
Integrating, we have the estimate
\begin{align*}
\Big\| K (t , n, I, c _0 , \cdots c _n ; a _1 , \cdots a _n ) \Big\| _{\HS m}
\leq & C' _m t ^{- \gamma + \gamma' (\frac{n}{2} + \gamma) } \int d \Sigma ^n ,
\end{align*}
which is $O ((r (1 - r)t) ^{- \varepsilon})$ with $\varepsilon = \gamma (1 - \frac{n + 2 \gamma}{n + 2 + 2 \gamma})$.

Next, suppose $I=\emptyset$ and $c_i=0$ for all $i$. 
Write $e^{-r_i t r (1 - r) \varDelta} = (e^{-r_i t r (1 - r) \varDelta} - \varPi _0 ) + \varPi_0 $, 
and split the integrand in \eqref{KDfn} into $2^{n+1}$ terms. 
If any term contains a $e^{-r_i t\Delta}-\Pi_0$ factor, 
similar arguments as in the first case shows that it is $O((r (r (1 - r)t)^{-\gamma})$. 
Hence the only term that dose not converge to $0$ is 
$$(\varPi_0 \Omega _{a_1} \varPi_0 \cdots \varPi_0 )(x, y, z).$$
Since the volume of $\Sigma^{n}_{\bar{r}(t),I}$ converges to $\frac{1}{n!}$ as $t\to \infty$, the claim follows. 

It remains to consider the case when $I$ is non-empty. 
For $t$ sufficiently large $I\neq \{0,\cdots,n\}$. 
Write $I=\{i_1,\cdots,i_s\}$, $\{0,\cdots,n\}\setminus I=:\{k_1,\cdots,k_{s'}\}\neq \emptyset$.
If $k_1,\cdots,k_{s'} = 0$, take $\|\cdot\|_{\HS m}$-norm for $(t^{1\over 2} D_0) ^{c _{k _1}} e^{-r_ {k _1} r (1 -r) t \Delta}$ term.
Then 
\begin{align*}
\big\| K(t,n, I, c_0,\cdots,c_n;a_1,\cdots,a_n) (x, y, z) \big\|_{\HS m} & \\
\dot \leq \int_{0}^{\bar{r}(t)}\cdots \int_{0}^{\bar{r}(t)}
\Big(\int_{\{(r_{k_1},\cdots,r_{k_s'}):(r_0,\cdots,r_n)\in\Sigma^{n}_{\bar{r}(t),I}\}} & 
r_0^{-{{c_{i _1}}\over 2}} \cdots r_n^{-{{c_ {i _s}}\over 2}} \\
& d(r_{k_1}\cdots r_{k_{s'}})\Big) dr_{i_1}\cdots d r_{i_s}.
\end{align*}
Since $\int_{0}^{\bar{r}(t)}r_i^{{c_i}\over 2} d r_i  =O( (r (1 - r)t)^{-\gamma'(1-{{c_i}\over 2})})$;
while the integral over the variables $r _{k_1}, \cdots, r_{k_{s'}}$ is bounded.

If there is some $c _{k _q} \geq 1$, 
we take the $\| \cdot \|_{\HS m} $ norm of the $(t ^{\frac{1}{2}} D (r) )^{c _{k_q}} e ^ {- r _{k _q} r (1 - r) t \varDelta }$ term,
and the claim follows by similar arguments as the first case.
\end{proof}

One then turns to the case when some $c _i = 2$.
If $I$ and $J$ are disjoint subsets of $\{ 0, \cdots , n \}$ with $I = \{ i _1, \cdots , i _s \}$,
and $\{ 0, \cdots , n \} \setminus (I \cup J) =: \{ k _0, \cdots , k _q \} \neq \emptyset $,
denote by
$$ \Sigma ^n _{\bar r , I, J}
:= \{ (r _0 , \cdots , r _n) \in \Sigma ^n _{\bar r , I} : r _j = \bar r (t), \text{ whenever } j \in J \} ,$$
and define for any smooth, bounded $\Omega _c ^* (\rM \rtimes \rG )$-linear operators $B _1 , \cdots B _n$
\begin{align*}
K (t, n, I, J, c _0 , \cdots c _n &; B _1 , \cdots B _n ) \\
:= \int _0 ^{\bar r (t)} \cdots \int _0 ^{\bar r (t)} &
\int _{ \{ (r _{k _0}, \cdots r _{k _q}) : (r _0 , \cdots , r _n) \in \Sigma ^n _{\bar r , I} \}} 
(t ^{\frac{1}{2}} D (r) ) ^{c _0} e ^ {- r _0 r (1 - r) t \varDelta } \\
& \prod _{i=1} ^n ( B _i (t ^{\frac{1}{2}} D (r) )^{c _i} e ^ {- r _i r (1 - r) t \varDelta })
\Big| _{\Sigma ^n _{\bar r , I, J}}
d ^q (r _{k _0}, \cdots r _{k _q}) d r _{i _1} \cdots d r _{i _s} .
\end{align*}
Suppose for some $i _p \in I, c _{i _p} = 2$,
then one has the integration by parts formula \cite[Equation (4.17)]{Schick;NonCptTorsionEst}:
\begin{align}
\label{IntPart}
\nonumber
K(t, &n, I , J; \cdots, c _{i _p}, \cdots , c _{k _0}, \cdots ; \cdots , B _{i_p}, B _{i_p +1}, \cdots ) \\
=& \left\{
\begin{array}{ll}
K(t, n, I \setminus \{i _p \}, J \cup \{ i _p \} 
; \cdots, 0, \cdots , c _{k _0}, \cdots ; \cdots , B _{i_p} , B _{i_p +1}, \cdots ) \\
- K(t, n - 1, I \setminus \{i _p \}, J; \cdots, \cdots, c _{k _0}, \cdots ; \cdots , B _{i _p} B _{i _p +1}, \cdots ) \\
+ K(t, n, I , J \cup \{ k _0 \} ; \cdots , 0, \cdots , c _{k _0}, \cdots ;
\cdots , B _{i _p} , B _{i _p +1}, \cdots ) \\
+ K(t, n, I, J; \cdots , 0, \cdots, c _{k _0} + 2, \cdots ;
\cdots, B _{i _p} , B _{i _p +1}, \cdots ) \text{ if $q > 0$,}\\
K(t, n, I \setminus \{i _p \}, J \cup \{ i _p \} ; \cdots, 0, \cdots , c _{k _0}, \cdots ; 
\cdots , B _{i_p} , B _{i_p + 1}, \cdots ) \\
- K(t, n - 1, I \setminus \{i _p \}, J; \cdots, \cdots, c _{k _0}, \cdots ; \cdots , B _{i_p} B _{i_p +1}, \cdots ) \\
+ K(t, n, I, J; \cdots , 0, \cdots, c _{k _0} + 2, \cdots ;
\cdots, B _{i _p} , B _{i _p +1}, \cdots ) \text{ if $ q = 0$}.
\end{array}
\right.
\end{align}
We remark that the proof of \cite[Equation (4.17)]{Schick;NonCptTorsionEst} does not involve any norm,
therefore we omit the details here.

On the other hand one has the following straightforward generalization of Lemma \ref{Schick4.6}
(compare \cite[Proposition 4.7]{Schick;NonCptTorsionEst}):
\begin{lem}
\label{Schick4.7}
Suppose $c _i = 0, 1$ for all $ i \in I$.
There exists $\varepsilon > 0 $ such that as $t\to \infty$
\begin{align*}
\big\| K (t , n, I, J, c _0 , \cdots c _n ; a _1 , \cdots , a _n ) 
- ((n - |&J|)!) ^{-1} \varPi _0 \Omega _{a _1} \varPi _0 \cdots \Omega _{a _n} \varPi _0 \big\| _{\HS m} \\
=& O ( (r (1 - r) t) ^{- \varepsilon }) 
\text{ if $I = \emptyset , c _0 , \cdots , c _n = 0 $}; \\
\big\| K (t , n, I, J, c _0 , \cdots c _n ; a _1 , \cdots , a _n ) \big\| _{\HS m} 
=& O ( (r (1 - r) t) ^{- \varepsilon }) \text{ otherwise.}
\end{align*}
\end{lem}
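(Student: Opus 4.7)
The plan is to follow the proof of Lemma \ref{Schick4.6} almost verbatim, keeping track of the additional constraints imposed by the set $J$. The key point is that for every $j \in J$ the variable $r_j$ is frozen at $\bar r(t)$, which still satisfies $r_j\, r(1-r) t \geq \bar r(t)\, r(1-r) t = (r(1-r) t)^{1-\gamma'} \to \infty$, so the Novikov--Shubin estimate and the bounds on $(t^{1/2}D(r))^c e^{-r_i r(1-r) t \varDelta}$ from the preceding lemma remain applicable to the frozen factors. The only combinatorial change from Lemma \ref{Schick4.6} is that the volume of integration equals that of a simplex of dimension $n-|J|$, contributing the factor $((n-|J|)!)^{-1}$ instead of $(n!)^{-1}$.

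For the principal case $I=\emptyset$ and $c_0=\cdots=c_n=0$, I would split each heat factor as $e^{-r_i r(1-r) t \varDelta} = (e^{-r_i r(1-r) t \varDelta} - \varPi_0) + \varPi_0$ and expand into $2^{n+1}$ terms. The pure projection term $\varPi_0 \Omega_{a_1} \varPi_0 \cdots \Omega_{a_n} \varPi_0$ is independent of $(r_0,\ldots,r_n)$, so integrating over $\Sigma^n_{\bar r, I, J}$ just gives the volume $((n-|J|)!)^{-1} + O(\bar r(t))$. For every other term, at least one factor $(e^{-r_i r(1-r) t \varDelta} - \varPi_0)$ appears; taking its $\|\cdot\|_{\HS m}$-norm and the $\|\cdot\|_{\op' m}$-norm of every other factor, and applying Theorem \ref{Main4} and Lemma \ref{Main5} to absorb the bounded tensors $\Omega_{a_i}$ (or, more precisely, the summands constituting them --- see the remark about $\nabla^\rG$ preceding Lemma \ref{Schick4.6}), yields a bound of order $(r(1-r) t)^{-\gamma}$ times the volume of integration, hence $O((r(1-r) t)^{-\varepsilon})$ for a suitable $\varepsilon > 0$.

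In the remaining cases ($I \neq \emptyset$, or some $c_i \geq 1$), I follow the same dichotomy as in Lemma \ref{Schick4.6}. If some $c_q \geq 1$ with $q \notin J$, take the $\|\cdot\|_{\HS m}$-norm of the corresponding factor $(t^{1/2}D(r))^{c_q} e^{-r_q r(1-r) t \varDelta}$, which is bounded by $r_q^{-c_q/2} (r_q r(1-r) t)^{-\gamma}$, and the $\|\cdot\|_{\op' m}$-norm of every other factor, which contributes $\prod_{i \neq q} r_i^{-c_i/2}$. Integrating the $r_i$ with $i \in I\setminus\{q\}$ from $0$ to $\bar r(t)$ yields $\bar r(t)^{1-c_i/2}$ factors (using $c_i \leq 1$ for $i \in I$), while the integration over the $r_i$ with $i \notin I \cup J$ is bounded; multiplying together gives the required $O((r(1-r)t)^{-\varepsilon})$ estimate. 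If every $c_q$ with $q \notin J$ is zero but some $c_j \geq 1$ with $j \in J$, we instead take the $\|\cdot\|_{\HS m}$-norm of the $j$-th factor (noting $r_j = \bar r(t)$ gives a fixed, controlled size) and the $\|\cdot\|_{\op' m}$-norms of the others, recovering the same bound.

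The main obstacle is purely bookkeeping: ensuring that the various factors of $\bar r(t)^{1-c_i/2}$, the volume $\bar r(t)^{|I|}$ from integration on $[0,\bar r(t)]^{|I|}$, and the decay $(r(1-r) t)^{-\gamma}$ combine to give a strictly negative overall power of $t$ (which determines $\varepsilon$). Since $\bar r(t) = (r(1-r) t)^{-\gamma'}$ with $\gamma' < 1$ chosen precisely so that $\gamma - \gamma'(n/2 + \gamma) > 0$, this balance works out as in \cite[Proposition 4.7]{Schick;NonCptTorsionEst} and \cite[Lemma 4.3]{So;CommTorsion}; no new conceptual ingredient is required beyond the estimates already collected, together with Theorem \ref{Main4} and Lemma \ref{Main5}, which let us freely insert the factors $\Omega_{a_i}$ (whose $\nabla^\rG$-components satisfy the linearity condition \eqref{Hom3}) into the product without disturbing any of the Hilbert--Schmidt bounds.
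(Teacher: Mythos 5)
Your proposal is correct and follows exactly the approach the paper intends: the paper gives no proof for Lemma~\ref{Schick4.7}, merely calling it a ``straightforward generalization of Lemma~\ref{Schick4.6}'' (referring the reader to \cite[Proposition 4.7]{Schick;NonCptTorsionEst}), and what you have written is precisely that generalization, with the $J$-frozen variables handled via the observation that $\bar r(t)\, r(1-r)t = (r(1-r)t)^{1-\gamma'}\to\infty$ so the existing estimates still apply, and the volume factor correctly replaced by $((n-|J|)!)^{-1}$. One small bookkeeping remark: in your dichotomy of ``remaining cases'' you explicitly treat only (a) some $c_q\geq 1$ with $q\notin J$ and (b) all $c_q=0$ off $J$ but some $c_j\geq 1$ on $J$; the subcase $I\neq\emptyset$ with every $c_i=0$ is left implicit under ``the same dichotomy as in Lemma~\ref{Schick4.6}'' (where the decay comes solely from the $\bar r(t)^{|I|}$ factor produced by integrating the $I$-variables, with the $\|\cdot\|_{\HS m}$-norm of a non-$I$ heat factor merely bounded), and it would be cleaner to state this explicitly; otherwise everything checks out.
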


Thus the term $ K (t, n, I, c _0 , \cdots c _n ; a _1 , \cdots a _n ) $ converges to $0$ unless
$$ c _i = 0 \text{ whenever } i \in I , \quad c _i = 2 \text { whenever } i \not \in I. $$
Whenever $c _i = 2 $ and $i \in I$, the corresponding part of the integrand in such a term is of the form
\begin{equation}
\label{Word1}
\cdots e ^{- r _{i-1} r (1 - r) t \varDelta } \star \Omega _1 
\star t D (r) ^2 e ^{- r _{i} r (1 - r) t \varDelta } \star \Omega _2 
\star e ^{- r _{i+1} r (1 - r) t \varDelta } \cdots ;
\end{equation}
on the other hand if $i -1, i \not \in I$, 
then the corresponding part of the integrand is of the form
\begin{equation}
\label{Word2}
\cdots e ^{- r _{i-1} r (1 - r) t \varDelta } \star \Omega _0 
\star e ^{- r _{i} r (1 - r) t \varDelta } \cdots .
\end{equation}
By Equation \eqref{IntPart} and Lemma \ref{Schick4.7}, for each fixed $0 < r < 1$,
\begin{align*}
\cdots e ^{- r _{i-1} r (1 - r) t \varDelta } & \star \Omega _1
\star t D (r) ^2 e ^{- r _{i} r (1 - r) t \varDelta } \star \Omega _2
\star e ^{- r _{i+1} r (1 - r) t \varDelta } \cdots \\
=& \cdots \varPi _0 \star (2 \Omega ((1 - r) \varPi _{d} - r \varPi _{d ^*}) + \nabla ^\rG ) \\
& \star (\varPi _0 - \id) \star (2 ((1 - r ) \varPi _{d ^*} - r \varPi _{d}) \Omega + \nabla ^\rG ) 
\star \varPi _0 \cdots \\
=& \cdots \varPi _0 \big( -(\nabla ^{\Ker (\varDelta )} )^2 - \Omega _ 0 \big) \varPi _0 \cdots
\end{align*}
modulo terms of $O ((r (1 - r) t ) ^{- \varepsilon})$.

One then proceeds as \cite[Section 4.5]{Schick;NonCptTorsionEst} to compute the limit of $e ^{- D _t (r) ^2}$ as $t \to \infty$.
Since $K (t , n, I, c _0 , \cdots c _n ; a _1 , \cdots a _n )$ is of non-commutative degree at least $n - \dim \rB$,
therefore given any degree, $e ^{- D _t (r)^2 }$ is determined by a finite number of terms.
Moreover, we have seen $ K (t , n, I, c _0 , \cdots c _n ; a _1 , \cdots a _n )$ converge to its limit with an error of 
$O ((r (1 -r) t)^{- \varepsilon _n})$ (note that the rate of convergence depends on $n$).

To simplify notation, we denote
\begin{nota}
Given a sequence of positive numbers $\{ \gamma _n \}$,
and a family of kernels $\psi (t) \in \Psi ^{- \infty } _{\ell ^2 } (\rM \times _\rB \rM), t \in (0, \infty )$,
we write
$$ \psi (t) = \dot O (t ^{- \{\gamma _n \}}) $$
if the degree $n$ component of $\psi $ is $O (t ^{- \gamma _n})$ in the $\| \cdot \| _{\HS m} $ norm for all $m$.
\end{nota}

Summing over all $ K (t , n, I, c _0 , \cdots c _n ; a _1 , \cdots a _n )$, one gets:
\begin{lem}
\label{Main3}
For all $0 < r < 1$, as $t\to \infty$,
$$ \big\| e ^{- D _t (r) ^2} - e ^{ -(\nabla ^{\Ker (\varDelta )}(r) )^2} \big\| _{\HS m} 
= \dot O ((r (1 - r) t ) ^{- \{\varepsilon _n \}}),$$
for some sequence $\{ \varepsilon _n \}$. 
\end{lem}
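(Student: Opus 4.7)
The plan is to substitute Lemma \ref{RedLem} into the Duhamel expansion \eqref{HeatDfn} and then carry out the term-by-term large-time analysis in the spirit of \cite[Section 4.5]{Schick;NonCptTorsionEst}, replacing the trace-norm estimates there by $\|\cdot\|_{\HS m}$ estimates via Theorem \ref{Main4} and Lemma \ref{Main5}. Since $tD(r)^2 = r(1-r)t\varDelta$, Lemma \ref{RedLem} gives
$$D_t(r)^2 - r(1-r)t\varDelta = t^{1/2}\bigl(\Omega_1 D(r) + D(r)\Omega_2\bigr) + \Omega_0,$$
so each insertion in \eqref{HeatDfn} becomes a sum indexed by $a_i\in\{0,1,2\}$ with an attached $D(r)$ on the left or right that may be absorbed into an adjacent heat factor. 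After splitting $\Sigma^n = \bigcup_I \Sigma^n_{\bar r, I}$, the expansion rearranges into $\sum K(t,n,I,c_0,\dots,c_n;a_1,\dots,a_n)$ exactly as in \eqref{KDfn}.

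Next I would observe that $K(t,n,I,c_0,\dots,c_n;a_1,\dots,a_n)$ has non-commutative degree at least $n - \dim \rB$: the $\nabla^\rG$ pieces inside $\Omega_0, \Omega_1, \Omega_2$ each raise the $\Omega_c^\bullet(\rB\rtimes\rG)$-degree by one, and only a bounded number of them can be consumed by the (fixed) horizontal dimension of $\rB$. Hence in each fixed total degree only finitely many terms contribute, and it suffices to establish the $\|\cdot\|_{\HS m}$ estimate degree-wise. I would then apply the integration-by-parts identity \eqref{IntPart} repeatedly to eliminate every occurrence of $c_i = 2$ with $i \in I$. By Lemma \ref{Schick4.7}, each boundary or $n-1$-term so produced is $O((r(1-r)t)^{-\varepsilon})$, so in the limit only terms with $c_i = 0$ for $i \in I$ and $c_i = 2$ for $i \notin I$ survive.

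For these surviving configurations I would invoke the displayed computation just before the lemma: any sub-block of the form $\Omega_1 \star tD(r)^2 e^{-r_i r(1-r)t\varDelta} \star \Omega_2$ (flanked by $\varPi_0$'s coming from adjacent heat factors whose $i\in I$ neighbours produce $\varPi_0$ in the limit by Lemma \ref{Schick4.7}) converges modulo $O((r(1-r)t)^{-\varepsilon})$ to $\varPi_0\bigl(-(\nabla^{\Ker(\varDelta)}(r))^2 - \Omega_0\bigr)\varPi_0$, whereas any $\Omega_0$ insertion at $i \in I$ between $\varPi_0$'s contributes $\varPi_0 \Omega_0 \varPi_0$. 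Summing over all choices of $I$ of size $s$ within an $n$-fold integrand and using the combinatorial identity for the Duhamel expansion, the unwanted $\pm \Omega_0$ contributions pair up and cancel, and what remains is precisely the Duhamel series
$$\varPi_0 + \sum_{n\geq 1} \frac{(-1)^n}{n!}\bigl(\varPi_0 (\nabla^{\Ker(\varDelta)}(r))^2 \varPi_0\bigr)^n = e^{-(\nabla^{\Ker(\varDelta)}(r))^2}.$$
Taking the slowest-decaying exponent $\varepsilon_n$ among the finitely many contributing terms in each degree then gives the claimed $\dot O$ estimate.

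The main obstacle is the combinatorial bookkeeping in the last step: one must verify carefully that the $\Omega_0$ contributions from the two types of blocks indeed cancel once one sums over all configurations of $I$ and all positions where $c_i = 2$. This is the place where the apparently spurious $-\Omega_0$ inside $-(\nabla^{\Ker(\varDelta)}(r))^2 - \Omega_0$ is dispatched. Apart from that, the estimates are an essentially routine adaptation of \cite[Section 4.5]{Schick;NonCptTorsionEst}, with Theorem \ref{Main4} and Lemma \ref{Main5} playing the role of the operator-norm inequalities used there and the degree-filtration ensuring that only finitely many $K$-terms matter in each component of $\Omega_{\ell^2}^\bullet(\rB\rtimes\rG)_{\Ab}$.
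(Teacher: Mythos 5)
Your proposal is correct and follows essentially the same route as the paper: substitute the decomposition of Lemma \ref{RedLem} into the Duhamel series \eqref{HeatDfn}, split the simplex into the pieces $\Sigma^n_{\bar r, I}$, estimate the resulting $K$-terms in $\|\cdot\|_{\HS m}$ via Theorem \ref{Main4}, Lemma \ref{Main5}, and Lemmas \ref{Schick4.6}--\ref{Schick4.7}, use the integration-by-parts identity \eqref{IntPart} to clear the $c_i=2$ factors with $i\in I$, identify the surviving limits with $\varPi_0\bigl(-(\nabla^{\Ker(\varDelta)}(r))^2-\Omega_0\bigr)\varPi_0$ and $\varPi_0\Omega_0\varPi_0$, invoke the combinatorial cancellation of the $\Omega_0$ pieces as in \cite[Section 4.5]{Schick;NonCptTorsionEst}, and use the degree bound $\geq n-\dim\rB$ to reduce to finitely many terms per degree. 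The paper condenses the combinatorial cancellation step into a citation, which you spell out explicitly, but the underlying argument is the same.
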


Next, we turn to study the large time limit of 
$$ (D_t - D ' _t) e ^{- D _t (r) ^2} .$$
From Equation \eqref{HeatDfn} one has
$$ e ^{- D _t (r) ^2} = \sum K' (t, n, I, c _0 , \cdots c _n ; a _1 , \cdots a _n ) ,$$
where
\begin{align*}
K' (t, n, I, c _0 , \cdots c _n ; & a _1 , \cdots a _n ) := \\
\int _{\Sigma ^n _{\bar r , I}} &
(D _t - D' _t) (t ^{\frac{1}{2}} D (r) ) ^{c _0} e ^ {- r _0 r (1 - r) t \varDelta } 
\star \Omega _{a _1} \star (t ^{\frac{1}{2}} D (r) ) ^{c _1} e ^ {- r _1 r (1 - r) t \varDelta } \\
& \star \cdots \star \Omega _{a _n}
\star (t ^{\frac{1}{2}} D (r) ) ^{c _n} e ^ {- r _n r (1 - r) t \varDelta } d \Sigma ^n ,
\end{align*}
for $c _i = 0, 1, 2, a _j = 0, 1, 2 $.
For $0 < r < 1$, write
$$D _t - D' _t 
= t ^{ \frac{1}{2}} (r ^{-1} \varPi _d - (1 - r) ^{-1} \varPi _{d ^*} ) D (r)
+ 2 \Omega + t ^{- \frac{1}{2}} (\iota _\Theta+\Theta \wedge) .$$
It is clear that $K'$ is essentially of the same form as $K$,
therefore the same arguments as above apply.
We conclude that
$K' (t, n, I, c _0 , \cdots c _n ; a _1 , \cdots a _n ) $ is 
$O ((r (1 - r)t) ^{- \varepsilon})$ unless $K' (t, n, I, c _0 , \cdots c _n ; a _1 , \cdots a _n )$ equals
\begin{align*}
\int _{\Sigma ^n _{\bar r , I}} &
(2 \Omega + t ^{- \frac{1}{2}} ( \iota _\Theta+\Theta \wedge))
 e ^ {- r _0 r (1 - r) t \varDelta } \Omega _0 e ^ {- r _1 r (1 - r) t \varDelta } \cdots d \Sigma ^n 
\text{ or} \\
\int _{\Sigma ^n _{\bar r , I}} &
(r ^{-1} \varPi _d - (1 - r) ^{-1} \varPi _{d ^*} ) (t D (r) ^2)
e ^ {- r _0 r (1 - r) t \varDelta } \Omega _2 e ^ {- r _1 r (1 - r) t \varDelta } \cdots d \Sigma ^n ,
\end{align*} 
where for $i \geq 1$, $ c _i = 0 $ whenever $i \in I$, $c _i = 2 $ whenever $ i \not \in I $.
One has 
\begin{align*}
(r ^{-1} \varPi _d - (1 - r) ^{-1} \varPi _{d ^*} ) (t D (r) ^2)
& e ^ {- r _0 r (1 - r) t \varDelta } \Omega _2 e ^ {- r _1 r (1 - r) t \varDelta } \\
=& 2 (\varPi _0 - \id) \Omega + (r ^{-1} \varPi _d - (1 - r) ^{-1} \varPi _{d ^*} ) \star \nabla ^\rG 
\end{align*}
modulo terms of $ \dot O ((r (1 - r)t) ^{- \{\varepsilon _n \}})$. It follows that
\begin{lem}
\label{Main6}
For all $0 < r < 1$, as $t\to\infty$,
\begin{align*}
\Big\| (D _t - D' _t) e ^{- D _t (r) ^2} 
- (2 \varPi _0 \Omega + (r ^{-1} \varPi _d - (1 - r) ^{-1} \varPi _{d ^*} ) & \star \nabla ^\rG)
\star e ^{ -(\nabla ^{\Ker (\varDelta )}(r) )^2} \Big\| _{\HS m} \\
=& \dot O ((r (1 - r) t ) ^{- \{\varepsilon _n \} }).
\end{align*}
\end{lem}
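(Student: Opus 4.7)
The plan is to mirror the proof of Lemma \ref{Main3}, tracking the extra left factor $D_t - D'_t$ through the same Duhamel expansion and Hilbert-Schmidt bookkeeping. First, I would expand
$$(D_t - D'_t) e^{-D_t(r)^2} = \sum K'(t, n, I, c_0, \ldots, c_n; a_1, \ldots, a_n)$$
exactly as in \eqref{HeatDfn}, with the only change being the insertion of $D_t - D'_t$ before the first heat kernel. Then I would use the decomposition
$$D_t - D'_t = t^{\frac{1}{2}}\big(r^{-1}\varPi_d - (1-r)^{-1}\varPi_{d^*}\big)D(r) + 2\Omega + t^{-\frac{1}{2}}(\iota_\varTheta + \varTheta\wedge),$$
which mirrors the three-part structure of Lemma \ref{RedLem} and splits each $K'$ into three sub-families indexed by which piece appears on the left.

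The second step is to run the estimates of Lemmas \ref{Schick4.6} and \ref{Schick4.7} for each sub-family. Since $K'$ differs from $K$ only by a bounded fiber-wise multiplier (once the $D(r)$ is extracted), Theorem \ref{Main4} and Lemma \ref{Main5} apply and the same vanishing dichotomy holds: a $K'$-term converges to zero in $\| \cdot \|_{\HS m}$ at rate $(r(1-r)t)^{-\varepsilon}$ unless $c_i = 0$ for all $i \in I$ and $c_i = 2$ for all $i \notin I$. The $t^{-\frac{1}{2}}(\iota_\varTheta + \varTheta\wedge)$ piece is wiped out by its explicit pre-factor, and the $2\Omega$ piece, after being flanked on the right by $\varPi_0$ inside the surviving configurations, contributes the first summand $2\varPi_0 \Omega$.

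The key step is the $t^{\frac{1}{2}}(r^{-1}\varPi_d - (1-r)^{-1}\varPi_{d^*})D(r)$ piece. When the leading $t^{\frac{1}{2}} D(r)$ is combined with the $c_0 = 2$ configurations it assembles into $(r^{-1}\varPi_d - (1-r)^{-1}\varPi_{d^*})\cdot tD(r)^2 e^{-r_0 r(1-r)t\varDelta}$, and the Hodge identity $tD(r)^2 = tr(1-r)\varDelta = -\partial_{r_0}$ turns the time integral into boundary terms at $r_0 = 0$ and $r_0 = \bar r(t)$, handled by the integration-by-parts formula \eqref{IntPart}. The boundary contributions produce, modulo $\dot O((r(1-r)t)^{-\{\varepsilon_n\}})$, precisely $2(\varPi_0 - \id)\Omega + (r^{-1}\varPi_d - (1-r)^{-1}\varPi_{d^*}) \star \nabla^\rG$, as already indicated in the paragraph preceding the lemma. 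The $2(\varPi_0 - \id)\Omega$ piece combines with the $2\Omega$ from the previous step, and the $\id$ contribution is absorbed into the heat-kernel expansion that reconstructs $e^{-(\nabla^{\Ker(\varDelta)}(r))^2}$ on the right, exactly as in the proof of Lemma \ref{Main3}.

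Finally, summing the surviving configurations over all $n$ and $I$ yields the asserted formula, with the rate $\dot O((r(1-r)t)^{-\{\varepsilon_n\}})$ inherited degree-wise from the corresponding rates in Lemmas \ref{Schick4.6}, \ref{Schick4.7} and \ref{Main3}. The main obstacle is purely combinatorial: one must verify that the boundary terms from \eqref{IntPart}, together with the interior $\Omega_0$-pieces identified in Lemma \ref{Main3}, reassemble cleanly into the $(\nabla^{\Ker(\varDelta)}(r))^2$ factor, and that the error terms coming from the $\bar r(t)$-boundaries of $\Sigma^n_{\bar r, I}$ remain of the stated degree-dependent order uniformly in $n$.
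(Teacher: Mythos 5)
Your proposal follows the paper's own argument closely (the paper gives its proof of this lemma in the text immediately preceding the statement rather than in a separate proof environment). Both you and the paper: write $(D_t - D'_t) e^{-D_t(r)^2}$ as a sum of modified terms $K'$, decompose
$$D_t - D'_t = t^{\frac{1}{2}}\big(r^{-1}\varPi_d - (1-r)^{-1}\varPi_{d^*}\big)D(r) + 2\Omega + t^{-\frac{1}{2}}(\iota_\varTheta + \varTheta\wedge),$$
observe that this decomposition keeps $K'$ in a form to which Lemmas~\ref{Schick4.6}, \ref{Schick4.7} and the integration-by-parts formula \eqref{IntPart} apply, and then identify the surviving leading terms.

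Two small bookkeeping points worth flagging. First, you say the extra $t^{\frac{1}{2}}D(r)$ ``is combined with the $c_0 = 2$ configurations'' to produce $tD(r)^2$; in the paper's indexing it is the $c_0 = 1$ configurations that combine with the additional $D(r)$ to give an effective power $2$ (a $c_0 = 2$ configuration would give an odd power $(t^{\frac{1}{2}}D(r))^3$, which decays). Second, the description of how $2\varPi_0\Omega$ emerges is slightly garbled: the $2\Omega$ piece of $D_t - D'_t$, multiplied against the limiting $e^{-(\nabla^{\Ker(\varDelta)}(r))^2}$, contributes $2\Omega$ on the left; the IBP on the $t^{\frac{1}{2}}(\cdot)D(r)$ piece yields $2(\varPi_0 - \id)\Omega + (\cdot)\star\nabla^\rG$; and the cancellation $2\Omega + 2(\varPi_0 - \id)\Omega = 2\varPi_0\Omega$ produces the first summand. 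Your phrase ``flanked on the right by $\varPi_0$'' would give $2\Omega\varPi_0$, which is not what one wants here. Neither slip affects the validity of the overall argument, which is essentially identical to the paper's.
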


The case for $ (D _t - D' _t) e ^{- D _t (r) ^2} (D _t - D' _t)$ is similar.
We simply state the result:
\begin{lem}
\label{Decay3}
For all $0 < r < 1$, as $t\to\infty$,
\begin{align*}
\Big\| ( & D _t - D' _t) e ^{- D _t (r) ^2} (D _t - D' _t)\\
-& \big(2 \varPi _0 \Omega + \big(\frac {\varPi _d}{r} - \frac{\varPi _{d ^*}}{1 - r} \big) \star \nabla ^\rG \big)
\star e ^{ -(\nabla ^{\Ker (\varDelta )} (r))^2} 
\star \big(2 \Omega \varPi _0 + \nabla ^\rG \star \big(\frac{\varPi _{d ^*}}{r} - \frac{\varPi _d}{1 - r} \big) \big) 
\Big\| _{\HS m} \\
&= \dot O ((r (1 - r) t ) ^{- \{\varepsilon _n\} }).
\end{align*}
\end{lem}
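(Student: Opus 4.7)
The argument parallels the proof of Lemma \ref{Main6}, now with an additional factor of $D_t - D'_t$ acting on the right to track. I would expand both outer factors via the decomposition
$$D_t - D'_t = t^{\frac{1}{2}}\big(r^{-1}\varPi_d - (1-r)^{-1}\varPi_{d^*}\big)D(r) + 2\Omega + t^{-\frac{1}{2}}(\iota_\varTheta + \varTheta\wedge),$$
and apply the Duhamel expansion \eqref{HeatDfn} to $e^{-D_t(r)^2}$. This produces a sum of expressions $K''(t, n, I, c_0, \ldots, c_n; a_1, \ldots, a_n)$ that are analogous to the $K'$ appearing in the proof of Lemma \ref{Main6}, but now sandwiched between two copies of $D_t - D'_t$.

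Following the selection mechanism of Lemmas \ref{Schick4.6} and \ref{Schick4.7}, combined with the integration-by-parts identity \eqref{IntPart}, all such $K''$-terms are $\dot O((r(1-r)t)^{-\{\varepsilon_n\}})$ in the $\| \cdot \|_{\HS m}$ norm except those for which $c_i = 0$ whenever $i \in I$ and $c_i = 2$ whenever $i \notin I$. As in the proof of Lemma \ref{Main3}, the surviving middle piece combines in the $t \to \infty$ limit to $e^{-(\nabla^{\Ker(\varDelta)}(r))^2}$, flanked by the leading and trailing contributions from the two copies of $D_t - D'_t$. On the left, the $2\Omega + t^{-1/2}(\iota_\varTheta + \varTheta\wedge)$ part, contracted against a leading $\varPi_0$, yields $2\varPi_0 \Omega$; the $t^{\frac{1}{2}}(r^{-1}\varPi_d - (1-r)^{-1}\varPi_{d^*})D(r)$ part, absorbed via \eqref{IntPart} against the adjacent $tD(r)^2$ Duhamel factor, contributes $(\varPi_d/r - \varPi_{d^*}/(1-r)) \star \nabla^\rG$. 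The right-hand flanking operator is handled symmetrically: the identities $D(r)\varPi_{d^*} = rd$ and $D(r)\varPi_d = (1-r)d^*$ identify $\varPi_{d^*}/r - \varPi_d/(1-r)$ as the natural combination dual to $\varPi_d/r - \varPi_{d^*}/(1-r)$ on the left (for which $\varPi_d D(r) = rd$ and $\varPi_{d^*} D(r) = (1-r)d^*$), producing $2\Omega\varPi_0 + \nabla^\rG \star (\varPi_{d^*}/r - \varPi_d/(1-r))$.

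The main obstacle will be the case in which both outer $D_t - D'_t$ factors simultaneously contribute their $t^{\frac{1}{2}}D(r)$ pieces. Naively this produces a spurious factor of $t$, which must be absorbed by applying the integration by parts \eqref{IntPart} twice, once at each end of the Duhamel chain; after that step, any leftover $D(r)$'s against a $\varPi_0$ vanish thanks to $D(r)\varPi_0 = 0 = \varPi_0 D(r)$, so that only the $\nabla^\rG$ contributions from the two flanking operators survive and multiply the middle $e^{-(\nabla^{\Ker(\varDelta)}(r))^2}$. Collecting all surviving pieces then yields precisely the advertised factorization, with remainder $\dot O((r(1-r)t)^{-\{\varepsilon_n\}})$ in each $\| \cdot \|_{\HS m}$.
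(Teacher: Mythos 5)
Your proposal is correct and matches the paper's intent: the paper gives no proof of Lemma~\ref{Decay3}, declaring it ``similar'' to Lemma~\ref{Main6} and simply stating the result, and your reconstruction---decomposing both flanking $D_t - D'_t$ factors, running the Duhamel expansion, and invoking the selection machinery of Lemmas~\ref{Schick4.6}, \ref{Schick4.7} together with the integration-by-parts identity~\eqref{IntPart}---is exactly what that analogy entails. The only genuinely new step over Lemma~\ref{Main6}, the spurious $t^{\frac{1}{2}}\cdot t^{\frac{1}{2}}$ when both flanking pieces contribute their $D(r)$ parts, you dispose of correctly via a second integration by parts at the other end of the chain combined with $D(r)\varPi_0 = 0 = \varPi_0 D(r)$.
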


\subsection{Large time behavior of the super-trace}
By Lemma \ref{TrClassCri}, 
$e ^{- D _t (r) ^2} $, $(D _t - D' _t) e ^{- D _t (r) ^2} $ 
and their limits as $t \to \infty $ are trace class operators.
We compute their (super)-trace as $t \to \infty $
(we do not need the super-trace of $(D _t - D' _t) e ^{- D_t (r) ^2} (D _t - D' _t)$).

\begin{thm}
\label{TrDecay0}
As $t \to \infty$,
\begin{align*}
\big\| \str _{\Psi} (e^{- D _{t} (r) ^2} - e ^{ -(\nabla ^{\Ker (\varDelta )} (r))^2}) \big\| _{C ^m}
=& \dot O ((r (1 - r) t ) ^{- \{\varepsilon _n \}}), \\
\big\| \str _{\Psi} \big( (D_t - D' _t) e^{- D _{t} (r) ^2} 
- 2 \varPi _0 \Omega e^{ -(\nabla ^{\Ker (\varDelta )} (r))^2} \big) \big\| _{C ^m}
=& \dot O ((r (1 - r) t ) ^{- \{\varepsilon _n \}}).
\end{align*}
\end{thm}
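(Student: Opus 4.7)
The approach is to upgrade the Hilbert--Schmidt kernel estimates of Lemmas \ref{Main3} and \ref{Main6} to $C^m$-bounds on the super-trace, by a factorisation-plus-Cauchy--Schwarz argument modelled on the proof of Lemma \ref{TrClassCri}. The key ingredient I intend to establish is a trace bound of the form
\begin{equation*}
\big\| \str_\Psi (K_1 \star K_2) \big\| _{C^m}
\dot \leq \| K_1 \| _{\HS m'} \| K_2 \| _{\HS m'},
\end{equation*}
valid for any $K_1, K_2 \in \Psi^{-\infty}_{\ell^2}(\rM\times_\rB\rM, \rE)$ and $m'$ sufficiently large depending on $m$. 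This estimate is obtained by adapting the chain \eqref{TrEst1}--\eqref{TrEst3} to a two-factor (rather than three-factor) product and then applying Sobolev embedding on the compact base $\rB$ to pass from the $L^2$-in-$\rB$ control built into $\|\cdot\|_{\HS m'}$ to $C^m$-control.

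Granted this bound, for the first identity I would write
\begin{equation*}
e^{-D_t(r)^2} - e^{-(\nabla^{\Ker(\varDelta)}(r))^2}
= \Delta_t \star e^{-\frac{1}{2}D_t(r)^2}
+ e^{-\frac{1}{2}(\nabla^{\Ker(\varDelta)}(r))^2} \star \Delta_t,
\end{equation*}
where $\Delta_t := e^{-\frac{1}{2}D_t(r)^2} - e^{-\frac{1}{2}(\nabla^{\Ker(\varDelta)}(r))^2}$. The proof of Lemma \ref{Main3} is invariant under the rescaling $D_t(r)^2 \mapsto \tfrac{1}{2}D_t(r)^2$ up to numerical constants, so $\|\Delta_t\|_{\HS m'} = \dot O((r(1-r)t)^{-\{\varepsilon_n\}})$, while the trailing factors have $\|\cdot\|_{\HS m'}$ uniformly bounded in $t$. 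Combining with the trace bound above yields the first estimate.

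For the second identity, the same scheme applied to Lemma \ref{Main6} gives
\begin{equation*}
\Big\| \str_\Psi \Big((D_t - D'_t) e^{-D_t(r)^2}
- \big(2 \varPi_0 \Omega + (\tfrac{\varPi_d}{r} - \tfrac{\varPi_{d^*}}{1-r}) \star \nabla^\rG \big) \star e^{-(\nabla^{\Ker(\varDelta)}(r))^2}\Big) \Big\| _{C^m}
= \dot O((r(1-r)t)^{-\{\varepsilon_n\}}).
\end{equation*}
It then remains to verify the algebraic identity
\begin{equation*}
\str_\Psi \Big( \big(\tfrac{\varPi_d}{r} - \tfrac{\varPi_{d^*}}{1-r}\big) \star \nabla^\rG \star e^{-(\nabla^{\Ker(\varDelta)}(r))^2} \Big) = 0,
\end{equation*}
which follows from the cyclicity of $\str_\Psi$ (\cite[Proposition 3]{Lott;EtaleGpoid}) and the Hodge orthogonality $\varPi_0 \varPi_d = \varPi_0 \varPi_{d^*} = 0$: every positive power of $\nabla^{\Ker(\varDelta)}(r)$ lies in $\varPi_0 (\cdot) \varPi_0$, so $e^{-(\nabla^{\Ker(\varDelta)}(r))^2}\varPi_d = 0$ and similarly for $\varPi_{d^*}$, after which cyclicity closes the argument.

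The main obstacle is the verification of the trace bound in the first paragraph. Without the compactly supported intermediate tensor $F$ of Example \ref{LocalTensor} to provide locally finite cancellation, one has to control the $\rG$-summation implicit in the $\star$-product using only the $\ell^2$-structure of $\|\cdot\|_{\HS m'}$, which requires a slightly more delicate Cauchy--Schwarz splitting than that used in the derivation of \eqref{TrEst1}--\eqref{TrEst3}.
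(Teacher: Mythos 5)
Your proposal is correct and follows essentially the same route as the paper: the paper also writes the difference of heat operators as a two-factor telescoping $A^2-B^2=(A-B)A+B(A-B)$ (phrased via the rescaling $e^{-D_t(r)^2}=2^{-N_\Omega/2}e^{-D_{t/2}(r)^2}e^{-D_{t/2}(r)^2}2^{N_\Omega/2}$, which is the same as your $e^{-\frac{1}{2}D_t(r)^2}$ factorisation since $e^{-\frac{1}{2}D_t(r)^2}=2^{-N_\Omega/2}e^{-D_{t/2}(r)^2}2^{N_\Omega/2}$), and then invokes precisely the degree-by-degree two-factor Cauchy--Schwarz trace bound you identify as the main obstacle, attributing it to ``the same arguments as the proof of Lemma \ref{TrClassCri}, in particular \eqref{TrEst3}''. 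The one genuine (though minor) structural difference is in the second estimate: you dispose of the unwanted term $(\frac{\varPi_d}{r}-\frac{\varPi_{d^*}}{1-r})\star\nabla^\rG\star e^{-(\nabla^{\Ker(\varDelta)}(r))^2}$ by a post-hoc cyclicity argument on $\str_\Psi$, whereas the paper first cycles $e^{-D_{t/2}(r)^2}$ to the left inside the super-trace, telescopes, and then kills the extra term inside the operator expression via $e^{-(\nabla^{\Ker(\varDelta)}(r))^2}\varPi_d=e^{-(\nabla^{\Ker(\varDelta)}(r))^2}\varPi_{d^*}=0$ before the final trace estimate, so it never needs a separate cyclicity step for a non-smoothing factor; both lead to the same conclusion, but the paper's ordering sidesteps having to justify cyclicity for $\nabla^\rG$, which is not a smoothing operator.
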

\begin{proof}
We begin with $\str _\Psi (e ^{- D_t (r) ^2})$.
Write 
\begin{align*}
e ^{- D_{t} (r) ^2} =& 2 ^{- \frac{N _\Omega }{ 2}} e ^{- D _{t / 2} (r) ^2} 
e ^{- D _{t / 2} (r) ^2} 2 ^{\frac{N _ \Omega }{ 2}}.
\end{align*}
Then 
\begin{align*}
e^{- D_{t} (r) ^2} - e ^{ - (\nabla ^{\Ker (\varDelta )} (r))^2}
=& 2 ^{- \frac{N _\Omega }{ 2}} \big(
e ^{- D _{t / 2} (r) ^2} ( e ^{- D _{t / 2} (r) ^2} - e ^{ -(\nabla ^{\Ker (\varDelta )} (r))^2}) \\
&+ ( e ^{- D _{t / 2} (r) ^2} - e ^{ -(\nabla ^{\Ker (\varDelta )} (r))^2}) e ^{ -(\nabla ^{\Ker (\varDelta )} (r))^2}
\big) 2 ^{ \frac{N _ \Omega }{ 2}}.
\end{align*}
Denote by $P _k$ the projection to (total) degree $k$ component, $k =0, 1, 2, \cdots$.
By the same arguments as in the proof of Lemma \ref{TrClassCri} (in particular Equation \eqref{TrEst3}),
one estimates the $C ^m$ norms (for $\Omega ^\bullet _{\ell ^2 , m} (\rB \rtimes \rG)$):
\begin{align*}
\big\| P _k \big( & \str _{\Psi} (e^{- D _t (r) ^2} - e ^{ - (\nabla ^{\Ker (\varDelta )}(r) )^2}) ) \big\| _{C ^m} \\
=& \Big\| 2 ^{- \frac{N _\Omega}{2}} \str _{\Psi} \Big(
\sum _{k' = 0} ^k (P _{k'} e ^{- D _{t / 2} (r) ^2}) \big(P _{k - k'} ( e ^{- D _{t /2} (r) ^2} - e ^{ -(\nabla ^{\Ker (\varDelta )} (r))^2}) \big) \\
&+ \sum _{k' = 0} ^k \big(P _{k'}( e ^{- D_{t / 2} (r) ^2} - e ^{ -(\nabla ^{\Ker (\varDelta )} (r))^2}) \big)
P _{k -k'} e ^{ -(\nabla ^{\Ker (\varDelta )} (r))^2}
\Big) \Big\| _{C ^m} \\
\dot \leq &  
\sum _{k' = 0} ^k \big\| P _{k'} e ^{- D _{t / 2} (r) ^2} \big\| _{\HS m'}
\big\| P _{k - k'} (e ^{- D _{t / 2} (r) ^2} - e ^{ -(\nabla ^{\Ker (\varDelta )}(r) )^2}) \big\| _{\HS m'} \\
&+ \sum _{k' = 0} ^k \big\| P _{k'} (e ^{- D _{t / 2} (r) ^2} - e ^{ -(\nabla ^{\Ker (\varDelta )}(r) )^2}) \big\| _{\HS m'}
\big\| P _{k - k'} e ^{ -(\nabla ^{\Ker (\varDelta )}(r) )^2} \| _{\HS m'},
\end{align*}
for some $m'$.
By Lemma \ref{Main3}, 
$ \big\| P _{k'} (e ^{- D_{t / 2} (r) ^2} - e ^{ -(\nabla ^{\Ker (\varDelta )} (r))^2}) \big\| _{\HS m'} \\
= O ((r (1 - r) t ) ^{- \varepsilon _{k'}})$
for some $\varepsilon _{k'} > 0$.
The first estimate follows.

As for the second estimate, we have
\begin{align*}
(D _t - D' _t) e ^{- D _t (r) ^2} =& 
2 ^{\frac{1}{2} - \frac {N _\Omega}{2} } \big((D _{t / 2} - D' _{t / 2}) e ^{- D_{t / 2} (r) ^2}  e ^{- D_{t / 2} (r) ^2} 2 ^{\frac{N _\Omega }{2}} \\
2 \varPi _0 \Omega e ^{ -(\nabla ^{\Ker (\varDelta )}(r) )^2}
=& 2 ^{\frac{1}{2} - \frac{N _\Omega}{2}} 2 \varPi _0 \Omega e ^{ -(\nabla ^{\Ker (\varDelta )}(r) )^2} e ^{ -(\nabla ^{\Ker (\varDelta )}(r) )^2} 
2 ^{\frac{N _\Omega }{2}}.
\end{align*}
Therefore in $\Omega _{\ell ^2} ^\bullet (\rB \rtimes \rG)$
\begin{align*}
\str _\Psi & \big( (D _t - D' _t) e ^{- D _t (r) ^2} - 2 \Omega e ^{ -(\nabla ^{\Ker (\varDelta )}(r) )^2} \big) \\
=& 2 ^{\frac{1}{2} - \frac{N _\Omega }{2}} \str _\Psi \Big(
e ^{- D_{t / 2} (r) ^2} (D _{t / 2} - D' _{t / 2}) e ^{- D_{t / 2} (r) ^2} \\
&- 2 e ^{ -(\nabla ^{\Ker (\varDelta )}(r) )^2} \varPi _0 \Omega e ^{ -(\nabla ^{\Ker (\varDelta )}(r) )^2} \Big) \\
=& 2 ^{\frac{1}{2} - \frac{N _\Omega }{2}} \str _\Psi \Big(
\big( e ^{- D_{t / 2} (r) ^2} - e ^{ -(\nabla ^{\Ker (\varDelta )}(r) )^2} \big) (D _{t / 2} - D' _{t / 2}) e ^{- D_{t / 2} (r) ^2} \\
&+ e ^{ -(\nabla ^{\Ker (\varDelta )}(r) )^2} 
\big( (D _{t / 2} - D' _{t / 2}) e ^{- D_{t / 2} (r) ^2} - 2 \varPi _0 \Omega e ^{ -(\nabla ^{\Ker (\varDelta )}(r) )^2} \big) \Big).
\end{align*}
Because $ e ^{ -(\nabla ^{\Ker (\varDelta )}(r) )^2} \varPi _d = e ^{ -(\nabla ^{\Ker (\varDelta )}(r) )^2} \varPi _{d ^*} = 0 $,
\begin{align*}
e ^{ -(\nabla ^{\Ker (\varDelta )}(r) )^2} &
\big( (D _{t / 2} - D' _{t / 2}) e ^{- D_{t / 2} (r) ^2} - 2 \varPi _0 \Omega e ^{ -(\nabla ^{\Ker (\varDelta )}(r) )^2} \big) \\
=& e ^{ -(\nabla ^{\Ker (\varDelta )}(r) )^2} 
\big( (D _{t / 2} - D' _{t / 2}) e ^{- D_{t / 2} (r) ^2} \\
&- (2 \varPi _0 \Omega + (r ^{-1} \varPi _d - (1 - r) ^{-1} \varPi _{d ^*}) \star \nabla ^\rG) \star e ^{ -(\nabla ^{\Ker (\varDelta )}(r) )^2} \big), 
\end{align*}
and the claim follows by the same arguments above and applying Lemma \ref{Main4}.
\end{proof}

\section{The non-commutative torsion form and characteristic classes}
We follow \cite{Lott;NonCommTorsion} and \cite{BGV;Book} to study the $r \to 0, r \to 1$ and 
$t \to 0$ behavior of the heat kernel.
We first need a more explicit description of the curvature of the Bismut super-connection.

\begin{nota}
Let $\tau ^{\alpha}$ be a local basis of $\pi ^* (T^* \rB)$ and let $\Lambda _\alpha$ denote exterior multiplication by $\tau ^\alpha$. 
Let $\{e_j\}_{j=1}^{\dim \rZ}$ be a local orthonormal basis of $\rV$, with dual basis 
$\{\tau^j\}_{j=1}^{\dim \rZ}$. Let $\Lambda _j$ denote exterior multiplication by $\tau^j$ and let $\iota _j$ denote interior multiplication by $e_j$. 
Put $$c^j := \Lambda _j - \iota _j, \hat{c} ^j := \Lambda _j + \iota _j.$$
\end{nota}
Set
$$\psi :=(\nabla^{\rE} )'- \nabla ^E,
2\Omega := (L ^{\rE ^\bullet _\flat })'- L ^{\rE ^\bullet _\flat }\ {\rm and}\ \nabla^{E,u}=\nabla^E+{\psi\over 2}.$$
We will use the Einstein summation convention freely. Denote the Chirstoffel symbols by
$$\omega_{IJK}=\tau^{I}\left(\nabla^{TM}_{e_K}e_J\right),$$
and the twisting curvature by 
$$\mathcal{R} :={1\over 4} (g ^\rV ( e_j, R^{\rM / \rB} e_k )) \hat{c}^j \hat{c}^k \otimes I_{\rE} 
-{1\over 4} ( I_{\wedge ^\bullet \rV'} \otimes \psi^2 ) \in \Omega^2 (\rM, \Hom ( \wedge ^\bullet \rV' \otimes E)).$$
Let $\nabla^{TZ\otimes E,u}$ be the tensor of $\nabla^{\rM / \rB}$ and $\nabla^{E,u}$, 
and $R\in C^{\infty}(\rM)$ be the scalar curvature of the fibers. 
For $t>0$, put
$$\mathcal{D}_{j} :=\nabla^{TZ\otimes E,u}_{e_j}-{1\over{2\sqrt{t}}}\omega_{\alpha jk}E^{\alpha}c^k-{1\over {4t}}\omega_{\alpha\beta j}E^{\alpha}E^{\beta},$$
$$\mathcal{D}^{2} :=\mathcal{D}_{j}\mathcal{D}_{j}-\mathcal{D}_{\nabla^{\rM / \rB}_{e_j}e_j}.$$

Recall that $(D _\rB ) ^2 = (D _\rB ') ^2 = 0$, 
hence 
$ (r D _\rB + (1 - r) D_\rB ')^2 
= 4 r(1 - r) (\frac{1}{2} D _\rB + \frac{1}{2} D _\rB ') ^2$.
Since $(L ^{\rE ^\bullet _\flat })' - L ^{\rE ^\bullet _\flat }$ is a $\rG$-invariant tensor, 
which in particular anti-commutes with $\nabla ^\rG$,
we have by direct computation the Lichnerowicz formula (cf. \cite[(6.29)]{Lott;NonCommTorsion}),
\begin{multline}
\label{Lich}
(D_{t} (r) )^2 s
=4r(1-r) \Big(
{t\over 4}\big(-\mathcal{D}^2+{R\over 4}\big)
+{t\over 8}c^i c^j \mathcal{R}(e_i,e_j) + {\sqrt{t}\over 2} c^i \Lambda _{\alpha} \mathcal{R} (e_i, e_\alpha) \\
+{1\over 2} \Lambda _{\alpha} \Lambda _{\beta} \mathcal{R}(e_{\alpha},e_{\beta})
+{t\over 4} \Big({1\over 4}\psi^2_j + {1\over 8}\hat{c}^j \hat{c}^k [\psi_j,\psi_k]-{1\over 2} c^j \hat{c}^k 
(\nabla^{TZ\otimes E,u}_{e_j}\psi_k ) \Big)\\
- {\sqrt{t}\over 4} \Lambda _{\alpha} \hat{c}^{j} (\nabla^{TZ\otimes E,u}_{e_\alpha} \psi_j ) \Big) s
-{\sqrt{t}\over 2} \sum_{g\in G} d g (c (d _\rV \chi )) (g^{-1})^*s\\
+2 \big({1 \over 2} - r \big) {\sqrt{t}\over 2} \sum_{g\in G} d g (\hat{c}(d _\rV \chi )) (g^{-1})^* s
-\sum_{g\in G} d g \Lambda _{d _\rH \chi }  (g^{-1} )^*s + (\nabla^G )^2 s,
\end{multline}
where $d _\rV$ and $d _\rH$ respectively denote the vertical and horizontal DeRham differential operators.

Define the non-commutative degree operator $N _\rG := k$ on $ \Omega ^{k , l} _{\ell ^2} (\rB \rtimes \rG )$.
We consider the rescaled operator
$$r ^{N _\rG } \left(r D _\rB + (1 - r) D _\rB ' + \nabla ^\rG \right) ^2 r ^{- N _\rG } 
= r \tilde \varDelta ,$$
where 
$$\tilde \varDelta := (1 - r) (D_\rB + D _\rB ') ^2 + \nabla ^\rG (r D_\rB + (1 - r) D _\rB ')
+ (r D _\rB + (1 - r) D_\rB ') \nabla ^\rG + r (\nabla ^\rG ) ^2 .$$
Its heat kernel is just 
$$ r ^{N _\rG} \big( e ^{- t(r D _\rB + (1 - r) D _\rB ' + \nabla ^\rG )^2} (x, y, z) \big)$$
(corresponding to the operator $r ^{N _\rG} e ^{- t(r D_\rB + (1 - r) D _\rB ' + \nabla ^\rG )^2} r ^{- N _\rG}$),
which is the unique solution of 
\begin{equation}
\label{RescaledHeat}
\big(\frac{d}{d t} + r \tilde \varDelta _y \big)
\big(r ^{N _\rG} \big(e ^{- t (r D _\rB + (1 - r) D_\rB ' + \nabla ^\rG)^2} \big)( x, y, z) \big) = 0.
\end{equation}
Let $\tilde t := r t$, 
then Equation \eqref{RescaledHeat} is equivalent to
\begin{equation}
\label{RescaledHeat2}
\big(\frac{d}{d \tilde t} + \tilde \varDelta _y \big) 
\big(r ^{N _\rG} (e ^{- \tilde t \tilde \varDelta} (x, y, z))\big) = 0 .
\end{equation}
One can solve \eqref{RescaledHeat2} using the Levi parameterix method as in \cite[Chapter 2]{BGV;Book}.
It follows in particular that one has asymptotic expansion as $\tilde t = r t \to 0 $:
\begin{equation}
\label{Expansion1}
r ^{N _\rG} ( e ^{- \tilde t \tilde \varDelta} (x, y, z) )
\sim (4 \pi \tilde t) ^{- \frac{\dim \rZ}{2}} e ^{- \frac{ \bd (y, z) ^2}{4 \tilde t}}
\sum _{i = 0} \tilde t ^i \tilde \Phi _i (x, y, z),
\end{equation}
where $\tilde \Phi _i$ can be computed explicitly as in \cite[Theorem 2.26]{BGV;Book}.
Namely, in normal coordinates around arbitrary $z \in \rZ _x$, $y = \exp _z \by$,
\begin{align}
\label{expansion2}
\tilde \Phi _0 (x, y, z) :=& I \\
\nonumber
\tilde \Phi _i (x, y, z) :=&
\tau \Big( - \int _0 ^1 s ^{i - 1} \tau (x, \exp _z s \by , z) 
(J ^{\frac{1}{2}} \tilde \varDelta J ^{- \frac{1}{2}} \tilde \Phi _{i - 1}) (x, \exp _z s \by , z) d s \Big).
\end{align}
Observe that $\tilde \Phi _i$ is at most of non-commutative degree $i$.
Therefore one can rescale and obtain the asymptotic expansion for fixed $t > 0$ and $r \to 0 $:
\begin{equation}
\label{expansion3}
e ^{- (D _t (r))^2} (x, y, z) 
\sim (4 \pi r t) ^{- \frac{\dim \rZ}{2}} e ^{- \frac{ \bd (y, z) ^2}{4 r t}}
\sum _{i = 0} r ^i \Phi _i (x, y, z, t), 
\end{equation}
in the sense that the coefficients of each $d g _{(k)}$ is an asymptotic expansion.
Differentiating Equation \eqref{expansion3}, one gets for fixed $t > 0$, $r \to 0$
\begin{equation}
\label{expansion4}
(D _t - D _t ' ) e ^{- D _t (r) ^2} (x, z, z)
\sim (4 \pi r t) ^{- \frac{\dim \rZ}{2}} 
\sum _{i = 0} r ^i (D_t - D _t ' ) \Phi _i (x, z, z, t).
\end{equation}

\subsection{The Chern character and Chern-Simon form}
Consider the point-wise super trace of \eqref{expansion4}.
From Equation \eqref{expansion2}, we observe that
each $\Phi _i$ is a sum of product of terms in \eqref{Lich} and their derivatives.
Moreover, in order for $(D _t - D _t ' ) \Phi _i$ to have non-zero point-wise super-trace 
it must have degree $\dim \rZ$ in both $\{ \Lambda _j \} $ and $\{ \iota _j \}$.

We write $c _j, \hat c _j$ in terms of $\iota _j, \Lambda _j$.
Note in particular that by \cite[(3.16)]{BGV;Book}, 
the twisting curvature term $\sum _{i, j} c _i c _j \mathcal R (e _i , e _j)$
is of the form $\sum _{i,j,i',j'} \iota _i \Lambda _j \iota _{i'} \Lambda _{j'} \mathcal R _{i j i' j'}$. 
It follows that each factor $\Lambda _i $ is multiplied by factor of $r ^{\frac{1}{2}}$ (or higher power),
therefore $(D _t - D _t ' ) e ^{ - D _t (r) ^2 } = O (r ^{-\frac{1}{2}}) $ as $r \to 0$.
The case for for $r \to 1$ is similar.
Hence it makes sense to define:

\begin{dfn}
The Chern character of $D _t (r)$, $0 \leq r \leq 1$, is
$$ \Ch (D _t (r)) := \str _\Psi \big(e ^{ - D _t (r) ^2 } \big) 
\in \Omega _{\ell ^2} ^\bullet (\rB \rtimes \rG ) _{\Ab}.$$
The Chern-Simon form is
$$ \CS (D_t , D ' _t ) 
:= - \int _0 ^1 \str _\Psi \big( (D _t - D_t ' ) e ^{ - D _t (r) ^2 } \big) d r
\in 
\left\{
\begin{array}{ll}
\Omega _{\ell ^2} ^\bullet (\rB \rtimes \rG ) _{\Ab} & \text{if $\dim \rZ$ is odd,} \\
\widetilde \Omega _{\ell ^2} ^\bullet (\rB \rtimes \rG ) _{\Ab} & \text {if $\dim \rZ$ is even.}
\end{array}
\right.
$$
\end{dfn}

Consider $\Ch (\eth _t (r) )$ as $r \to 0$.
Again one considers the asymptotic expansion \eqref{expansion3}.
By similar arguments as above, 
one concludes $\lim _{r \to 0} \Ch (\eth _t (r) )$ exists,
moreover if $\dim \rZ $ is odd
$$ \lim _{r \to 0} \Ch (\eth _t (r) ) = 0 $$
since the only non-commutative term involving $\Lambda _j$ is of $O (r)$;
If $\dim \rZ$ is even then modulo $\oplus _{k > l} \Omega ^{k, l} _{\ell ^2} (\rM \rtimes \rG) _{\Ab} $,
$\lim _{r \to 0} \Ch (\eth _t (r) )$ is a combination of
$$4 r (1-r) \Big( \frac{t}{8} c _i c _j \mathcal R (E _i , E _j) 
+ \frac{t}{4} ( \frac{1}{8} \hat c _j \hat c _k [\psi _j , \psi _k] 
- \frac{1}{2} c _j \hat c _k \nabla ^{T \rZ \otimes \rE, u} _{E _j} \psi _k ) \Big) \text{ and }
\Lambda _\alpha (\frac{\partial }{\partial x ^\alpha} (g ^* \chi)).$$
It follows that in both cases
$$\lim _{r \to 0} \Ch (\eth _t (r)) = \lim _{r' \to 0} \Ch (\eth _t (1 - r')).$$
Hence, our construction implies
\begin{equation}
\label{dCS}
(d _\rB + d ) \CS (\eth _t, \eth ' _t ) 
= \lim _{r \to 0} \Ch (\eth _t (r)) - \lim _{r \to 1} \Ch (\eth _t (r)) = 0 .
\end{equation}

\subsection{The analytic torsion form and transgression formula}
Consider the fiber bundle $\rM \times \bbR ^+ \to \rB \times \bbR ^+$, 
with $\rG$ acting trivially on the $\bbR ^+$ factor.
Define the super-connection
$$ \widetilde {D} := D _t + d t \partial _t $$
on $\rB \times \bbR ^+$.
The adjoint connection of $\widetilde{D}$ with respect to the metric
$$ \langle s , s' \rangle _t := t ^{N _\rV} \langle s , s' \rangle $$
is
$ \widetilde {D}' := D' _t + d t (\partial _t + t ^{-1} N ) .$
Denote
$$ \widetilde {D} (r) := r \widetilde {D} + (1 - r) \widetilde {D}' .$$
One has 
\begin{align*}
\widetilde D (r) ^2 
=& d t (- \partial _t (1 - r) \widetilde {D}' 
+ [(1 - r) t ^{-1} N , r \widetilde {D} + (1 - r) \widetilde {D}'] ) + D _t (r) ^2 \\
=& r (1 - r) d t [ t ^{-1} N , \widetilde {D} - \widetilde {D}'] + D _t (r) ^2.
\end{align*}
By Duhamel's formula
$$ e ^{- \widetilde {D} (r) ^2}
= e^{-D _t (r) ^2 }
+ d t \int _0 ^1 e ^{ - r' D _t (r) ^2 } r (1 - r) [ t ^{-1} N , \widetilde {D} - \widetilde {D}'] 
e ^{ -(1 - r') D _t (r) ^2 } d r' .$$
Consider the Chern-Simon form
\begin{align*}
\CS (\widetilde{D} , \widetilde {D}') 
=& - \int _0 ^1 \str _\Psi \big( (\partial _r \widetilde{D} (r)) e ^{- \widetilde{D} (r) ^2} \big) d r 
 \\
=& - \int _0 ^1 \str _\Psi \big( (D_t - D ' _t - t ^{-1} N d t ) e ^{- \widetilde{D} (r) ^2} \big) d r 
\in \Omega _{\ell ^2} ^\bullet (\rB \times \bbR ^+ \rtimes \rG ) _{\Ab}.
\end{align*}
We compute its $d t $ term:
\begin{align*}
\CS (\widetilde {D} , \widetilde {D}') &- \CS (D _t , D _t') \\
=& d t \int _0 ^1 \str _\Psi ( t ^{-1} N e ^{- D _t (r) ^2 } ) d r \\
&+ d t \int _0 ^1 \str _\Psi \Big((D_t  - D_t') 
\int _0 ^1 e ^{ - r' D _t (r) ^2 } r (1 - r) [ t ^{-1} N , \widetilde{D} - \widetilde {D}'] 
e ^{ -(1 - r') \eth _t (r) ^2 } d r' \Big) d r \\
=& d t \int _0 ^1 \str _\Psi ( t ^{-1} N e ^{- D _t (r) ^2 } ) d r \\
&+ d t \int _0 ^1 r (1 - r) \int _0 ^1 \str _\Psi \Big(t ^{-1} N 
\big[D _t - D_t', e ^{ - r' D _t (r) ^2 } (D _t - D_t' )
e ^{ -(1 - r') D _t (r) ^2 }  \big]dr' \Big) d r .
\end{align*}
Define 
\begin{align*}
T (t) 
\in &
\left\{
\begin{array}{ll}
\Omega _{\ell ^2} ^\bullet (\rB \rtimes \rG ) _{\Ab} & \text{if $\dim \rZ$ is odd,} \\
\widetilde \Omega _{\ell ^2} ^\bullet (\rB \rtimes \rG ) _{\Ab} & \text {if $\dim \rZ$ is even,}
\end{array}
\right. \\
T (t) :=-& \int _0 ^1 \str _\Psi ( N e ^{- D _t (r) ^2 } ) d r \\
&- \int _0 ^1 r (1 - r) \int _0 ^1 \str _\Psi \big(N 
\big[D _t - D _t ', e ^{ - r' D _t (r) ^2 } (D _t - D _t ') 
e ^{ -(1 - r') D _t (r) ^2 } \big] \big) d r' d r .
\end{align*}
Since $ (d _\rB + \partial _t d t + d ) \CS (\widetilde{D} , \widetilde {D}') = 0 $, by Equation \eqref{dCS}, it follows that
\begin{equation}
\label{Transgression1}
\partial _t \CS (D _t , D _t ') = t ^{- 1} (d _\rB + d ) T (t)
\in 
\left\{
\begin{array}{ll}
\Omega _{\ell ^2} ^\bullet (\rB \rtimes \rG ) _{\Ab} & \text{if $\dim \rZ$ odd,} \\
\widetilde \Omega _{\ell ^2} ^\bullet (\rB \rtimes \rG ) _{\Ab} & \text {if $\dim \rZ$ even.}
\end{array}
\right. 
\end{equation}

\subsection{$t \to 0$ asymptotic of the characteristic classes}
The $t \to 0$ behavior of the Chern characteristic is well known.
Define the Euler class
$$e (R ^{\rM / \rB}) :=  
\left\{
\begin{array}{ll} 
\Pf \big( \frac{R ^{\rM / \rB} }{2 \pi} \big) & \text{if $\dim \rZ$ is odd,} \\
0 & \text {if $\dim \rZ$ is even,}
\end{array}
\right. $$
where $R^{\rM / \rB}$ is the curvature of $\nabla ^{\rM / \rB}$ and $\Pf $ is the Pfaffian.
Then one has
\begin{lem}\label{l5.3}
\cite[Theorem 2]{Lott;EtaleGpoid}
As $t \to 0$,
$$ \str _\Psi \big(e ^{ - D _t (r) ^2 } \big) \to  \int _{\rZ _x} \chi e (R ^{\rM / \rB}) 
\tr \big(e ^{- (r \nabla ^{\rE} + (1 - r) (\nabla ^\rE)' +\nabla^G) ^2} \big) .$$
\end{lem}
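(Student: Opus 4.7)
The plan is to adapt the classical Getzler rescaling argument for the local index theorem, in the spirit of \cite[Chapter 10]{BGV;Book} and \cite[Theorem 2]{Lott;EtaleGpoid}, to the present non-commutative setting.

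First, starting from the Lichnerowicz formula \eqref{Lich}, I would split $D_t(r)^2$ into: (a) the fibre-wise Bismut Laplacian with its $4r(1-r)$ prefactor; (b) the mixing terms $\sqrt{t}\sum_{g\in \rG} dg\cdot c(d_\rV \chi)(g^{-1})^*$ and its $\hat c$-analogue; and (c) the purely non-commutative pieces $-\sum_{g\in \rG} dg\,\Lambda_{d_\rH \chi}(g^{-1})^* + (\nabla^\rG)^2$. Applying the classical Getzler rescaling $c^j \mapsto t^{-1/2}(\Lambda^j - t\iota_j)$ to part (a), the rescaled fibre-wise heat kernel converges along the diagonal to a Mehler-type kernel whose Berezin integral yields the Pfaffian $\Pf(R^{\rM/\rB}/2\pi) = e(R^{\rM/\rB})$, multiplied by the exponential of the twisting curvature of $\nabla^{\rE,u}$.

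Second, $\nabla^\rG$ acts trivially on the Clifford module $\wedge^\bullet \rV'$ and therefore commutes with the Getzler rescaling; combined with the action of $r \nabla^\rE + (1-r)(\nabla^\rE)'$ on coefficients, it assembles in the $t \to 0$ limit to the curvature $(r\nabla^\rE + (1-r)(\nabla^\rE)' + \nabla^\rG)^2$. The mixing terms in (b) are odd-degree Clifford elements times $\sqrt{t}$, so after Getzler rescaling they become subleading and drop out. Taking the pointwise super-trace and integrating against $\chi$ over the fibre, as prescribed by the definition \eqref{TrDfn2} of $\str_\Psi$, then yields the stated formula. By the asymptotic expansion \eqref{expansion3}, the coefficient of each $dg_{(k)}$ is determined by finitely many Duhamel terms, so convergence occurs termwise in non-commutative degree in $\Omega^\bullet_{\ell^2}(\rB \rtimes \rG)_{\Ab}$.

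The main obstacle will be the careful bookkeeping: verifying that the cross terms between $\nabla^\rG$ and $L(r)$, entering implicitly through parts (b) and (c) of \eqref{Lich}, reassemble correctly into the desired curvature square, and that the $\sqrt{t}$-corrections together with the $d_\rV\chi$- and $d_\rH\chi$-terms do not survive the limit outside of the integral against $\chi$ in \eqref{TrDfn2}. This mirrors the commutative case of \cite{BGV;Book}, with the additional complication of tracking non-commutative degrees carefully throughout the Getzler rescaling.
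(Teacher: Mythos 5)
Your plan is the same Getzler rescaling applied to the Lichnerowicz formula that the paper uses (following Lott's Proposition 22), but there is a genuine gap in your treatment of $\nabla^\rG$. You assert that $\nabla^\rG$ ``acts trivially on the Clifford module $\wedge^\bullet \rV'$ and therefore commutes with the Getzler rescaling.'' If that were the whole story, then after multiplying by $\varepsilon$ the term $(\nabla^\rG)^2$ in the rescaled Laplacian $\varepsilon(D_4(r))^2$ would acquire a surviving prefactor of $\varepsilon$ and vanish in the limit, as would the mixing terms in $d_\rV\chi$ and $d_\rH\chi$, so $\nabla^\rG$ would disappear from the answer entirely --- contradicting the limit you claim. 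The paper's rescaling additionally sends $\nabla^\rG \to \varepsilon^{-1/2}\nabla^\rG$, i.e.\ it puts the non-commutative one-form $dg$ on exactly the same footing as a horizontal one-form $E^\alpha$, and it is precisely this extra rescaling that makes $(\nabla^\rG)^2$, the $\Lambda_{d_\rH\chi}$-term, and part of the $d_\rV\chi$-mixing terms survive with $O(1)$ coefficients and assemble into $d^\rM(\nabla^\rG) + (\nabla^\rG)^2$ in the limiting operator.

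Relatedly, your claim that ``the mixing terms in (b) $\ldots$ become subleading and drop out'' is only half-correct. The $\hat{c}(d_\rV\chi)$-term does drop out, since $\hat{c}^j$ is left invariant and the leftover $\sqrt{t}$ (together with the overall $\varepsilon$) kills it. But the $c(d_\rV\chi)$-term does not: under $c^j \to \varepsilon^{-1/2}E^j - \varepsilon^{1/2}\iota^j$ together with $\nabla^\rG \to \varepsilon^{-1/2}\nabla^\rG$, the combined prefactor $\varepsilon \cdot \sqrt{t}\cdot \varepsilon^{-1/2}\cdot \varepsilon^{-1/2}$ is $O(1)$, and that term becomes exactly the vertical component of $d^\rM(\nabla^\rG)$. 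It happens to contribute nothing after the fiber integration against the top vertical form $\chi\, e(R^{\rM/\rB})$, but that is a feature of the final Berezin/fiber integral, not a Getzler ``subleading'' argument at the level of the rescaled operator. Once these two points are repaired, the remainder of your sketch --- the Mehler-type limit yielding $\Pf(R^{\rM/\rB}/2\pi)$, the twisting curvature, and the final algebraic identity $\big(r\nabla^\rE + (1-r)(\nabla^\rE)' + \nabla^\rG\big)^2 = d^\rM(\nabla^\rG) + (\nabla^\rG)^2 - r(1-r)\psi^2$ --- agrees with the paper.
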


\begin{proof}
The proof of the lemma is similar to \cite[Proposition 22]{Lott;NonCommTorsion}. 
Consider a rescaling in which $\partial_j\to \varepsilon^{-1/2}\partial_j$, $c^j\to \varepsilon^{-1/2}E^j-\varepsilon^{1/2}I^j$, 
$E^\alpha\to \varepsilon^{-1/2}E^\alpha$, $\widehat{c}^j\to \widehat{c}^j$ and $\nabla^G\to \varepsilon^{-1/2}\nabla^G$. 
One finds from (\ref{Lich}) that as $\varepsilon\to 0$, 
in adapted coordinates the rescaling of $\varepsilon (D_{4}(r))^2$ approaches 
\begin{align}
-4r(1-r)\big(\partial_j-{1\over 4}R^{\rM / \rB}_{j k} x^k \big)^2+4r(1-r)\mathcal{R} + d^M (\nabla ^\rG )+ (\nabla ^\rG )^2.
\end{align}
Using local index method as in \cite[Theorem 3.15]{Bismut;AnaTorsion}, one finds
\begin{align*}
\lim_{t\to 0}\str _\Psi \big(e ^{ - D _t (r) ^2 } \big) 
=  \int_{\rZ _x} \chi (4r(1-r))^{-n/2} & {\Pf} \Big(\frac{4r(1-r)R^{\rB / \rM}}{2 \pi} \Big) \\
&\wedge \tr \Big( e ^{-(d _\rM ( \nabla^G )+ (\nabla ^\rG )^2-r(1-r)\psi^2 )} \Big).
\end{align*}
The claim follows since
$$\big( r \nabla^E + (1-r) (\nabla^E)' + \nabla^G \big)^2
= d _M ( \nabla^G ) + (\nabla^G )^2 - r(1-r) \psi^2 . \qedhere $$
\end{proof}

Next, we turn to the $t \to 0$ limit of the Chern-Simon class. 
The computation is similar to \cite[Proposition 24]{Lott;NonCommTorsion}.
\begin{lem}
\label{ShortTime3}
One has as $t \to 0$, 
$$ \CS (D _t, D _t ')
\to \int _{\rZ _x} \chi e (R ^{\rM / \rB}) \wedge 
\int _0 ^1 \tr \big(\psi e ^{- (r \nabla ^{\rE} + (1 - r) (\nabla ^\rE)' +\nabla^G) ^2} \big) d r.$$
\end{lem}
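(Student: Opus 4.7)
The plan is to adapt the Getzler-type rescaling from the proof of Lemma~\ref{l5.3} to compute the $t \to 0$ limit of the integrand $\str _\Psi \big((D_t - D'_t)e^{-D_t(r)^2}\big)$, and then exchange the limit with the integral over $r \in [0,1]$. First I would write
\[
D_t - D'_t = t^{\frac{1}{2}}\big(d^{\nabla^\rE}_\rV - (d^{\nabla^\rE}_\rV)^*\big) - 2\Omega + t^{-\frac{1}{2}}(\iota_\varTheta + \varTheta\wedge),
\]
using $\nabla^{\rE, u} = \nabla^\rE + \psi/2$ to rewrite the first term as $t^{1/2}\big(\hat c^j \nabla^{\rE,u}_{e_j} - \tfrac{1}{2}c^j\psi_j\big)$, and decomposing $-2\Omega$ into its degree-$(1,0)$ part on $\wedge^\bullet \rV'$, which equals $-\psi$, plus degree-$(1,\pm 1)$ corrections built from $\varTheta$-contractions.

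Next I would apply the same rescaling as in the proof of Lemma~\ref{l5.3} (namely $\partial_j\to \varepsilon^{-1/2}\partial_j$, $c^j\to \varepsilon^{-1/2}E^j-\varepsilon^{1/2}I^j$, $E^\alpha\to \varepsilon^{-1/2}E^\alpha$, $\hat{c}^j\to \hat{c}^j$, $\nabla^\rG\to \varepsilon^{-1/2}\nabla^\rG$) now acting on $\varepsilon^{1/2}(D_4 - D'_4)\,\varepsilon^{\dim\rZ/2}\,e^{-\varepsilon D_4(r)^2}$. A term-by-term count of the resulting powers of $\varepsilon$ shows that all Clifford-containing pieces ($\hat c^j\nabla^{\rE,u}_{e_j}$, $c^j\psi_j$, the $\varTheta$-corrections of $-2\Omega$, and $t^{-1/2}(\iota_\varTheta + \varTheta\wedge)$) acquire strictly positive powers of $\varepsilon$ and drop out in the limit, so the only surviving piece of $\varepsilon^{1/2}(D_4 - D'_4)$ is $-\psi$. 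Combining this with the limit of $\varepsilon D_4(r)^2$ already obtained in Lemma~\ref{l5.3}, performing the Berezin integration over the Clifford variables (as in \cite[Theorem~3.15]{Bismut;AnaTorsion}), and using the identity $(r\nabla^\rE + (1-r)(\nabla^\rE)' + \nabla^\rG)^2 = d^\rM(\nabla^\rG) + (\nabla^\rG)^2 - r(1-r)\psi^2$ recalled at the end of the proof of Lemma~\ref{l5.3}, one obtains
\[
\lim_{t\to 0}\str _\Psi \big((D_t - D'_t)e^{-D_t(r)^2}\big) = -\int_{\rZ_x}\chi\, e(R^{\rM/\rB})\wedge \tr\big(\psi\, e^{-(r\nabla^\rE + (1-r)(\nabla^\rE)' + \nabla^\rG)^2}\big).
\]
Multiplying by $-1$ and integrating over $r\in[0,1]$ yields the claimed formula.

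The hard part is justifying the exchange of $\lim_{t\to 0}$ with the $r$-integral, since $D_t(r)^2$ carries a factor $r(1-r)$ that degenerates at the endpoints. The asymptotic expansion \eqref{expansion4} (and its symmetric analogue near $r=1$) shows that each non-commutative component of $(D_t - D'_t)e^{-D_t(r)^2}(x,z,z)$ is $O\big(r^{-1/2}(1-r)^{-1/2}\big)$, which is integrable on $[0,1]$; combined with the uniform bounds on the heat-kernel coefficients established in Sections~3 and~4, this provides dominated convergence in each fixed non-commutative degree. Since every fixed degree of $\CS(D_t,D'_t)$ depends only on finitely many terms of the form \eqref{KDfn}, this degree-wise convergence suffices.
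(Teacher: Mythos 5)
Your proposal takes a genuinely different route from the paper, and that route has a gap.

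The paper's proof does not rescale the product $(D_t - D'_t)\,e^{-D_t(r)^2}$ directly. Instead it introduces a Grassmann variable $z$ with $z^2 = 0$, writes
$$\str_\Psi\big((D_t - D'_t)e^{-D_t(r)^2}\big) = \str_\Psi\Big(\frac{\partial}{\partial z}\Big|_{z=0}\,\frac{1}{2r(1-r)}\,e^{-D_t(r)^2 + 2zr(1-r)(D_t - D'_t)}\Big),$$
and then applies the Getzler rescaling of Lemma \ref{l5.3} \emph{together with the extra rescaling $z \to \varepsilon^{1/2}z$}. This reduces the problem to the small-time limit of a single heat kernel (of a $z$-deformed operator), to which the Lichnerowicz formula \eqref{Lich} and the Mehler computation apply directly.

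The gap in your version is the power-counting claim that after rescaling ``all Clifford-containing pieces of $\varepsilon^{1/2}(D_4 - D'_4)$ acquire strictly positive powers of $\varepsilon$ and drop out, leaving only $-\psi$.'' That is not true. Writing $t^{1/2}(d^{\nabla^\rE}_\rV - (d^{\nabla^\rE}_\rV)^*) = t^{1/2}\big(\hat c^j\nabla^{\rE,u}_{e_j} - \tfrac12 c^j\psi_j\big)$ and setting $t = 4\varepsilon$, the Getzler substitution sends $\nabla^{\rE,u}_{e_j} \sim \partial_j \to \varepsilon^{-1/2}\partial_j$ while $\hat c^j \to \hat c^j$, so $\varepsilon^{1/2}\,\hat c^j\nabla^{\rE,u}_{e_j} \to \hat c^j\partial_j$, which is $O(1)$, not $o(1)$. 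Likewise $c^j \to \varepsilon^{-1/2}E^j - \varepsilon^{1/2}I^j$ makes the $E^j$-part of $\varepsilon^{1/2}c^j\psi_j$ also $O(1)$. So the rescaled prefactor is \emph{not} $-\psi + o(1)$; it retains first-order transport terms and Clifford terms, and one must argue separately (via the parity of the Gaussian model, and via which Clifford monomials survive $\str$) that these pieces do not contribute. That argument is nontrivial --- it is precisely what the $z\to\varepsilon^{1/2}z$ rescaling in the Grassmann formulation kills automatically: there the troublesome $O(1)$ pieces of $z(D_t - D'_t)$ are suppressed by the extra $\varepsilon^{1/2}$ attached to $z$. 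Without this device (or a substitute argument), your term-by-term count does not establish the stated limit.

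Your discussion of the $r$-integral (dominated convergence using the $O\big(r^{-1/2}(1-r)^{-1/2}\big)$ bound from \eqref{expansion4}, degree-by-degree) is in line with the paper's implicit treatment and is fine, modulo noting that the $O(r^{-1/2})$ behavior is a statement about the supertrace (after Clifford cancellations), not about the raw kernel expansion, as the paper points out when defining $\CS(D_t,D'_t)$.
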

\begin{proof}
The argument is similar to \cite[Theorem 3.16]{Bismut;AnaTorsion}.
Let $z$ be a Grassmann variable with $z ^2 = 0$ and anti-commutes with all Grassmann variables.
Then 
$$ \str _\Psi \left( (D _t - D' _t) e ^{- (D _t (r) )^2} \right)
= \str _\Psi \Big( \left.\frac{\partial }{\partial z}\right|_{z = 0} 
\frac{1}{2 r (1 - r)}e ^{- (D _t (r) )^2 + 2 z r (1 - r) (D _t - D' _t)} \Big) .$$
Rescale as in Lemma \ref{l5.3}, with $z \to \varepsilon ^{\frac{1}{2}}z $ in addition. One finds from (\ref{Lich}) that as $\varepsilon\to 0$, 
in adapted coordinates the rescaling of 
$\varepsilon((D_4(r))^2+2r(1-r)z(D_t-D_t'))$ approaches 
$$ -4r(1-r)\Big(\partial_j-{1\over 4} R^{\rM / \rB}_{jk }x^k\Big)^2
+ 4r(1-r)\mathcal{R}-2r(1-r)z\psi+ d ^\rM (\nabla^G )+ (\nabla^G )^2.
$$
Proceeding as in the proof of \cite[Theorem 3.16]{Bismut;AnaTorsion}, one obtains
\begin{align*}
\lim _{t \to 0} \CS (D _t &, D_t ') \\
=& \left.\frac{\partial }{\partial z} \right|_{z = 0} 
\int _0 ^1 \frac{1}{2 r (1 - r)} \int _{\rZ _x} \chi e (R ^{\rM / \rB}) 
\tr \Big(e ^{- ( (\nabla^G )^2 - r(1-r)\psi^2 - 2r(1-r)z \psi )} \Big) d r\\
=& \int _{\rZ _x} \chi e (R ^{\rM / \rB}) 
\int _0 ^1 \tr\big(\psi e ^{- (r \nabla ^{\rE} + (1 - r) (\nabla ^\rE)'+\nabla^G) ^2}\big) d r,
\end{align*}
which is the desired result. 
\end{proof}

As for $T (t)$, one has
\begin{lem}
\label{Main8}
(See \cite[Proposition 25]{Lott;NonCommTorsion}) As $t \to 0$,
\begin{align*}
T (t) =& O (t^{\frac{1}{2}}) & \text{if $\dim \rZ$ is odd,} \\
T (t) =& -{n\over 2}\int_{Z _x}\chi e (R ^{\rM / \rB}) 
\int_{0}^{1}\tr \big(e ^{- (r \nabla ^{\rE} + (1 - r) (\nabla ^\rE)' +\nabla^G) ^2}\big)dr + O(t) & \text {if $\dim \rZ$ is even.}
\end{align*}
\end{lem}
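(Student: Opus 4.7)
The plan is to apply the Getzler-type rescaling argument used in the proofs of Lemmas \ref{l5.3} and \ref{ShortTime3} — the non-commutative analogue of \cite[Proposition 25]{Lott;NonCommTorsion} — to each of the two summands defining $T(t)$, and then read off the leading behavior.

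First I would treat the term $-\int_0^1 \str_\Psi(N e^{-D_t(r)^2})\,dr$. Apply the rescaling $\partial_j \to \varepsilon^{-1/2}\partial_j$, $c^j \to \varepsilon^{-1/2}E^j - \varepsilon^{1/2}I^j$, $E^\alpha \to \varepsilon^{-1/2}E^\alpha$, $\hat c^j \to \hat c^j$, $\nabla^G \to \varepsilon^{-1/2}\nabla^G$, noting that $N$ itself is unaffected. The Lichnerowicz formula \eqref{Lich} shows that in adapted coordinates $\varepsilon(D_4(r))^2$ converges as $\varepsilon\to 0$ to the harmonic oscillator with twist $4r(1-r)\mathcal{R} + d^M(\nabla^G) + (\nabla^G)^2$, exactly as in Lemma \ref{l5.3}. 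Applying Mehler's formula fiber-wise and evaluating $\str_\Psi(N \,\cdot\,)$ yields a factor of $\Pf(R^{M/B}/2\pi) = e(R^{M/B})$ times the finite-dimensional trace $\tr(N e^{-(r\nabla^E + (1-r)(\nabla^E)' + \nabla^G)^2})$. In odd vertical dimension $e(R^{M/B}) = 0$, so the leading term vanishes and the subleading Taylor term contributes $O(t^{1/2})$; in even vertical dimension, the identity $\str_{\wedge^\bullet V^*}(N \,\cdot\,) = -\tfrac{n}{2}\tr(\,\cdot\,)$ produces the constant $-n/2$.

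For the commutator term I would first use the cyclicity of $\str_\Psi$ to rewrite
\[
\str_\Psi\bigl(N\bigl[D_t - D_t',\, e^{-r'D_t(r)^2}(D_t - D_t')e^{-(1-r')D_t(r)^2}\bigr]\bigr)
= \str_\Psi\bigl([N, D_t - D_t']\,e^{-r'D_t(r)^2}(D_t - D_t')e^{-(1-r')D_t(r)^2}\bigr),
\]
and then observe that $[N, D_t - D_t'] = t^{1/2}(d^{\nabla^E}_V + (d^{\nabla^E}_V)^*) + t^{-1/2}(-\iota_\Theta + \Theta\wedge)$ is a bounded fiber-wise combination of the elementary pieces of the super-connection. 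The same Getzler rescaling reduces the integrand to a Mehler-kernel computation against a finite-dimensional trace; integration in $r'$ and $r$ then either absorbs the weight $r(1-r)$ into the even-dimensional contribution (which combines with the first summand to give the claimed coefficient) or gives $O(t^{1/2})$ in odd dimension.

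The main obstacle I anticipate is tracking the non-flat pieces of \eqref{Lich} — the terms $\sum_g dg\,c(d_V\chi)(g^{-1})^*$, $(1/2 - r)\sum_g dg\,\hat c(d_V\chi)(g^{-1})^*$ and $\sum_g dg\,\Lambda_{d_H\chi}(g^{-1})^*$ — through the rescaling. Under the substitution $c^j \to \varepsilon^{-1/2}E^j$ the first of these a priori diverges as $\varepsilon^{-1/2}$, so its contribution after taking $\str_\Psi$ and summing over $g$ must cancel. I expect this to follow, as in the proof of Lemma \ref{dTr}, from the partition-of-unity identity $\sum_g g^*\chi = 1$ together with the circularity relation \eqref{Circular} which holds modulo $[\Omega^\bullet_c(B\rtimes G), \Omega^\bullet_c(B\rtimes G)]$. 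Once these vanishing identities are verified, the remaining calculation is the direct non-commutative transcription of \cite[Proposition 25]{Lott;NonCommTorsion}.
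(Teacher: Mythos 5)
Your proposal takes a genuinely different route from the paper's, and it contains a real gap. The paper (following Bismut--Lott Theorem~3.21 and, despite your claim, Lott's Proposition~25 itself) proves the lemma not by rescaling the two summands of $T(t)$ directly, but by introducing the dilated family $\widehat{\rM}=\rM\times\bbR^{+}\to\widehat{B}=\rB\times\bbR^{+}$ with fiber metric $s^{-1}g^{\rV}$, computing $\widehat{D}_t$, $\widehat{D}'_t$, and showing that
$$\widehat{D}_t(r)=s^{-N/2}D_{st}(r)s^{N/2}+ds\,\partial_s+(1-r)\,ds\,\frac{1}{s}\Bigl(N-\frac{n}{2}\Bigr).$$
Extracting the $ds$-component of the dilated Chern--Simons form via Duhamel's formula then yields, in one stroke,
$$T(t)=-\frac{n}{2}\int_0^1\str_\Psi\bigl(e^{-D_t(r)^2}\bigr)\,dr + O(t)\quad(\dim\rZ\text{ even}),\qquad T(t)=O(t^{1/2})\quad(\dim\rZ\text{ odd}),$$
after which Lemma~\ref{l5.3} finishes the job. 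The dilation trick deliberately bypasses the two issues your proposal runs into.

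The concrete gap: your identity ``$\str_{\wedge^\bullet V^*}(N\,\cdot\,)=-\tfrac{n}{2}\tr(\,\cdot\,)$'' is false as stated, and the minus sign is in the wrong place. Since $N=\tfrac{n}{2}+\tfrac{1}{2}\sum_j c^j\hat c^j$, the operator $N$ does not rescale neatly under $c^j\to\varepsilon^{-1/2}E^j-\varepsilon^{1/2}I^j$; one must argue separately that the $\tfrac{1}{2}\sum_j c^j\hat c^j$ piece contributes nothing in the Getzler limit (because it over-saturates, or under-saturates, the $E$-degree in the Mehler super-trace), leaving a contribution of $+\tfrac{n}{2}$, with the overall minus then coming from the sign in the definition of $T(t)$. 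More seriously, your treatment of the commutator term --- ``integration in $r'$ and $r$ then either absorbs the weight $r(1-r)$ into the even-dimensional contribution\ldots or gives $O(t^{1/2})$'' --- is not an argument. That term contains a doubly iterated heat-kernel expression, and its short-time behavior is precisely what makes the direct approach delicate; the paper's conjugation identity $\widehat{D}_t(r)=s^{-N/2}D_{st}(r)s^{N/2}+\cdots$ is the device that packages both summands of $T(t)$ as a single, controllable $ds$-component. Without that device (or a careful substitute), your proposal does not close. Your discussion of the non-flat $\chi$-dependent terms in \eqref{Lich} and the cancellation via \eqref{Circular} is reasonable and parallels what is needed here, but it does not rescue the core difficulty with the $N$ operator and the commutator term.
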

\begin{proof}
Let $\widehat{\rM}= \rM \times \bbR^+$ and $\widehat{B}=B\times \mathbb{R}^+$. 
Define $\widehat{\pi}:\widehat{M}\to \widehat{B}$ by $\widehat{\pi}(p,s) := (\pi(p),s)$. 
Let $\widehat{Z}$ be the fiber of $\widehat{\pi}$. Let $g^{\widehat{\rV}}$ be the metric on $\ker (d \widehat{\pi}) $, 
which restricts to $s^{-1}g^{\rV}$ on $M \times \{s\}$. 
Using the method of proof of \cite[Theorem 3.21]{Bismut;AnaTorsion}, one has
\begin{align*}
\widehat{D}_t =& \sqrt{t}d^ \rV +L^{E^{\bullet}_{\flat}}+{1\over{\sqrt{t}}} \iota_{\Theta}+ d s \partial _s+\nabla^G
= s^{-N/2}D_{st}s^{N/2}+ds\partial_s, \\
\widehat{D}'_{t}
=& s\sqrt{t} (d^\rV)^* + (L^{E^{\bullet}_{\flat}})'
-{1\over{s\sqrt{t}}} \varTheta \wedge +ds \Big(\partial_s + {1\over s}\Big( N-{n\over 2} \Big) \Big)+\nabla^G\\
=& s^{-N/2}D'_{st}s^{N/2}+ds \Big(\partial_s+{1\over s} \Big(N-{n\over 2} \Big) \Big).
\end{align*}
Then we compute
\begin{align*}
\widehat{D}_t(r) =& r\widehat{D}_t+(1-r)\widehat{D}'_t\\
=& rs^{-N/2}D_{st}s^{N/2}+(1-r)s^{-N/2}D'_{st}s^{N/2}+ds \partial_s +(1-r)ds{1\over s}\left(N-{n\over 2}\right)\\
=& s^{-N/2}D_{st}(r)s^{N/2}+ds \partial_s +(1-r)ds{1\over s}\left(N-{n\over 2}\right).
\end{align*}
Using Duhamel's formula, one gets a formula similar to \cite[(6.45)]{Lott;NonCommTorsion} (cf. \cite[Proposition 9]{Lott;NonCommTorsion})
and finds that
$$ T (t) = \left\{
\begin{array}{ll}
-{n\over 2}\int_{0}^{1} \str _\Psi \big(e^{-(D_{t}(r))^2} \big)d r + O(t) & \text{ if $\dim \rZ$ is even,} \\
O (t^{\frac{1}{2}}) & \text{ if $\dim \rZ$ is odd.}
\end{array}
\right.
$$
By Lemma \ref{l5.3}, we have
\begin{equation}
\lim_{t\to 0}\int_{0}^{1} \str \big(e^{-(D_{t}(r))^2} \big) d r
=\int_{Z _x}\chi e(R ^{\rM / \rB})\int _0 ^1 \tr\left(e ^{- (r \nabla ^{\rE} + (1 - r) (\nabla ^\rE)' +\nabla^G) ^2}\right) dr.
\end{equation}
Hence the lemma.
\end{proof}

\subsection{A non-commutative Riemann-Roch-Grothendieck index theorem}
One obtains a Riemann-Roch-Grothendieck index theorem by integrating Equation \eqref{Transgression1} 
from $t = 0 $ to $t = \infty$.
We begin with computing the limit of $T (t)$ as $t \to \infty$.
\begin{lem}
\label{Main7}
As $t \to \infty $,
$$ T (t) = -\int_{0}^{1}\str _{\Psi} \big(N e ^{ -(\nabla ^{\Ker (\varDelta )} (r))^2}\big)dr 
+ \dot O (t ^{- \{\varepsilon _n ' \} }).$$
\end{lem}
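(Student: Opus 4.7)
The plan is to split $T(t) = T_1(t) + T_2(t)$, where
\begin{align*}
T_1(t) &:= -\int_0^1 \str_\Psi(N e^{-D_t(r)^2})\, dr, \\
T_2(t) &:= -\int_0^1 r(1-r) \int_0^1 \str_\Psi\big(N \big[D_t - D_t',\, e^{-r'D_t(r)^2}(D_t-D_t')e^{-(1-r')D_t(r)^2}\big]\big)\, dr'\, dr,
\end{align*}
and to treat each integral separately. The desired limit should come entirely from $T_1(t)$.

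For $T_1(t)$, since $N$ is a smooth, bounded, $\rG$-invariant, fiber-wise operator of order zero, Theorem \ref{Main4} allows us to insert $N$ into the proof of Theorem \ref{TrDecay0}, giving
$$\big\|\str_\Psi(N e^{-D_t(r)^2}) - \str_\Psi(Ne^{-(\nabla^{\Ker(\varDelta)}(r))^2})\big\|_{C^m} = \dot O((r(1-r)t)^{-\{\varepsilon_n\}}).$$
Shrinking $\varepsilon_n$ if necessary so that $\varepsilon_n < 1$, the function $(r(1-r))^{-\varepsilon_n}$ is integrable on $[0,1]$, and integrating in $r$ yields $T_1(t) = -\int_0^1 \str_\Psi(Ne^{-(\nabla^{\Ker(\varDelta)}(r))^2})\,dr + \dot O(t^{-\{\varepsilon_n'\}})$ for a suitable $\varepsilon_n' \leq \varepsilon_n$.

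For $T_2(t)$, the goal is to show it converges to zero at rate $\dot O(t^{-\{\varepsilon_n'\}})$. Using super-trace cyclicity (noting $N$ is even while $D_t - D_t'$ is odd), the integrand rewrites as $\str_\Psi([N,D_t-D_t']\cdot e^{-r'D_t(r)^2}(D_t-D_t')e^{-(1-r')D_t(r)^2})$. Direct computation gives $[N, D_t-D_t'] = t^{1/2}(d^{\nabla^\rE}_\rV + (d^{\nabla^\rE}_\rV)^*) + t^{-1/2}(\Theta\wedge - \iota_\Theta)$, which fits into the framework of Lemma \ref{RedLem}. An analog of Lemmas \ref{Main6} and \ref{Decay3}, established by splitting $e^{-r' D_t(r)^2}$ and $e^{-(1-r')D_t(r)^2}$ through Duhamel's formula \eqref{HeatDfn} and grouping terms via the scheme of Lemmas \ref{Schick4.6}--\ref{Schick4.7} uniformly in $r' \in [0,1]$, shows that the middle operator converges in $\|\cdot\|_{\HS m}$ to an expression sandwiched between $\Pi_0$ factors, modulo $\dot O((r(1-r)t)^{-\{\varepsilon_n\}})$. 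Since $(d^{\nabla^\rE}_\rV + (d^{\nabla^\rE}_\rV)^*)\Pi_0 = 0$, the leading $t^{1/2}$-contribution of $[N, D_t - D_t']$ is killed by the $\Pi_0$-projection, and the remaining bounded pieces commute with the limit operator, giving a super-trace of a super-commutator, which vanishes. The a priori degeneration of the rate at $r \in \{0,1\}$ is absorbed by the prefactor $r(1-r)$, so that after integrating in $r'$ and $r$ we obtain $T_2(t) = \dot O(t^{-\{\varepsilon_n'\}})$.

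The main obstacle will be the second step: one must carefully track the super-trace cyclicity signs for the odd operator $D_t - D_t'$ to justify the reduction to $\str_\Psi([N,D_t-D_t']\cdot X)$, then verify that the $\Pi_0$-projected limit indeed vanishes (rather than contributing a nonzero boundary term), and finally propagate the $\|\cdot\|_{\HS m}$-estimates uniformly in $r'$ while controlling the integrability near $r = 0, 1$ via the $r(1-r)$ factor.
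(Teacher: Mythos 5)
Your $T_1$ argument is close to the paper's but handles the endpoints differently: you invoke integrability of $(r(1-r))^{-\varepsilon_n}$ over $[0,1]$, whereas the paper splits the domain into $[0,t^{-1/2}]\cup[t^{-1/2},1-t^{-1/2}]\cup[1-t^{-1/2},1]$, proves uniform boundedness of $\str_\Psi(Ne^{-D_t(r)^2})$ near $r=0,1$ via the asymptotic expansion \eqref{Expansion1}, and only applies Theorem~\ref{TrDecay0} on the middle interval where $r(1-r)t \geq t^{1/2}$. The issue with your route is that the decay estimates (in particular Lemma~\ref{Main3} and those leading to Theorem~\ref{TrDecay0}) are established under the condition $r(1-r)t > \bar t$, so they do not apply uniformly near $r=0,1$ and one cannot simply integrate the bound; the boundedness-near-endpoints observation is what closes this gap.

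Your $T_2$ argument takes a genuinely different route, but it has a real gap. After the (correct) cyclicity reduction to $\str_\Psi\bigl([N,D_t-D_t']\,X\bigr)$ with $[N,D_t-D_t'] = t^{1/2}(d^{\nabla^\rE}_\rV + (d^{\nabla^\rE}_\rV)^*) + t^{-1/2}(\Theta\wedge-\iota_\Theta)$, you assert that the limit of $X$ is ``sandwiched between $\Pi_0$ factors,'' so that $(d+d^*)\Pi_0=0$ kills the $t^{1/2}$ growth. This is not what Lemmas~\ref{Main6} and~\ref{Decay3} give: the limit operator also contains the pieces $\bigl(\tfrac{\varPi_d}{r}-\tfrac{\varPi_{d^*}}{1-r}\bigr)\star\nabla^\rG$ and $\nabla^\rG\star\bigl(\tfrac{\varPi_{d^*}}{r}-\tfrac{\varPi_d}{1-r}\bigr)$, which are not left- or right-annihilated by $d+d^*$. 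Moreover, even on the $\Pi_0$-sandwiched part, the convergence to the limit is only $O(t^{-\varepsilon_n})$ with no guarantee that $\varepsilon_n > \tfrac{1}{2}$, so $t^{1/2}\cdot O(t^{-\varepsilon_n})$ need not vanish. The paper avoids both difficulties by \emph{not} commuting $N$ past $D_t-D_t'$: instead it replaces the bracket $[D_t-D_t',X]$ by its limit $[Y,Z]$ (with $Z=e^{-(\nabla^{\Ker(\varDelta)})^2}$), observes that every term in $Y$ and $Z$ — including $\Omega$, $\Pi_d$, $\Pi_{d^*}$, $\nabla^\rG$ — preserves the $\wedge^\bullet\rV'$-grading and hence commutes with $N$, so $N[Y,Z]=[Y,NZ]$ is still a commutator, and its super-trace vanishes by the trace property. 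That observation is the key ingredient your proof is missing, and it is what makes the $t^{1/2}$ problem disappear before it ever arises.
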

\begin{proof}
First consider the first term of $T (t)$, i.e.
$\int _0 ^1 \str _\Psi ( N e ^{- D _t (r) ^2 } ) d r$.
We split the domain on integration in to 
$0 \leq r \leq t ^{- \frac{1}{2}}, t ^{- \frac{1}{2}} \leq r \leq 1 - t ^{- \frac{1}{2}},
1 - t ^{- \frac{1}{2}} \leq r \leq 1$ (for sufficiently large $t$).
It clearly follows from the asymptotic expansion \eqref{Expansion1} that 
$\str _\Psi ( N e ^{- D _t (r) ^2 } )$ is uniformly bounded as $r \to 0$ and $r \to 1$,
therefore
$$ \int _0 ^{t ^{- \frac{1}{2}}} \str _\Psi ( N e ^{- D _t (r) ^2 } ) d r = O (t ^{-\frac{1}{2}}),$$
and similar for the third integral.

By the first estimate of Theorem \ref{TrDecay0} and since $N $ is bounded, one directly gets
$$ \big\| \str _{\Psi} (N e^{- \eth _{t} (r) ^2} - N e ^{ -(\nabla ^{\Ker (\varDelta )} (r))^2}) \big\| _{C ^m}
= \dot O ((r (1 - r) t ) ^{- \{ \varepsilon _n \} }).$$
Since by construction $r (1 - r) t \geq {t ^{\frac{1}{2}}}$, 
it follows that
$$\int _0 ^1 \str _\Psi ( N e ^{- D _t (r) ^2 } ) d r
= \int_{0}^{1}\str _{\Psi} (N e ^{ -(\nabla ^{\Ker (\varDelta )} (r))^2})dr  + \dot O (t ^{- \{ \varepsilon _n / 2 \}}).$$

We turn to the second term of $T (t)$.
Again, we split the domain of integration into $S := t ^{- \frac{1}{2}} \leq r , r' \leq 1 - t ^{- \frac{1}{2}}$
and $[0, 1] \times [0, 1] \setminus S$.
The volume of $[0, 1] \times [0, 1] \setminus S$ is $O (t ^{- \frac{1}{2}})$, 
hence also the integral over $[0, 1] \times [0, 1] \setminus S$.

On $S$, by Lemmas \ref{Main3} and \ref{Decay3}, 
\begin{align*}
\big[D_t &- D _t ', e ^{ - r' D _t (r) ^2 } (D _t - D _t ') 
e ^{ -(1 - r') D _t (r) ^2 } \big] \\
=& \Big[\big(2 \varPi _0 \Omega + \big(\frac {\varPi _d}{r} - \frac{\varPi _{d ^*}}{1 - r} \big) 
\star \nabla ^\rG \big) 
\star e ^{ -(\nabla ^{\Ker (\varDelta )} )^2} 
\star \big(2 \Omega \varPi _0 + \nabla ^\rG \star \big(\frac{\varPi _{d ^*}}{r} - \frac{\varPi _d}{1 - r} \big) \big), \\
& e ^{ -(\nabla ^{\Ker (\varDelta )} )^2} \Big]
+ O (t ^{- \varepsilon'}),
\end{align*}
in all $\| \cdot \| _{\HS m}$ norms.
Observe that all terms in the bracket preserve the grading in $\wedge ^\bullet \rV'$,
therefore they commute with the grading operator $N$. 
It follows that 
\begin{align*}
N \Big[ & \big(2 \varPi _0 \Omega + \big(\frac {\varPi _d}{r} - \frac{\varPi _{d ^*}}{1 - r} \big) 
\star \nabla ^\rG \big) 
\star e ^{ -(\nabla ^{\Ker (\varDelta )} )^2} 
\star \big(2 \Omega \varPi _0 + \nabla ^\rG \star \big(\frac{\varPi _{d ^*}}{r} - \frac{\varPi _d}{1 - r} \big) \big), \\
& e ^{ -(\nabla ^{\Ker (\varDelta )} )^2} \Big] \\
= & \Big[\big(2 \varPi _0 \Omega + \big(\frac {\varPi _d}{r} - \frac{\varPi _{d ^*}}{1 - r} \big) 
\star \nabla ^\rG \big) 
\star e ^{ -(\nabla ^{\Ker (\varDelta )} )^2} 
\star \big(2 \Omega \varPi _0 + \nabla ^\rG \star \big(\frac{\varPi _{d ^*}}{r} - \frac{\varPi _d}{1 - r} \big) \big), \\
& N e ^{ -(\nabla ^{\Ker (\varDelta )} )^2} \Big].
\end{align*}
By the same arguments as Theorem \ref{TrDecay0}, the $\str _\Psi $ of the above bracket vanishes.

As for the remainder, by the same arguments as Theorem \ref{TrDecay0} one sees that
its trace is also $\dot O (t ^{- \{\varepsilon _n \}})$ in the $C ^m$ norm.
\end{proof}

\begin{dfn}
\label{TorsionDfn}
The analytic torsion form is defined to be
$$
T := \int _0 ^\infty \big( T (t) + T _\infty - (T _0 + T _\infty) (1 - \frac {t}{2}) e ^{- \frac{t}{4}} \big)
\frac {d t}{t} 
\in \left\{
\begin{array}{ll}
\Omega _{\ell ^2} ^\bullet (\rB \rtimes \rG ) _{\Ab} & \text{if $\dim \rZ$ is odd,} \\
\widetilde \Omega _{\ell ^2} ^\bullet (\rB \rtimes \rG ) _{\Ab} & \text {if $\dim \rZ$ is even,}
\end{array}
\right.
$$
where 
\begin{align*}
T _0 :=& -{n\over 2} \int_{\rZ _x} \chi e (R ^{\rM / \rB})
\int_{0}^{1} \tr \big(e ^{- (r \nabla ^{\rE} + (1 - r) (\nabla ^\rE)' +\nabla^G) ^2}\big) d r \\
T _\infty :=& \int_0^1 \str _{\Psi} \big( N e ^{ -(\nabla ^{\Ker (\varDelta )} (r))^2} \big) d r .
\end{align*}
The integral converges and is smooth by Lemmas \ref{Main6} and \ref{Main7}.
\end{dfn}

Integrating Equation \eqref{Transgression1} from $t = 0$ to $\infty$, 
and using Lemma \ref{ShortTime3} and the second equation of Theorem \ref{TrDecay0} to evaluate the limits for 
$\CS (D _t , D' _t)$, one gets:
\begin{thm}
\label{Transgression2}
One has the transgression formula
\begin{align*}
\int _{\rZ _x} \chi e( \nabla^{\rM / \rB}) 
\int _0 ^1 \tr\big(\psi e ^{- (r \nabla ^{\rE} + (1 - r) (\nabla ^\rE)' +\nabla^G) ^2} \big)d t
-& \CS ^{\Ker (\varDelta)} (L ^{\rE _\flat ^\bullet }, (L ^{\rE _\flat ^\bullet })') \\
=& (d + d _\rB) T .
\end{align*}
\end{thm}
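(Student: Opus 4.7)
The strategy is to integrate the transgression formula \eqref{Transgression1} from $t = 0$ to $t = \infty$, identifying the boundary contributions via Lemma \ref{ShortTime3} and the second equation of Theorem \ref{TrDecay0}.

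A key preliminary step is to establish the closedness
\[
(d _\rB + d) T _0 = (d _\rB + d) T _\infty = 0.
\]
For $T _\infty$, apply $(d _\rB + d)$ to both sides of \eqref{Transgression1} and pass to the limit $t \to \infty$. By Lemma \ref{Main7} the left-hand side $(d _\rB + d) T (t)$ converges to $- (d _\rB + d) T _\infty$. For the right-hand side $t \partial _t \CS (D _t, D' _t)$, Theorem \ref{TrDecay0} gives $\CS (D _t, D' _t) - \CS ^{\Ker (\varDelta)} = \dot O (t ^{- \{\varepsilon _n \}})$; a derivative-level refinement of the Duhamel arguments in Section 4 (applied to $\partial _t$ of the integrand of \eqref{HeatDfn}) yields $t \partial _t \CS (D _t, D' _t) \to 0$, so $(d _\rB + d) T _\infty = 0$. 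The vanishing $(d _\rB + d) T _0 = 0$ follows analogously from Lemma \ref{Main8} together with the local-index argument of Lemma \ref{ShortTime3} applied to $\partial _t \CS (D _t, D' _t)$ as $t \to 0$.

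With these at hand, I would next justify pushing $(d _\rB + d)$ inside the integral defining $T$. Setting $S (t) := T (t) + T _\infty - (T _0 + T _\infty)(1 - t/2) e ^{- t/4}$ so that $T = \int _0 ^\infty S (t) \frac{d t}{t}$, the decay estimates of Lemmas \ref{Main7} and \ref{Main8}, together with their derivative analogues, ensure that both $S (t) / t$ and $(d _\rB + d) S (t) / t$ are integrable over $(0, \infty)$ and that $(d _\rB + d) \int = \int (d _\rB + d)$. Combining with the closedness of $T _0$ and $T _\infty$ above, this gives
\[
(d _\rB + d) T = \int _0 ^\infty t ^{-1} (d _\rB + d) T (t) \, d t = \int _0 ^\infty \partial _t \CS (D _t, D' _t) \, d t
\]
by the transgression formula \eqref{Transgression1}.

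The right-hand side telescopes to $\lim _{t \to \infty} \CS (D _t, D' _t) - \lim _{t \to 0} \CS (D _t, D' _t)$. Lemma \ref{ShortTime3} identifies the short-time limit as $\int _{\rZ _x} \chi\, e (R ^{\rM / \rB}) \int _0 ^1 \tr \big( \psi\, e ^{- (r \nabla ^\rE + (1 - r)(\nabla ^\rE)' + \nabla ^\rG) ^2} \big) d r$, while the second equation of Theorem \ref{TrDecay0}, combined with the definition of the Chern--Simons form on $\Ker (\varDelta)$, identifies the long-time limit as $\CS ^{\Ker (\varDelta)} (L ^{\rE ^\bullet _\flat}, (L ^{\rE ^\bullet _\flat})')$. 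Rearranging produces the stated transgression formula. The main obstacle is the verification of $(d _\rB + d) T _\infty = 0$ and the attendant derivative-level control of $\partial _t \CS (D _t, D' _t)$ at both endpoints; this requires extending the Duhamel expansions of Section 4 to cover $t$-derivatives of $e ^{- D _t (r) ^2}$ and $(D _t - D' _t) e ^{- D _t (r) ^2}$, exploiting the decomposition of Lemma \ref{RedLem} which expresses the relevant brackets as products of bounded fibre-wise operators.
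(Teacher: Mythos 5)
Your overall strategy---integrate the transgression formula \eqref{Transgression1} from $t=0$ to $t=\infty$, evaluate the endpoint limits of $\CS(D_t,D'_t)$ via Lemma \ref{ShortTime3} and Theorem \ref{TrDecay0}, and subtract the renormalizing counter-terms---matches the paper. However, the step you yourself flag as ``the main obstacle,'' namely $(d_\rB+d)T_\infty=0$ and $(d_\rB+d)T_0=0$, is where your proposal has a genuine gap, and it is exactly where the paper does something different and cleaner.

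For $T_\infty$, you propose to pass $(d_\rB+d)$ through \eqref{Transgression1} and send $t\to\infty$, which requires showing $t\,\partial_t\CS(D_t,D'_t)\to 0$. But the large-$t$ decay estimate of Theorem \ref{TrDecay0} controls $\CS(D_t,D'_t)-\CS^{\Ker(\varDelta)}$ in $C^m$-norm, not its $t$-derivative; a $t^{-\varepsilon}$ bound on a function does not imply decay of $t$ times its derivative. The ``derivative-level refinement of the Duhamel arguments'' you invoke is not carried out anywhere in the paper and would be a substantial additional technical burden. The paper instead computes $(d_\rB+d)T_\infty$ directly via the trace identity of Lemma \ref{dTr}, obtaining $\int_0^1\str_\Psi\big([\nabla^{\Ker(\varDelta)}(r),\,Ne^{-(\nabla^{\Ker(\varDelta)}(r))^2}]\big)dr$, and then observes that $\nabla^{\Ker(\varDelta)}(r)$ preserves the vertical-form grading (because $L^{\rE^\bullet_\flat}$, $(L^{\rE^\bullet_\flat})'$, and $\nabla^\rG$ all do), hence commutes with $N$, so the integrand vanishes identically. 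This is a purely algebraic argument that sidesteps the $t$-derivative issue entirely. Similarly, for $T_0$ the paper does not pass $(d_\rB+d)$ through a $t\to 0$ limit of \eqref{Transgression1}; it simply notes via Lemma \ref{Main8} that $T_0$ is the $t\to 0$ limit of the family of already-closed Chern-character forms $-\frac{n}{2}\int_0^1\str_\Psi(e^{-(D_t(r))^2})dr$. You should replace your derivative-based arguments for the closedness of $T_0$ and $T_\infty$ with these two observations; without them, your proof is incomplete at precisely the point you identified as delicate.
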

\begin{proof}
It remains to prove 
\begin{align*}
(d _\rB + d ) T _\infty =& 0 \\
(d _\rB  +d) T _0 =& 0 
\in \widetilde \Omega _{\ell ^2} ^\bullet (\rB \rtimes \rG ) _{\Ab} \text{ if $\dim \rZ$ is even.}
\end{align*}
For the first equality, we use Lemma \ref{dTr} and consider 
$$ (d _\rB + d ) T _\infty 
= \int_0^1 \str _{\Psi} \big( \big[\nabla ^{\Ker (\varDelta )} (r), N e ^{ -(\nabla ^{\Ker (\varDelta )} (r))^2} \big] \big) d r ,$$
where 
$ \nabla ^{\Ker (\varDelta )} (r) 
= \varPi _0 \big(r L ^{\rE ^\bullet _\flat } + (1 - r) \big(L ^{\rE ^\bullet _\flat }\big)' + \nabla ^\rG \big) \varPi _0 $,
as in \eqref{KerConn}.
Because $L ^{\rE ^\bullet _\flat }$ is the degree $(1, 0)$ component of $D _\rB$, 
it follows that $\nabla ^{\Ker (\varDelta )} (r) $ preserves the grading of 
$\Ker (\varDelta) = \oplus (\wedge ^\bullet \rV' \otimes \rE) \cap \Ker (\varDelta)$,
and hence commutes with $N$.
Therefore
$$\big[\nabla ^{\Ker (\varDelta )} (r), N e ^{ -(\nabla ^{\Ker (\varDelta )} (r))^2} \big] = 0 .$$
As for the second equality, 
observe that by Lemma \ref{Main8}, 
$T _0$ is the $t \to 0$ limit of the family of closed forms
$ -{n\over 2}\int_{0}^{1} \str _\Psi \big(e^{-(D_{t}(r))^2} \big)d r $.
\end{proof}

\begin{rem}
In \cite{Lott;NonCommTorsion} it was furthermore proven that both $T _\infty $ and $T _0$ are exact in  
$\widetilde \Omega _{\ell ^2} ^\bullet (\rB \rtimes \rG ) _{\Ab}$. 
\end{rem}

A non-commutative Riemann-Roch-Grothendieck index theorem immediately follows from Theorem \ref{Transgression2}, 
which can be stated as:
\begin{cor}
Suppose $\dim \rZ$ is even.
One has the equality
$$ \CS ^{\Ker (\varDelta)} (L ^{\rE _\flat ^\bullet }, (L ^{\rE _\flat ^\bullet })')
= \int _{\rZ _x} \chi e( \nabla^{\rM / \rB}) 
\int _0 ^1 \tr\big(\psi e ^{- (r \nabla ^{\rE} + (1 - r) (\nabla ^\rE)' +\nabla^G) ^2} \big)d t $$
in $\mathbf H^\bullet (\widetilde \Omega _{\ell ^2} ^\bullet (\rB \rtimes \rG) _{\Ab}) $.
\end{cor}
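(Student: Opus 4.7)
The plan is to read off the corollary directly from the transgression formula of Theorem \ref{Transgression2}. That theorem exhibits the difference of the two forms appearing in the statement as $(d_\rB + d) T$, and by Definition \ref{TorsionDfn} the torsion form $T$ lies in $\widetilde \Omega _{\ell ^2} ^\bullet (\rB \rtimes \rG ) _{\Ab}$ whenever $\dim \rZ$ is even. The difference is therefore a coboundary in that complex, and its class in $\mathbf H^\bullet (\widetilde \Omega _{\ell ^2} ^\bullet (\rB \rtimes \rG) _{\Ab})$ vanishes, yielding the stated equality of classes.

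For the statement to make sense at the level of cohomology I would also verify that both sides are closed. The first term is identified in Lemma \ref{ShortTime3} as the $t \to 0$ limit of $\CS(D _t , D_t ')$, which is closed for every $t>0$ by \eqref{dCS}; passing to the $t\to 0$ limit preserves closedness. The Chern-Simon form $\CS^{\Ker(\varDelta)}$ is closed by a standard transgression argument on the finite-dimensional bundle $\Ker(\varDelta)$: Lemma \ref{dTr} reduces its differential to a super-trace of a graded commutator involving $\nabla^{\Ker(\varDelta)}(r)$, and the two flat connections $\varPi_0 L^{\rE^\bullet_\flat} \varPi_0$ and $\varPi_0 (L^{\rE^\bullet_\flat})' \varPi_0$ kill the resulting expression (alternatively, closedness of one side together with the transgression formula forces closedness of the other).

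There is no genuine obstacle internal to this corollary: the hard work is entirely upstream, in (a) establishing the transgression formula of Theorem \ref{Transgression2}, which rests on the short-time limits of Lemmas \ref{l5.3}--\ref{Main8} and the large-time analysis of Lemma \ref{Main7}, and (b) verifying that $T$ converges and actually lands in the quotient $\widetilde \Omega _{\ell ^2} ^\bullet (\rB \rtimes \rG ) _{\Ab}$ in the even-dimensional case, which is built into Definition \ref{TorsionDfn}. Granted these, the corollary is a one-line cohomological consequence: the transgression formula realizes the difference of characteristic classes as the image of $T$ under $d_\rB + d$, and this vanishes in $\mathbf H^\bullet (\widetilde \Omega _{\ell ^2} ^\bullet (\rB \rtimes \rG) _{\Ab})$.
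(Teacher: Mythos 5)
Your proposal is correct and matches the paper's approach: the paper states the corollary as an immediate consequence of Theorem \ref{Transgression2}, reading it off as "the difference is $(d_\rB + d)T$, hence vanishes in cohomology." The extra verification of closedness you supply is a reasonable sanity check but is not spelled out in the paper, which treats it as implicit.
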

Note that $\CS ^{\Ker (\varDelta)} (L ^{\rE _\flat ^\bullet }, (L ^{\rE _\flat ^\bullet })')$ 
is just the Chern-Simon form on the (flat) bundle $\Ker (\varDelta)$.

\begin{rem}
If on the other hand,
$\dim \rZ $ is odd and $(\rE ^\bullet , d ^{\nabla ^\rE})$ is acyclic (i.e. $\varPi _0 = 0$), 
then $(d + d _\rB) T = 0$ and $T $ defines a class in $\mathbf H^\bullet (\Omega _{\ell ^2} ^\bullet (\rB \rtimes \rG) _{\Ab}) $.
Using the arguments in \cite[Theorem 3.24]{Bismut;AnaTorsion}, 
it can be shown that the class of $T$ does not depend on the choice of $\rG$-invariant Riemannian metric $g ^\rM$.
Also note that $T \in \Omega _{\ell ^2} ^\bullet (\rB \rtimes \rG) _{\Ab} $ is non-trivial even if $\rB$ is a point.
\end{rem}

\section{Concluding remarks}
In this paper, we generalized the Bismut-Lott analytic torsion form (Definition \ref{TorsionDfn})
to the non-commutative transformation groupoid convolution algebra,
following the local index theory formalism established in \cite{Lott;EtaleGpoid};
we showed that this torsion form satisfies a transgression formula (Theorem \ref{Transgression2})
-- as expected for a torsion form.
It should be straightforward, but still interesting, to generalize our torsion form to general Etale groupoids
and holonomy groupiods (i.e. foliations),
and compare with \cite{Heitsch;FoliationTorsion}.



\end{document}